\documentclass[14pt, dvipsnames]{extarticle}
\usepackage[left=2cm, top=2cm, right=2cm, bottom=20mm]{geometry} 

\usepackage[T1]{fontenc}
\usepackage[cmintegrals,cmbraces]{newtxmath}
\usepackage[lining]{ebgaramond}
\usepackage{ebgaramond-maths}
\usepackage{euscript}
\usepackage{xcolor}
\usepackage{graphicx}
\usepackage[figurename=Fig.]{caption}

\usepackage[tracking = true, letterspace = 50]{microtype}
\SetTracking{encoding=T1, shape=sc}{50}
\SetTracking{encoding=T1}{50}

\usepackage{amsthm} 
\usepackage{hyperref} 
\usepackage[hyphenbreaks]{breakurl}

\theoremstyle{plain}
\newtheorem{theorem}{Theorem}
\newtheorem{coroll_theorem}{Corollary}[theorem]

\newtheorem{prop}{Proposition}

\newtheorem*{remark*}{Remark}

\theoremstyle{definition}
\newtheorem{defi}{Definition}

\newtheorem{example}{Example}

\theoremstyle{remark}

\usepackage{enumerate}

\usepackage{etoolbox}
\patchcmd{\thmhead}{(#3)}{#3}{}{}
\makeatletter
\renewenvironment{proof}[1][\proofname]{
  \par\pushQED{\qed}\normalfont
  \topsep6\p@\@plus6\p@\relax
  \trivlist\item[\hskip\labelsep\bfseries\itshape #1\@addpunct{.}]
}{\popQED\endtrivlist\@endpefalse}
\makeatother

\let\phi\varphi

\DeclareMathOperator{\kk}{\Bbbk}

\DeclareMathOperator{\Hh}{\mathfrak{H}}

\DeclareMathOperator{\pt}{\mathrm{pt}}

\DeclareMathOperator{\End}{\mathrm{End}}

\DeclareMathOperator{\Sss}{\mathbb{S}}

\DeclareMathOperator{\pr}{\mathrm{pr}}

\DeclareMathOperator{\Q}{\mathbb{Q}}

\DeclareMathOperator{\E}{\EuScript{E}}

\DeclareMathOperator{\G}{\mathbb{G}}

\DeclareMathOperator{\id}{\mathrm{id}}

\DeclareMathOperator{\Sym}{\mathrm{Sym}}

\DeclareMathOperator{\inv}{\mathrm{inv}}

\DeclareMathOperator{\Z}{\mathbb{Z}}
\DeclareMathOperator{\Cc}{\mathbb{C}}

\DeclareMathOperator{\R}{\mathbb{R}}

\DeclareMathOperator{\Oo}{\EuScript{O}}

\DeclareMathOperator{\Spec}{\mathrm{Spec}}

\DeclareMathOperator{\dR}{\mathrm{dR}}

\DeclareMathOperator{\tr}{\mathrm{tr}}

\DeclareMathOperator{\sgn}{\mathrm{sgn}}

\DeclareMathOperator{\GL}{\mathrm{GL}}

\DeclareMathOperator{\SL}{\mathrm{SL}}

\DeclareMathOperator{\Mat}{\mathrm{Mat}}

\DeclareMathOperator{\CP}{\mathbb{C}P}
\DeclareMathOperator{\Hhh}{\mathrm{H}}
\DeclareMathOperator{\Hol}{\mathrm{Hol}}
\DeclareMathOperator{\Mm}{\EuScript{M}}

\DeclareMathOperator{\Sol}{\mathrm{Sol}}
\DeclareMathOperator{\cH}{\EuScript{H}}
\DeclareMathOperator{\cT}{\mathsf{T}}
\DeclareMathOperator{\vF}{\mathsf{v}}
\newcommand{\dd}{\mathrm{d}}

\DeclareMathOperator{\chr}{\mathrm{chr}}
\DeclareMathOperator{\cyc}{\mathrm{cyc}}
\DeclareMathOperator{\reg}{\mathrm{reg}}
\DeclareMathOperator{\Rrr}{\mathrm{R}}
\DeclareMathOperator{\uF}{\mathsf{u}}
\DeclareMathOperator{\wF}{\mathsf{w}}
\DeclareMathOperator{\Sp}{\mathrm{Sp}}
\DeclareMathOperator{\Uu}{\mathrm{U}}

\DeclareMathOperator{\HP}{\mathbb{H}\mathit{P}}

\DeclareMathOperator{\Pic}{\mathrm{Pic}}
\DeclareMathOperator{\ch}{\mathrm{ch}}
\DeclareMathOperator{\Bc}{\mathrm{Bc}}
\DeclareMathOperator{\Ff}{\EuScript{F}}

\makeatletter
  \DeclareSymbolFont{ntxletters}{OML}{ntxmi}{m}{it}
  \SetSymbolFont{ntxletters}{bold}{OML}{ntxmi}{b}{it}
  \re@DeclareMathSymbol{\leftharpoonup}{\mathrel}{ntxletters}{"28}
  \re@DeclareMathSymbol{\leftharpoondown}{\mathrel}{ntxletters}{"29}
  \re@DeclareMathSymbol{\rightharpoonup}{\mathrel}{ntxletters}{"2A}
  \re@DeclareMathSymbol{\rightharpoondown}{\mathrel}{ntxletters}{"2B}
  \re@DeclareMathSymbol{\triangleleft}{\mathbin}{ntxletters}{"2F}
  \re@DeclareMathSymbol{\triangleright}{\mathbin}{ntxletters}{"2E}
  \re@DeclareMathSymbol{\partial}{\mathord}{ntxletters}{"40}
  \re@DeclareMathSymbol{\flat}{\mathord}{ntxletters}{"5B}
  \re@DeclareMathSymbol{\natural}{\mathord}{ntxletters}{"5C}
  \re@DeclareMathSymbol{\star}{\mathbin}{ntxletters}{"3F}
  \re@DeclareMathSymbol{\smile}{\mathrel}{ntxletters}{"5E}
  \re@DeclareMathSymbol{\frown}{\mathrel}{ntxletters}{"5F}
  \re@DeclareMathSymbol{\sharp}{\mathord}{ntxletters}{"5D}
  \re@DeclareMathAccent{\vec}{\mathord}{ntxletters}{"7E}
  \DeclareSymbolFont{cmletters}{OML}{cmm}{m}{it}
  \SetSymbolFont{cmletters}{bold}{OML}{cmm}{b}{it}
  \re@DeclareMathSymbol{\wp}{\mathord}{cmletters}{"7D}
\makeatother

\renewcommand{\epsilon}{\varepsilon}

\DeclareFontFamily{U}{EBSUB}{}
\DeclareFontShape{U}{EBSUB}{m}{it}{<-> EBGaramond-Italic-tlf-lgr}{}
\DeclareFontFamily{U}{EBSUB}{}
\DeclareSymbolFont{EBSUB}{U}{EBSUB}{m}{it}

\DeclareMathSymbol{\mu}{\mathord}{EBSUB}{`m}
\DeclareMathSymbol{\varDelta}{\mathord}{EBSUB}{`D}
\DeclareMathSymbol{\varOmega}{\mathord}{EBSUB}{`W}

 \hypersetup{
    colorlinks,
    linkcolor={red!50!black},
    citecolor={blue!50!black},
    urlcolor={blue!80!black}
}

\usepackage{stackrel}
\usepackage[all]{xy}

\usepackage{bm}

\usepackage{indentfirst}

\newcommand{\pullbackcorner}[1][dr]{\save*!/#1-1.7pc/#1:(-1,1)@^{|-}\restore}

\usepackage{tocloft}

\title{\bf Two-Valued Groups, Chazy Equation,\\
Dubrovin--Frobenius Structures, and QYBE}

\date{}

\author{Victor Buchstaber, Mikhail Kornev, and Vladimir Rubtsov}

\begin{document}

\maketitle

\begin{abstract}
We show that the associativity condition of the universal symmetric $2$-algebraic 2-valued group defined by the Buchstaber polynomial admits several mutually equivalent interpretations from the viewpoints of the Chazy equation, Gauss--Manin connections, Dubrovin--Frobenius structures, and the quantum Yang--Baxter equation. These results place the universal 2-valued law in a unified framework linking geometry, algebraic topology, group theory, and mathematical physics.
\end{abstract}

{\small\tableofcontents}

\thispagestyle{empty}
\clearpage
\setcounter{page}{1}

\section{Introduction}

The aim of this paper is to present several mutually equivalent, but conceptually distinct, interpretations of the associativity condition for the universal symmetric $2$-algebraic 2-valued group. These interpretations come from the theory of quasimodular forms and the Chazy equation, the Gauss--Manin connection, Frobenius algebras and Dubrovin--Frobenius structures, and the quantum Yang--Baxter equation. Throughout the paper, the geometry of discriminant sets plays an important role; this theme was central to the work of Vladimir Igorevich Arnold and his school.

In \cite{Buchstaber90}, a three-parameter family of polynomials was introduced, later called the {\it Buchstaber polynomials}
\begin{equation}\label{B}B_{\boldsymbol{a}}(z; x, y) = (x + y + z - a_2xyz)^2 - 4(1 + a_3xyz)(xy + yz + xz + a_1xyz).\end{equation}
It defines the multiplication law in a two-valued group that is universal in the class of two-valued formal groups obtained by applying the modulus square construction to formal groups of the form
\begin{equation}\label{F_u_v}\Ff(u, v) = \frac{u^2\lambda_1(v)-v^2\lambda_1(u)}{u\lambda_2(v)-v\lambda_2(u)},\end{equation}
where $\lambda_1(u)$ and $\lambda_2(u)$ are formal power series over some $\Q$-algebra $A$.

The class of formal groups of the form \eqref{F_u_v} includes the formal groups defining the fundamental Hirzebruch genera: the Todd genus, the signature, the elliptic genus, and the Krichever genus.

Recently, in \cite{BK_disciminants}, the notions of algebraic $n$-valued monoids and groups were introduced. An {\it algebraic $n$-valued monoid} is an algebraic variety $X$ endowed with an associative $n$-valued multiplication (addition) defined by a rational morphism\footnote{That is, defined on some open subset $Y\subset X\times X$ in the Zariski topology by a morphism of algebraic varieties $\ast: Y\to \Sym^n(X)$.}
$$X\times X\to \Sym^n(X)$$
and a neutral element (zero) $e\in X$ such that $x\ast e = e\ast x = [x, x, ..., x]$ for every $x\in X$. An {\it algebraic $n$-valued group} is an algebraic $n$-valued monoid on $X$ equipped with a rational inversion morphism $\inv: X\to X$ for which, whenever $\inv(x)$ exists, the following conditions hold: $x\ast\inv(x)$ and $\inv(x)\ast x$ exist, and $e\in x\ast\inv(x)$, $e\in \inv(x)\ast x$.

In \cite{BK}, a class of symmetric $n$-algebraic $n$-valued groups $\G_{\Cc}(P_n)$ on $\Cc$ was introduced. They have neutral element $0$, inverse map $\inv(x) = (-1)^nx$, and multiplication
$$x\ast y = [z\mid P_n(z; x, y) = 0],$$
where $P_n(z; (-1)^nx, (-1)^ny)$ is a polynomial symmetric in $x,y,z$, of degree at most $n$ in each of the variables $x$, $y$, and $z$.

More precisely, the polynomial
\begin{equation}\label{polynom_P}P_n(z; x, y) = \sum\limits_{k = 0}^nF_{n-k}(x, y)z^{k},\end{equation}
where $F_{n-k}(x,y)\in\Cc[x, y]$ and
\begin{equation}\label{F_0(0, 0)=1}F_0(0, 0) = 1,\end{equation}
defines the operation $\ast$, which assigns to elements $x$ and $y$ of the complex numbers $\Cc$, such that $F_0(x, y)\neq 0$, the multiset $x\ast y$ of roots of the polynomial $P_n(z;x,y)$ with respect to the variable $z$.

Let us record the axioms of an $n$-valued algebraic group in this case.

From the neutral element axiom with $x = 0$, we obtain the equality
\begin{equation}\label{G(P)_neutral_element}\begin{aligned}P_n(z; 0, y) = F_0(0, y)(z-y)^n.\end{aligned}\end{equation}

The inverse element axiom says that there exists an algebraic map $\inv: \Cc\to \Cc$, such that for every $x\in\Cc$
\begin{equation}\label{G(P)_inverse_element}F_n(x, \inv(x)) = F_n(\inv(x), x) = 0.\end{equation}

Denote by $z_j = z_j(x, y)$, $j = 1, ..., n$, the roots of the polynomial $P_n(z; x,y)$. Then the associativity condition can be written as the identity of two polynomials in the variable $t$:
\begin{equation}\label{assoc_condition}\begin{aligned} \prod\limits_{j, k = 1}^n (t - z_j(z_k(x, y), z)) = \prod\limits_{j, k = 1}^n (t - z_j(x, z_k(y, z))). \end{aligned}\end{equation}
Observe that the coefficients of the powers of the variable $t$ in the polynomials on both sides of equality (\ref{assoc_condition}) are polynomials in $x$, $y$, and $z$.

For $n=2$, the polynomial $P_n(z; x, y)$ satisfying conditions \eqref{polynom_P}--\eqref{G(P)_inverse_element} takes the following form in the elementary symmetric functions $e_1,e_2$ and $e_3$:
\begin{equation}\label{P_2_universal_0}P_2(z; x, y) =  e_1^2 - 4 e_2 + k_2 e_3 + k_8 e_3^2 + k_4 e_1 e_3 + k_6 e_2 e_3.\end{equation}
In \cite[Theorem 6.3]{Buchstaber_Veselov19} and \cite[Theorem 1]{BK}, a complete classification of the groups $\G_{\Cc}(P_2)$ for $n=2$ was obtained. It was established that the associativity axiom \eqref{assoc_condition} for the law \eqref{P_2_universal_0} is equivalent to the relation:
\begin{equation}\label{assoc_condition_first} 4k_8 = k_4^2-k_6k_2. \end{equation}
The approach of \cite{Buchstaber_Veselov19} relies on the complete classification of all two-valued one-dimensional formal groups over $\Q$-algebras from \cite{Buchstaber75}. This classification uses classical results from the theory of generalized shift operators on coalgebras. In \cite{BK}, an explicit algebraic construction of the universal two-valued group in the class of groups $\G_{\Cc}(P_2)$ is given.

The polynomial $B_{\bm{a}}(z; x,y)$, see \eqref{B}, takes the following form in the elementary symmetric functions $e_1, e_2$, and $e_3$ with respect to the variables $x, y$, and $z$:
\begin{equation}\label{B_a_e_0}B_{\bm{a}}(z;x,y)=e_1^2-4e_2-4a_1e_3-2a_2 e_1 e_3-4a_3 e_2 e_3+(a_2^2-4a_1 a_3)e_3^2.\end{equation}
Comparing formulas \eqref{P_2_universal_0} and \eqref{B_a_e_0}, we see that one can set:
\begin{equation}\label{k_and_a_intro}k_2 = -4a_1,\quad k_4 = -2a_2,\quad k_6 = -4a_3, \quad k_8=a_2^2-4a_1a_3.\end{equation}

For any choice of complex parameters $\boldsymbol{a} = (a_1, a_2, a_3)$, the universal symmetric 2-algebraic two-valued group is realized in the form of the law \eqref{B_a_e_0}, which is obtained after the substitution $x\mapsto -1/x$, $y\mapsto-1/y$, $z\mapsto-1/z$ in the two-valued law $D_{\bm{a}}(z;x,y)$ of the coset construction of the group of points of the elliptic curve
$$\E = \{\eta^2 = \zeta^3+a_1\zeta^2+a_2\zeta+a_3\}$$
with respect to the involution $(\zeta, \eta)\mapsto (\zeta,-\eta)$, $\infty\mapsto\infty$. The polynomial $D_{\bm{a}}(z;x,y)$ is called the {\it generalized Kontsevich polynomial} in \cite{Kontsevich_type_polynomials}.

According to \cite[Theorem 4.7]{Kontsevich_type_polynomials}, the law (\ref{B}) for nonzero discriminant $\delta_{\bm{a}} = \Delta_t(t^3+a_1t^2+a_2t+a_3)$ extends from $\mathbb{C}$ to $\Cc\!P^1$, yielding a structure of a two-valued algebraic group $\G_{\CP^1}(B_{\bm{a}})$. In \cite{BK_Uspekhi, BK_disciminants}, it is shown that for $\delta_{\bm{a}} = 0$, the universal law \eqref{B_a_e_0} degenerates to a two-valued law realized by the coset construction from the group of points of a degenerate cubic.

Note that \cite[Theorem 2]{BK} gives a classification of symmetric 3-algebraic three-valued groups.

We now state the main results of the paper.

Consider the dynamical system in the phase space with coordinates $k_2$, $k_4$, and $k_6$:
\begin{equation}\label{dyn_sys_2} k_2' = \frac{\lambda}{2}k_4,\quad k_4' = \frac{3\lambda}{4}k_6,\quad k_6' = \frac{\lambda}{4}(k_4^2-k_2k_6)=\lambda k_8, \end{equation}
where $\lambda$ is a nonzero constant, $k_j=k_j(\tau)$, $\tau$ is a complex parameter on the upper half-plane $\Hh$ and $(-)' = 1/(2\pi i)\dd/\dd\tau$. There exists a one-parameter family $\G_{\Cc}(B_{\tau})$ of symmetric $2$-algebraic two-valued groups containing representatives of all isomorphism classes of symmetric $2$-algebraic two-valued groups arising from coset constructions of elliptic curves
\begin{equation}\label{cubic_2}y^2 = x^3-k_2/4\,x^2-k_4/2\,x-k_6/4\end{equation}
(it is assumed that the discriminant $\delta_{\bm{k}}$ of the polynomial on the right-hand side is nonzero). Within the family $\G_{\Cc}(B_{\tau})$, the associativity condition \eqref{assoc_condition_first} of the universal symmetric 2-algebraic two-valued group (see \eqref{P_2_universal}) is equivalent to the Chazy III equation
\begin{equation}\label{Chazy_equation_2}y''' = yy'' - \frac{3}{2}y'^2,\end{equation}
where $y=y(\tau)=-\lambda k_2/4$. The case of a nodal cubic corresponds to the constant family $k_2\equiv1, k_4\equiv k_6\equiv0$, and the case of a cuspidal cubic corresponds to the family $k_2=k_4=k_6=0$. This is the content of Theorem \ref{phase_flow}.

The normalization $\lambda=-1/\pi^2$ corresponds to the solution of equation \eqref{Chazy_equation_2} in the form of the quasimodular form $y(\tau) = E_2(\tau)$.

The space $\Sol$ of solutions of the Chazy equation carries a natural action of the group $\SL_2(\mathbb C)$. Under this action, each solution $y\in\Sol$ is sent to the solution
\begin{equation}\label{action_2}(y\big\|_{2} g)(\tau) := \frac{1}{(c\tau+d)^2}y\left(g\tau\right) - \frac{6c}{c\tau + d},\end{equation}
where the notation $g\tau$ denotes the fraction $(a\tau + b)/(c\tau + d)$. The space $\Sol$ decomposes into three orbits represented by the solutions $E_2(\tau)$, $0$, and $1$. Any one-parameter family $\G_{\Cc}(B_{\tau})$ of symmetric 2-algebraic two-valued groups is obtained from one of the three families of Theorem \ref{phase_flow} by the action of an element of the group $G=\SL_2(\Cc)$ in the sense of \eqref{action_2}. Thus, the $G$-equivalence classes of one-parameter families $\G_{\Cc}(B_{\tau})$ are in one-to-one correspondence with the $G$-orbits $G(E_2)$, $G(0)$, $G(1)$ of the solutions $y = E_2$, $y = 0$, and $y = 1$ of the Chazy equation \eqref{Chazy_equation_2}.

For a smooth family of elliptic curves $\pi:\E\to \cT$, we consider the Gauss--Manin connection on $\Hhh^1_{\dR}(\E/\cT)$. We follow the approach proposed by Katz and Oda in \cite{Katz_Oda68}. In an explicit basis of relative cohomology (using the Griffiths--Dwork reduction in Movasati's form \cite{Movasati2012}), the connection matrix is written down and the Ramanujan vector field is defined. The classical Ramanujan identities \eqref{Ramanujan} and the Chazy III equation are interpreted as certain horizontality conditions for the Ramanujan vector field, see Proposition \ref{prop:Ramanujan}.

Theorem \ref{Dubrovin-Frobenius_and_2-valued} states that the same condition \eqref{assoc_condition_first} is equivalent to the associativity of the three-dimensional Dubrovin--Frobenius structure corresponding to the potential
\begin{equation}\label{potential_F}F(t) = \frac{1}{2}(t^1)^2t^3+\frac{1}{2}t^1(t^2)^2 + f(t^2,t^3).\end{equation}

We show that the Casimir element of the Dubrovin--Frobenius structure with potential \eqref{potential_F} satisfies the quantum Yang--Baxter equation if and only if condition \eqref{assoc_condition_first} holds. This is a corollary of the surprising Theorem \ref{general_Yang_Baxter_and_associativity_condition}. In this theorem, the 4-parametric family $R = R(a,b,c,d)$ of $(9\times9)$-matrices \eqref{R_matrix_2} is presented. It is shown that the matrices $R$ define solutions of the QYBE if and only if $a^2 - d - bc = 0$.

Thus, we obtain the following scheme of equivalences:
{\small
\begin{equation}\label{diagram_of_equivalences}
\xymatrix@C=2em@R=1.5em{
& \txt{Horizontality\\ of the Ramanujan\\ vector field}\ar@{<=>}[d]  &\\
\txt{Chazy equation\ } \ar@{<=>}[r] & \txt{Associativity\\ of the two-valued group} \ar@{<=>}[r] \ar@{<=>}[d] & \txt{\ Associativity\\ \ of the Dubrovin--\\ \ Frobenius structure}\\
& \txt{QYBE for the\\ Dubrovin--Frobenius structure} &
}
\end{equation}
}

The structure of the paper is as follows. In Section \ref{Recollection_on_quasimodular_forms}, we recall the facts from the theory of quasimodular forms and elliptic curves needed for our purposes. In Section \ref{Universal_symmetric_group}, we discuss the law of the universal symmetric 2-algebraic two-valued group and its realization via a coset construction. In Section \ref{modulus_square_construction}, we recall the modulus square construction and show connections between the theory of elliptic functions and the theory of two-valued formal groups. In Section \ref{Curves_in_moduli_space}, we show the relation between the associativity equation and the Chazy III equation and establish a correspondence between the set of $\SL_2(\Cc)$-orbits in the space of solutions and the $\SL_2(\mathbb C)$-classes of one-parameter families of two-valued groups. In Section \ref{Ramanujan_System_and_Gauss_Manin_Connections}, we derive a geometric interpretation of the Ramanujan system in terms of the Gauss--Manin connection. In Section \ref{Frobenius_Structures}, we give explicit constructions of Frobenius algebras, Frobenius $n$-homomorphisms, Dubrovin--Frobenius structures, as well as the quantum Yang--Baxter equation and its $R$-matrix. The article ends with Section \ref{conclusion} where we present further directions and open problems.

\section{Basics of Quasimodular Forms and Elliptic Curves}\label{Recollection_on_quasimodular_forms}

To fix notation, let us recall some facts from the theory of quasimodular forms, following \cite{Zagier123}.

Let $\Hh$ be the upper half-plane, $\tau = u+iv\in\Hh$, and $\Gamma \subset \SL_2(\Z)$ a subgroup of the special linear group of $2\times 2$ matrices over the integers. The group $\SL_2(\Z)$ acts on $\Hh$ by M\"obius transformations:
$$\gamma\tau=\frac{a\tau+b}{c\tau+d}.$$
For every function $f:\Hh\to\Cc$ and integer weight $k$, the slash operator is defined by
$$(f|_k\gamma)(\tau) := (c\tau+d)^{-k}\,f(\gamma\tau).$$

Recall that a {\it holomorphic modular form of weight $k$ on $\Gamma$} is a holomorphic function $f:\Hh\to \Cc$ such that the following two conditions hold:
\begin{enumerate}[\bf (i)]
\item (Modular transformation law) $f|_k\gamma=f$ for every $\gamma\in\Gamma$.
\item (Holomorphy condition at the cusps) $f(u+iv) = O(e^{Cv})$ as $v\to\infty$ and $f(u+iv) = O(e^{C/v})$ as $v\to 0$ for some $C>0$. We denote by $\Hol_0(\Hh)$ the class of functions satisfying this condition.
\end{enumerate}

An important example of a modular form is the Eisenstein series, defined for each $k\geq4$ by
$$E_k(\tau) = \sum\limits_{\Gamma_1/\Gamma_{\infty}}1|_k\gamma = \frac{1}{2}\sum_{\substack{c,d\in\mathbb{Z}\\ \gcd(c,d)=1}}\frac{1}{(c\tau+d)^k}.$$

An {\it almost holomorphic modular form of weight $k$ and depth at most $p$ on $\Gamma$} is a real-analytic function $F:\Hh\to\Cc$ such that $F|_k\gamma=F$ for every $\gamma\in\Gamma$, and $F$ can be represented as a polynomial in $1/v$ with holomorphic coefficients, i.e.
\begin{equation}\label{almost_holomorphic_modular_form}F(\tau)=\sum_{r=0}^{p} f_r(\tau)\,v^{-r}\end{equation}
for some $p\geq 0$, where each function $f_r(\tau)$ is holomorphic on $\Hh$ and belongs to the class $\Hol_0(\Hhh)$. Denote by $\widehat{\Mm}^{\leq p}_k$ the space of such forms for the given $\Gamma$. Further, let
$$\widehat{\Mm}_\ast = \bigoplus\limits_k \widehat{\Mm}_k,\quad\widehat{\Mm}_k=\bigcup\limits_p\widehat{\Mm}^{\leq p}_k$$
be the graded and filtered ring of all almost holomorphic modular forms.

Let $F$ be an almost holomorphic modular form as in \eqref{almost_holomorphic_modular_form}. Then the function $f_0(\tau)$ is called a {\it quasimodular form of weight $k$ and depth at most $p$}. Equivalently, this is a function $f$ in the class $\Hol_0(\Hhh)$ such that, for fixed $\tau\in\Hhh$ and variable $\gamma =\left( \begin{matrix} a & b\\ c & d \end{matrix}\right)\in\Gamma$, the function $(f|_k\gamma)(\tau)$ is a polynomial of degree at most $p$ in the variable $c/(c\tau+d)$. Introduce the notation
$$\widetilde{\Mm}_\ast = \bigoplus\limits_{k}\widetilde{\Mm}_k,\quad \widetilde{\Mm}_k = \bigcup\limits_p \widetilde{\Mm}^{\leq p}_k,$$
where $\widetilde{\Mm}^{\leq p}_k$ denotes the space of quasimodular forms of weight $k$ and depth at most $p$ on $\Gamma$.

As is well known, the algebras $\widehat{\Mm}_\ast$ and $\widetilde{\Mm}_\ast$ are isomorphic.

\begin{prop}[\cite{Zagier123}]
Let $\Gamma$ be a non-cocompact discrete subgroup of the group $\SL_2(\R)$, and let $\phi$ be any quasimodular form of weight $2$ that is not modular $($for example, $\phi = E_2$ in the case $\Gamma\subset\SL_2(\Z)$). Denote by $D = (-)'=: \frac{1}{2\pi i}\frac{d}{d\tau} = q\frac{d}{dq}$ the Euler derivative. Then the following statements hold:
\begin{enumerate}[\bf (i)]
\item For $\Gamma = \SL_2(\Z)$, the algebra of modular forms $\Mm_\ast$ is isomorphic to the polynomial algebra $\Cc[E_4, E_6]$.

\item For $\Gamma = \SL_2(\Z)$, the algebra of almost holomorphic modular forms $\widehat{\Mm}_\ast$ is isomorphic to $\Cc[E_2^\ast, E_4, E_6]$. 

\item For $\Gamma = \SL_2(\Z)$, the graded algebra of quasimodular forms $\widetilde{\Mm}_\ast$ is isomorphic to $\Cc[E_2, E_4, E_6]$.

\item The space of quasimodular forms on $\Gamma$ is closed under differentiation: $D(\widetilde{\Mm}^{\leq p}_k)\subset \widetilde{\Mm}^{\leq p+1}_{k+2}$ for all $k,p\geq 0$. The Ramanujan identities hold:
\begin{equation}\label{Ramanujan}E_2'=\frac{E_2^2-E_4}{12},\quad E_4' = \frac{E_2E_4-E_6}{3},\quad E_6' = \frac{E_2E_6-E_4^2}{2}.\end{equation}

\item Every quasimodular form on $\Gamma$ is a polynomial in $\phi$ with modular coefficients:
$\widetilde{\Mm}^{\leq p}_k(\Gamma) = \bigoplus\limits_{r=0}^p\Mm_{k-2r}(\Gamma)\phi^r$ for all $p,k\geq 0$.

\item Every quasimodular form on $\Gamma$ can be written uniquely as a linear combination of $\phi$ and derivatives of modular forms: for all $p,k\geq 0$, one has
$$\widetilde{\Mm}^{\leq p}_k(\Gamma) = \begin{cases} \bigoplus\limits_{r=0}^pD^r(\Mm_{k-2r}(\Gamma)) & \text{if }p<k/2,\\ \bigoplus\limits_{r=0}^{k/2-1}D^r(\Mm_{k-2r}(\Gamma))\oplus\Cc\cdot D^{k/2-1}\phi, & \text{if }p\geq k/2 \end{cases}$$ 
\end{enumerate}
\end{prop}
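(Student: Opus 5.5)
Since this Proposition is quoted from \cite{Zagier123}, I would not reprove all six items from scratch. Instead I would organize a proof around a few structural facts and derive the rest from them.

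\textbf{Step 1: items (i) and (iii).} Item (i) is the classical structure theorem. It follows from the valence formula ($k/12$ zeros counted with weights $1/2,1/3$ at $i,\rho$): it gives $\dim\Mm_k\le \lfloor k/12\rfloor+1$ (with the usual correction for $k\equiv 2 \pmod{12}$), and one shows $\Delta=(E_4^3-E_6^2)/1728$ is a nonvanishing cusp form. Then $\Mm_k=\Cc E_4^aE_6^b$-span plus $\Delta\Mm_{k-12}$, by induction on $k$. Algebraic independence of $E_4,E_6$ follows by comparing zeros at $i$ and $\rho$.

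For item (iii), I would first establish the transformation law $E_2|_2\gamma=E_2+\frac{12c}{2\pi i(c\tau+d)}$, by Hecke's trick or via $\Delta'/\Delta=E_2$. This shows that $E_2$ is quasimodular of depth $1$ and is not modular.

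\textbf{Step 2: item (v), the key step, by induction on depth.} Let $f\in\widetilde{\Mm}^{\le p}_k$ with $(f|_k\gamma)(\tau)=\sum_{r\le p} f_r(\tau)\bigl(\tfrac{c}{c\tau+d}\bigr)^r$. Applying the cocycle property to products $\gamma_1\gamma_2$ shows that the top coefficient $f_p$ is modular of weight $k-2p$.

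Since $\phi$ is quasimodular of weight $2$ but not modular, its depth-one coefficient is a nonzero constant multiple of $1$, say $\alpha\neq0$. Therefore $f-\frac{f_p}{\alpha^p}\phi^p$ has depth at most $p-1$, and induction gives the decomposition. Uniqueness also follows from the top-coefficient argument: a nontrivial relation $\sum g_r\phi^r=0$ with top term $g_p\phi^p$ would have nonzero depth-$p$ coefficient $\alpha^pg_p$.

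Item (iii) then follows from (i) and (v) with $\phi=E_2$. Item (ii) follows from the isomorphism $\widehat{\Mm}_*\cong\widetilde{\Mm}_*$ taking $f_0\leftrightarrow F$, with $E_2\mapsto E_2^*=E_2-\frac{3}{\pi v}$.

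\textbf{Step 3: item (iv) and the Ramanujan identities.} Closure under $D$ is obtained by differentiating the slash relation. If $f|_k\gamma=\sum f_r X^r$ with $X=c/(c\tau+d)$, then differentiating $f(\gamma\tau)=(c\tau+d)^k\sum f_rX^r$ with respect to $\tau$ uses $d(\gamma\tau)/d\tau=(c\tau+d)^{-2}$ and $dX/d\tau=-X^2$. This gives $(Df)|_{k+2}\gamma$ as a polynomial in $X$ of degree at most $p+1$.

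The Ramanujan identities then come from a dimension count. For instance, $E_4'-\frac{1}{3}E_2E_4$ is modular of weight $6$, because the depth-one terms cancel. Since $\Mm_6=\Cc E_6$, comparing constant $q$-coefficients fixes the multiple $-\frac13E_6$. The identity for $E_6'$ uses $\Mm_8=\Cc E_4^2$. The identity for $E_2'$ uses $E_2'-\frac{1}{12}E_2^2\in\Mm_4=\Cc E_4$.

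\textbf{Step 4: item (vi).} The main obstacle here is showing that the $D^r\Mm_{k-2r}$ are independent and span, together with $D^{k/2-1}\phi$. I would argue by the top-coefficient map again. If $g\in\Mm_{k-2r}$, the depth-$r$ coefficient of $D^rg$ is a nonzero constant times $g$, namely a product of factors of the form $(k-2r+j)$ up to powers of $2\pi i$, and this vanishes exactly when $k-2r\le0$ with $r\ge1$. That explains the split at $p\ge k/2$: for $k-2r=0$ the constants are killed by $D$, and they are replaced by $D^{k/2-1}\phi$, whose top coefficient is nonzero. Counting dimensions against (v) then finishes the argument.
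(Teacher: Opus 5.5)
The paper gives no proof of this proposition; it is quoted from \cite{Zagier123}, and your outline is correct and follows essentially the standard arguments of that source: the valence formula for (i), the top-coefficient lemma with induction on depth for (v) (hence (iii), and (ii) via the constant-term isomorphism), differentiating the slash relation and the Serre-derivative combinations against $\dim\Mm_4=\dim\Mm_6=\dim\Mm_8=1$ for (iv), and the leading coefficient $(k-2r)(k-2r+1)\cdots(k-r-1)/(2\pi i)^r$ of $D^r g$ for (vi). One small slip is harmless: that product vanishes only for $-(r-1)\le k-2r\le 0$, not for every $k-2r\le 0$, but since $\Mm_w(\Gamma)=0$ for $w<0$ this does not affect your argument.
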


Let us recall the connection between the theory of quasimodular forms and the theory of elliptic curves.

Every elliptic curve over $\Cc$ is isomorphic to a curve in Weierstrass form:
\begin{equation}\label{ellitpic_curve_equation}y^2 = 4x^3 - g_2x - g_3,\end{equation}
where
$$g_2(\tau) = \frac{4\pi^4}{3}E_4(\tau),\quad g_3(\tau) = \frac{8\pi^6}{27}E_6(\tau)$$
denote the classical Weierstrass invariants associated with the lattice $\Lambda = \Z+\tau\!\Z$ for some $\tau\in\Hh$.

The following fact is well known:

\begin{prop}[{\cite[Ch. 20, §20.5(i), Eq. (20.5.1)]{Handbook_of_functions}}]\label{alpha}
The following identity holds:
\begin{equation}\label{wp_and_E_2}\wp(z;\tau)= -\frac{\dd^2}{\dd z^2}\log\theta_1(z;\tau)\;-\;\frac{\pi^2}{3}E_2(\tau),\end{equation}
where $\theta_1(z; \tau)$ is the Jacobi theta function with quasi-periods $1$ and $\tau$, defined by
\begin{equation}\label{vartheta}\theta_1(z;\tau)=\sum_{n\in\mathbb Z}(-1)^n
e^{\pi i (n+\tfrac12)^2\tau}\,e^{2\pi i (n+\tfrac12)z}.\end{equation}
\end{prop}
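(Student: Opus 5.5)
We prove identity \eqref{wp_and_E_2} by showing that the difference of the two sides is a $z$-independent function of $\tau$, and then fixing that function from the Laurent expansion at $z=0$. The two sides are elliptic with the same principal part, so their difference is constant in $z$.

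\textbf{Step 1 (ellipticity).} From the series \eqref{vartheta}, $\theta_1$ has the quasi-periodicity
$$\theta_1(z+1;\tau)=-\theta_1(z;\tau),\qquad \theta_1(z+\tau;\tau)=-e^{-\pi i\tau-2\pi i z}\theta_1(z;\tau).$$
The first follows from $e^{2\pi i(n+1/2)}=-1$. The second follows by shifting $n\mapsto n+1$ and completing the square. Hence $\log\theta_1$ changes under $z\mapsto z+1$ and $z\mapsto z+\tau$ only by a term that is at most linear in $z$. Therefore
$$g(z):=-\frac{\dd^2}{\dd z^2}\log\theta_1(z;\tau)$$
is elliptic with respect to $\Lambda=\Z+\tau\Z$.

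\textbf{Step 2 (the poles).} Next we locate the zeros of $\theta_1$.
\begin{itemize}
\item $\theta_1$ is odd in $z$: pair the terms $n$ and $-n-1$. Hence $\theta_1(0)=0$.
\item Integrating $\theta_1'/\theta_1$ around a fundamental parallelogram, the quasi-periodicity gives exactly one zero there, and it is simple.
\end{itemize}
So $g$ has only double poles at the lattice points, with principal part $1/z^2$. Consequently $\wp(z;\tau)-g(z)$ is an entire elliptic function. By Liouville's theorem it equals a constant $C(\tau)$.

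\textbf{Step 3 (the constant).} This is the main step. Since $\wp(z)=z^{-2}+O(z^2)$, expand $\theta_1(z)=\theta_1'(0)\bigl(z+\tfrac{\theta_1'''(0)}{6\theta_1'(0)}z^3+\dots\bigr)$. Then
$$-\frac{\dd^2}{\dd z^2}\log\theta_1=\frac1{z^2}-\frac{\theta_1'''(0)}{3\theta_1'(0)}+O(z^2),$$
which gives
$$C(\tau)=\frac{\theta_1'''(0)}{3\theta_1'(0)}.$$

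It remains to show that
$$\frac{\theta_1'''(0)}{\theta_1'(0)}=-\pi^2E_2(\tau).$$
I would use the heat equation $\partial_z^2\theta_1=4\pi i\,\partial_\tau\theta_1$, which holds termwise for \eqref{vartheta}. Combined with Jacobi's formula $\theta_1'(0)=2\pi\eta(\tau)^3$, it gives
$$\frac{\theta_1'''(0)}{\theta_1'(0)}=4\pi i\,\partial_\tau\log\eta^3 .$$
Here $\eta$ is the Dedekind eta function. With $D=\frac1{2\pi i}\partial_\tau$, the right-hand side equals $-8\pi^2\cdot3\,D\log\eta=-8\pi^2\cdot\frac{3}{24}E_2=-\pi^2E_2$, using $D\log\eta=E_2/24$.

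Alternatively, one can avoid the Jacobi formula by using the product expansion
$$\theta_1=2q^{1/8}\sin\pi z\prod_{n\ge1}(1-q^n)(1-q^ne^{2\pi iz})(1-q^ne^{-2\pi iz}).$$
Its logarithmic second derivative at $0$ is
$$-\frac{\pi^2}{3}+8\pi^2\sum_{n\ge1}\frac{q^n}{(1-q^n)^2}\cdot(\dots)\, ,$$
which collapses to $-\tfrac{\pi^2}{3}E_2$ via $E_2=1-24\sum\sigma_1(n)q^n$.

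In either route this gives $C(\tau)=-\frac{\pi^2}{3}E_2(\tau)$, which is the identity \eqref{wp_and_E_2}. The genuinely delicate point is this last normalization of the constant, in particular the sign and factor conventions in \eqref{vartheta}. I would double-check it by comparing the $q^0$ terms: as $q\to0$, $\theta_1\sim 2q^{1/8}\sin\pi z$, so $-(\log\sin\pi z)''=\pi^2/\sin^2\pi z$. Its expansion is $z^{-2}+\pi^2/3+O(z^2)$, which yields $C=-\pi^2/3$ at leading order, consistent with $E_2=1+O(q)$.
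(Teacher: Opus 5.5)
Your proof is correct. The paper gives no argument of its own: it quotes the identity from the handbook, and the cited \S20.5(i) there is the infinite-product expansion of $\theta_1$. Your write-up therefore supplies a genuine derivation. The quasi-periodicities you read off from \eqref{vartheta} are right, so $-(\log\theta_1)''$ is elliptic. The argument principle gives simple zeros exactly at the lattice points. Liouville then yields $\wp+(\log\theta_1)''=C(\tau)=\theta_1'''(0)/(3\theta_1'(0))$.

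Both of your evaluations of this constant check out:
\begin{itemize}
\item In the heat-equation route, $\partial_z^2\theta_1=4\pi i\,\partial_\tau\theta_1$ gives $\theta_1'''(0)/\theta_1'(0)=4\pi i\,\partial_\tau\log\theta_1'(0)=-24\pi^2D\log\eta(\tau)=-\pi^2E_2$, with $D=\frac{1}{2\pi i}\partial_\tau$.
\item In the product route, each factor $1-q^ne^{\pm2\pi iz}$ contributes exactly $4\pi^2q^n/(1-q^n)^2$ to the constant term of $(\log\theta_1)''$ at $0$. So the placeholder $(\dots)$ should simply be deleted. Then $\sum_{n\ge1}q^n/(1-q^n)^2=\sum_{N\ge1}\sigma_1(N)q^N$ gives $-\tfrac{\pi^2}{3}E_2$ exactly.
\end{itemize}

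There is one harmless normalization slip. With the sign $(-1)^n$ in \eqref{vartheta}, the terms $n=0$ and $n=-1$ combine to $2i\,q^{1/8}\sin\pi z$ (with $q=e^{2\pi i\tau}$), so this $\theta_1$ is $i$ times the classical one. Hence $\theta_1'(0)=2\pi i\,\eta(\tau)^3$, and the product formula and your $q\to0$ check carry the prefactor $2i\,q^{1/8}$ rather than $2q^{1/8}$. Only logarithmic derivatives enter, so nothing changes.

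Of the two routes, the heat-equation one is shorter but rests on Jacobi's formula $\theta_1'(0)\propto\eta^3$, which is normally proved from the triple product anyway. The product route needs only the triple product, so it is the more self-contained choice. It is also the one closest to the source the paper cites.
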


Let us give an example of a remarkable family of elliptic curves containing representatives of all classes in the moduli space of elliptic curves.

\begin{prop}\label{a_and_E}
Let
\begin{equation}\label{E_a}y^2 = x^3 + a_1(\tau)x^2 + a_2(\tau)x + a_3(\tau)\end{equation}
be a one-parameter family of elliptic curves $\E_\tau$, given in the chart $\{z=1\}$ on $\CP^2$ with coordinates $x, y, z$, where
\begin{equation}\label{a_and_E_2} a_1(\tau) \;=\; -\pi^2E_2(\tau),\quad a_2(\tau) \;=\;4\pi^4E_2'(\tau),\quad a_3(\tau) \;=\; -\frac{8}{3}\pi^6E_2''(\tau)\end{equation}
and primes denote the scaled derivatives $(-)'=1/(2\pi i)\dd/\dd\tau$. Then the family $\E_\tau$ realizes representatives of all elements of the moduli space of elliptic curves over $\Cc$. 
\end{prop}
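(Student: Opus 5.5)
Substituting the Ramanujan identities \eqref{Ramanujan} into \eqref{a_and_E_2} will express $a_1,a_2,a_3$ through $E_2,E_4,E_6$. Differentiating $E_2'=(E_2^2-E_4)/12$ and using the formula for $E_4'$ gives
\[
E_2''=\frac{2E_2E_2'-E_4'}{12}=\frac{E_2^3-3E_2E_4+2E_6}{72}.
\]
This yields
\[
a_1=-\pi^2E_2,\quad a_2=\frac{\pi^4}{3}(E_2^2-E_4),\quad a_3=-\frac{\pi^6}{27}(E_2^3-3E_2E_4+2E_6).
\]

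The plan is to remove the $x^2$ term by the shift $x=X-a_1/3=X+\tfrac{\pi^2}{3}E_2$. The depressed cubic $X^3+pX+q$ has
\[
p=a_2-\frac{a_1^2}{3},\qquad q=a_3-\frac{a_1a_2}{3}+\frac{2a_1^3}{27}.
\]
Plugging in, the $E_2$-dependence should cancel, because the shift is precisely the quasimodular correction that appears in Proposition \ref{alpha}. Concretely, the expected outcome is
\[
p=\frac{\pi^4}{3}E_2^2-\frac{\pi^4}{3}E_4-\frac{\pi^4}{3}E_2^2=-\frac{\pi^4}{3}E_4 .
\]
Similarly, the cubic terms in $E_2$ in $q$ should combine as $-\tfrac{1}{27}+\tfrac{1}{9}-\tfrac{2}{27}=0$, the $E_2E_4$ terms as $\tfrac{3}{27}-\tfrac{1}{9}=0$, and what remains is $q=-\tfrac{2\pi^6}{27}E_6$. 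This bookkeeping is the only computational step. It is where I expect the main (modest) obstacle, namely keeping the normalizations consistent, so I would verify it twice.

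Next, rescale with $X=\tilde x$ and $y=\tilde y/2$. The curve becomes
\[
\tilde y^2=4\tilde x^3-\frac{4\pi^4}{3}E_4\,\tilde x-\frac{8\pi^6}{27}E_6=4\tilde x^3-g_2(\tau)\tilde x-g_3(\tau),
\]
which is exactly the Weierstrass curve \eqref{ellitpic_curve_equation} of the lattice $\Z+\tau\Z$. Hence $\E_\tau\cong\Cc/(\Z+\tau\Z)$, and the discriminant $\delta_{\bm a}$ is a nonzero constant times $g_2^3-27g_3^2$, which does not vanish on $\Hh$, so every $\E_\tau$ is smooth.

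Finally, every complex elliptic curve is isomorphic to $\Cc/(\Z+\tau\Z)$ for some $\tau\in\Hh$, equivalently the $j$-invariant $j(\tau)=1728\,g_2^3/(g_2^3-27g_3^2)$ maps $\Hh$ onto $\Cc$. It follows that the family $\E_\tau$ meets every isomorphism class in the moduli space, which proves the proposition.
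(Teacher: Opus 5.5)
Your proposal is correct and follows essentially the paper's route. The paper also shifts by $\alpha=-\frac{\pi^2}{3}E_2$ and rescales to identify $\E_\tau$ with $\tilde y^2=4\tilde x^3-g_2\tilde x-g_3$ for the lattice $\Z+\tau\Z$; it only runs the Ramanujan bookkeeping in the opposite direction, from $(\alpha,g_2,g_3)$ to $(a_1,a_2,a_3)$ via $E_4=E_2^2-12E_2'$ and $E_6=E_2^3-18E_2E_2'+36E_2''$. Your check that the discriminant is a nonzero multiple of $g_2^3-27g_3^2\neq 0$ on $\Hh$ is a small addition that the paper leaves implicit.
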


\begin{proof}

Rewrite the equation of the curve $\E$ in the form of a Weierstrass model $\widetilde{y}^2=\widetilde{x}^3-g_2\widetilde{x}-g_3$. To do this, consider a shift $\alpha$ such that equation \eqref{E_a} takes the form:
\begin{equation}\label{form_of_E}y^2=(x+\alpha)^3-\frac{g_2}{4}(x+\alpha)-\frac{g_3}{4}.\end{equation}
Then
\begin{equation}\label{E_g_2_and_g_3}a_1=3\alpha,\quad a_2=3\alpha^2-\frac{g_2}{4},\quad a_3=\alpha^3-\frac{g_2\alpha}{4}-\frac{g_3}{4}. \end{equation}
Let us check that one can take the following expressions for $\alpha$, $g_2$, and $g_3$:
\begin{equation}\label{alpha_g_2_g_3}\alpha(\tau) = -\frac{\pi^2}{3}E_2(\tau),\quad g_2(\tau) = \frac{4\pi^4}{3}E_4(\tau),\quad g_3(\tau) = \frac{8\pi^6}{27}E_6(\tau).\end{equation}
Here $g_2$ and $g_3$ denote the classical Weierstrass invariants associated with the lattice $\Lambda = \Z+\tau\!\Z$ for some $\tau\in\Hh$.

The Weierstrass function $\wp(z;\tau)$ satisfies the differential equation
\begin{equation}\label{Weierstrass}\left(\frac{\dd}{\dd z}\wp(z;\tau)\right)^2=4\wp^3(z;\tau)-g_2(\tau)\wp(z;\tau)-g_3(\tau).\end{equation}
As is well known, when $\tau$ ranges over the upper half-plane $\Hh$, the pair $(\wp(z;\tau), \wp'_z(z;\tau))$ gives parametrizations of representatives of all isomorphism classes of elliptic curves. By Proposition \ref{alpha}, one may take $x = -\dfrac{\dd^2}{\dd z^2}\log \theta_1(z;\tau)$. By successively eliminating $E_4$ and $E_6$ from the first two Ramanujan identities \eqref{Ramanujan}, we obtain the equalities:
\begin{equation}\label{E_4_E_6}E_4 = E_2^2-12E_2',\quad\quad E_6 =E_2^3-18E_2E_2'+36E_2''.\end{equation}
Substituting formulas \eqref{E_4_E_6} into \eqref{alpha_g_2_g_3}, and then formulas \eqref{alpha_g_2_g_3} into \eqref{E_g_2_and_g_3}, we obtain formulas \eqref{a_and_E_2}. Thus, the proposition is proved.
\end{proof}

\section{Universal Symmetric 2-Algebraic 2-Valued Group}\label{Universal_symmetric_group}

Let us recall the following (see details in \cite{BK_disciminants}).

\begin{defi}
{An algebraic $n$-valued monoid} is an algebraic variety $X$ equipped with an associative $n$-valued multiplication defined by a rational morphism $X\times X\to \Sym^n(X)$ with a neutral element (unit) $e\in X$, i.e., $x\ast e$, $e\ast x$ exist and the equalities $x\ast e = e\ast x = [x, x, ..., x]$ hold for any $x\in X$. 

{\it An algebraic $n$-valued group} is an algebraic $n$-valued monoid on $X$ equipped with a rational inversion morphism $\inv: X\to X$ for which, whenever $\inv(x)$ exists, the following conditions hold: $x\ast\inv(x)$ and $\inv(x)\ast x$ exist, and $e\in x\ast\inv(x)$, $e\in \inv(x)\ast x$. 

An algebraic $n$-valued monoid (or group) on $X$ is called {\it regular} if the $n$-valued multiplication $X\times X\to \Sym^n(X)$ is defined on the whole variety $X\times X$ (and the map $\inv$ is defined everywhere on $X$ in the case of a group).
\end{defi}

The coset construction for the group of points on cubic curves provides a source of nontrivial examples of algebraic $n$-valued monoids and groups (see \cite{BK_disciminants}).

In \cite{BK}, the following remarkable class of algebraic $n$-valued groups on $\Cc$ was introduced:

\begin{defi}\label{sym_nal_g_nval_group}
A {\it symmetric $n$-algebraic $n$-valued group on $\Cc$} is an algebraic $n$-valued group $\G_{\Cc}(P)$ with multiplication
$$x\ast y = [z\mid P(z; x, y) = 0],$$
defined by a symmetric polynomial $P(z;(-1)^nx,(-1)^ny)$ in which each variable has degree at most $n$, with identity element $0$ and inverse map $\inv(x) = (-1)^nx$.
\end{defi}

Every symmetric polynomial of degree at most $2$ in each of the variables $x$, $y$, and $z$, satisfying conditions \eqref{polynom_P}--\eqref{G(P)_inverse_element}, is of the form
\begin{equation}\label{P_2_universal}P_2(z; x, y) =  e_1^2 - 4 e_2 + k_2 e_3 + k_8 e_3^2 + k_4 e_1 e_3 + k_6 e_2 e_3,\end{equation}
where $k_2$, $k_4$, $k_6$, and $k_8$ denote (complex) parameters.

As was shown in \cite[Theorem 6.3]{Buchstaber_Veselov19} and \cite[Theorem 1]{BK}, the polynomial $P_2(z; x, y)$ defines on $\Cc$ the structure of a symmetric $2$-algebraic two-valued group with identity element $0$ and inverse map $\inv(x) = x$ if and only if the equality
\begin{equation}\label{assoc} 4k_8 = k_4^2 - k_6k_2 \end{equation}
holds. Thus, equality \eqref{assoc} is equivalent to the associativity condition
$$(x\ast y)\ast z = x\ast (y\ast z)$$
for the multiplication
$$x\ast y = [z\mid P_2(z; x, y) = 0].$$

The family of polynomials \eqref{P_2_universal} with coefficient ring
$$R_0 := \Z[k_2,k_4,k_6,k_8]/(4k_8 - k_4^2 + k_6k_2)$$
is universal (initial) in the class of all polynomials defining the structure of a symmetric $2$-algebraic two-valued group on a $\Z[1/2]$-module \cite{Buchstaber75, Buchstaber}. In the present paper, we consider only complex specializations of the parameters $k_2$, $k_4$, $k_6$, and $k_8$. Namely, if $Q$ is any other polynomial defining on $\Cc$ the structure of a symmetric $2$-algebraic two-valued group with coefficient ring $R'$, then there exists a unique homomorphism $r: R_0\to R'$ such that $Q = r(P_2)$, where $r(P_2)$ denotes the action of the homomorphism $r$ on the coefficients of the polynomial $P_2$.

Thus, the polynomial
\begin{equation}\label{P_2_group_universal}P_2(z; x, y) =  e_1^2 - 4 e_2 + k_2 e_3 + \frac{k_4^2 - k_6k_2}{4} e_3^2 + k_4 e_1 e_3 + k_6 e_2 e_3,\end{equation}
with independent complex parameters $k_2$, $k_4$, and $k_6$, is universal in the class of all polynomials defining structures of symmetric $2$-algebraic two-valued groups.

\begin{prop}[\cite{Kontsevich_type_polynomials, BK_disciminants}]\label{coset_construction_on_elliptic_curves}
The universal symmetric $2$-algebraic two-valued group $\G_{\Cc}(P_2)$ is obtained by the coset construction from the group of points of the curve \eqref{E_a} with respect to the involution $(x,y)\mapsto (x,-y)$. In terms of the parameters $a_1$, $a_2$, and $a_3$ of the curve $\E$, the corresponding two-valued multiplication law can be written in the form of the {\it Buchstaber polynomial}
\begin{equation}\label{B_a}
B_{\bm{a}}(z;x,y)=(x+y+z-a_2xyz)^2-4(1+a_3xyz)(xy+yz+xz+a_1xyz).
\end{equation}
\end{prop}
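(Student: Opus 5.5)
The plan is to realize the two-valued law as a coset construction directly: parametrize the quotient $\E/\pm$ by a coordinate, write down the pair of sums $P\pm Q$, and read off the polynomial they satisfy. Associativity will then be inherited from the group law on $\E$, so the computation reduces to identifying the polynomial with $B_{\bm a}$ and checking the parameter match \eqref{k_and_a_intro}.

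\textbf{Step 1: coordinate on the quotient.} The quotient of $\E$ by $(x,y)\mapsto(x,-y)$ is parametrized by the function $x$. I would instead use $u=-1/x$, which places $\infty$ (the group zero) at $u=0$. For two points $P_1=(x_1,y_1)$ and $P_2=(x_2,y_2)$, the multiset $[x(P_1+P_2),\,x(P_1-P_2)]$ is well defined on classes $\{P,-P\}$. This gives a two-valued law on the $x$-line with neutral element $\infty$. Every element is self-inverse, since $P-P=0$ lies in the product.

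\textbf{Step 2: the polynomial.} Take the line $y=mx+b$ through $P_1,P_2$. The chord rule gives
\[
x_3 = m^2 - a_1 - x_1 - x_2, \qquad m = \frac{y_1-y_2}{x_1-x_2},
\]
and the line through $P_1,-P_2$ gives the second root. I would use Vieta for the pair $\{x(P_1+P_2),x(P_1-P_2)\}$:
\begin{itemize}
\item their sum and product are symmetric in $y_1\mapsto\pm y_1$;
\item after eliminating $y_i^2$ via the curve equation, they become rational in $x_1,x_2$.
\end{itemize}
The classical outcome (the Weierstrass addition formula for $\wp(u+v)+\wp(u-v)$ and $\wp(u+v)\wp(u-v)$) is the Kontsevich-type polynomial $D_{\bm a}(x_3;x_1,x_2)$. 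It is quadratic in each variable and symmetric in all three, because $P_1+P_2+P_3=0$ is symmetric.

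\textbf{Step 3: change of variables.} Substitute $x_i=-1/u_i$ and clear denominators by multiplying by $u_1^2u_2^2u_3^2$. I would then verify that the result equals \eqref{B_a} in $u$. Expanding \eqref{B_a} in elementary symmetric functions gives \eqref{B_a_e_0}. Comparing with \eqref{P_2_group_universal} via \eqref{k_and_a_intro}, one checks directly that
\[
k_8 = a_2^2-4a_1a_3 = \frac{k_4^2-k_6k_2}{4}.
\]
So the law satisfies \eqref{assoc}, consistent with associativity inherited from $\E$. The map $(a_1,a_2,a_3)\mapsto(k_2,k_4,k_6)$ is a bijection over $\Cc$, so every member of the universal family arises this way. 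When the discriminant vanishes, the same formulas hold for the singular cubic.

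The main obstacle is the explicit elimination in Step 2: showing that the symmetric expressions in $y_1,y_2$ reduce to polynomials whose homogenized form is exactly $B_{\bm a}$. I would avoid it by a uniqueness shortcut. The product law on $\E/\pm$ is given by a symmetric polynomial that is quadratic in each variable and satisfies the neutral-element normalization \eqref{G(P)_neutral_element}. By the classification preceding \eqref{P_2_universal}, this polynomial must have the form \eqref{P_2_universal}. It then suffices to determine $k_2,k_4,k_6$ from low-order terms, which means expanding near $u=0$ using the local parameter $u\approx -x/y$ to first orders. Alternatively, I would check finitely many special configurations, for example $P_2$ a $2$-torsion point, where $x(P_1+P_2)=x(P_1-P_2)$ and the discriminant of $B_{\bm a}$ in $z$ must vanish.
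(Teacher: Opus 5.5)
Your route is the paper's: coset construction on $\E$, the quadratic law in the curve coordinate from the chord rule (which the paper quotes from \cite{BK_disciminants}), then inversion of the coordinate. However, Step 3 fails as you set it up, because the sign in $u=-1/x$ is wrong.

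Carrying out your Step 2 for $y^2=x^3+a_1x^2+a_2x+a_3$ gives the law
\[
(s^2-4p)\,x_3^2-2\bigl(ps+2a_1p+a_2s+2a_3\bigr)x_3+(p-a_2)^2-4a_3s-4a_1a_3,\qquad s=x_1+x_2,\ p=x_1x_2 .
\]
Equivalently, it is $\hat e_2^2-4\hat e_1\hat e_3-4a_1\hat e_3-2a_2\hat e_2-4a_3\hat e_1+a_2^2-4a_1a_3$, where $\hat e_k$ are the elementary symmetric functions of $(x_1,x_2,x_3)$. Put $x_i=\varepsilon/u_i$ with $\varepsilon=\pm1$ and multiply by $(u_1u_2u_3)^2$. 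This yields
\[
e_1^2-4e_2-4\varepsilon a_1e_3-2a_2e_1e_3-4\varepsilon a_3e_2e_3+(a_2^2-4a_1a_3)e_3^2,
\]
with $e_k$ now elementary symmetric in $(u_1,u_2,u_3)$. Only $\varepsilon=+1$ reproduces \eqref{B_a_e}. Your choice $\varepsilon=-1$ gives $B_{(-a_1,a_2,-a_3)}$, i.e.\ $k_2=4a_1$ and $k_6=4a_3$, contradicting \eqref{k_and_a_intro}. The correct coordinate is therefore $u=1/x$.

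The paper's ``$x\mapsto-1/x$'' is applied to $D_{\bm a}(z;\zeta_1,\zeta_2):=D(-z;-\zeta_1,-\zeta_2)$, where $D$ is the law in the curve coordinate. That definition already absorbs one sign, so the net substitution is $\zeta=1/x$. You took $D_{\bm a}$ to be the law in the curve coordinate itself and lost that sign. Since $\bm a\mapsto(-a_1,a_2,-a_3)$ is a bijection, your version still shows that every $\G_{\Cc}(P_2)$ comes from some curve of the form \eqref{E_a}. It does not give the stated formula in terms of the parameters of $\E$.

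Your fallback checks would expose this rather than confirm it. The coordinate on $\E/\pm$ must be even in the uniformizer $t=-x/y$ at $\infty$; in fact $1/x=t^2+a_1t^4+O(t^6)$. So ``$u\approx -x/y$'' is wrong, and a correct low-order expansion with $u=-1/x$ produces $k_2=+4a_1$.

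The $2$-torsion test also fails. One has $\mathrm{Discr}_z B_{\bm a}(z;u_1,u_2)=16\,u_1u_2\,q(u_1)q(u_2)$ with $q(w)=1+a_1w+a_2w^2+a_3w^3$. The zeros of $q$ are exactly the images of the nontrivial $2$-torsion points under $u=1/x$. Under $u=-1/x$ those points go to the zeros of $q(-w)$ instead.

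With $u=1/x$ the argument goes through. Your remaining points are fine: associativity inherited from $\E$, the check $k_8=(k_4^2-k_6k_2)/4$, and bijectivity of $\bm a\mapsto(k_2,k_4,k_6)$.
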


\begin{proof}

Let complex parameters $\alpha$, $g_2$, and $g_3$ be such that the equation of the curve $\E$ can be rewritten in the form
\begin{equation}\label{a1a2a3_system}\eta^2 = (\zeta + \alpha)^3 -\frac{g_2}{4}(\zeta + \alpha) - \frac{g_3}{4},\qquad
\begin{cases}
a_{1}= & 3\alpha,\\
a_{2}= & 3\alpha^{2}-g_{2}/4,\\
a_{3}= & \alpha^{3}-g_{2}\alpha/4-g_{3}/4.
\end{cases}\end{equation}
Recall that the addition law on this curve is given by the formulas (for distinct points of the curve $\E$):
\begin{equation}\label{sum_formula_x3}(\zeta_1, \eta_1)\oplus (\zeta_2, \eta_2) = (\zeta_3, \eta_3),\qquad\left\{\begin{aligned} \zeta_3 &= -\zeta_1 - \zeta_2 - 3\alpha + \left(\frac{\eta_1 - \eta_2}{\zeta_1 - \zeta_2}\right)^2,\\ \eta_3 &= (\zeta_1 - \zeta_3)\cfrac{\eta_1 - \eta_2}{\zeta_1 - \zeta_2} - \eta_1.\end{aligned}\right.\end{equation}
In the case when the summands coincide, equality \eqref{sum_formula_x3} should be understood in the sense of the limit as $\zeta_2\to \zeta_1$.

There is a branched double covering
\begin{equation}\label{branched_covering}\pi: \E\to \Cc\!P^1,\end{equation}
defined in the chart $\{z = 1\}$ by $\pi(\zeta, \eta) = \zeta$ and $\pi(\infty) = \infty$, with branch points at the roots of the polynomial $\zeta^3 + a_1\zeta^2 + a_2\zeta + a_3$ and at $\infty$. The fibers of the map $\pi$ are in bijective correspondence with the points of the orbit space $\E/\langle \sigma \rangle$ with respect to the involution
\begin{equation*}\begin{aligned}\label{involution_sigma}\sigma:(\zeta,\eta)&\mapsto (\zeta,-\eta),\\ \infty&\mapsto \infty.\end{aligned}\end{equation*}
Applying the coset construction \cite[Theorem 1]{Buchstaber} for the involution $\sigma$ on the group of points of the elliptic curve, we obtain a structure $\E_{\langle\sigma \rangle}$ of a coset algebraic $2$-valued group on $\Cc\!P^1$ with identity element at the point $\infty$:
\begin{equation}\label{x1_x2_2-valued}\zeta_1 \ast \zeta_2 = \left [-\zeta_1 - \zeta_2 - 3\alpha + \left(\frac{\eta_1 \pm \eta_2}{\zeta_1 - \zeta_2}\right)^2\right ].\end{equation}

The values of the expression {\normalfont{(\ref{x1_x2_2-valued})}} are roots of the quadratic polynomial $D(z; \zeta_1, \zeta_2)$ in the variable $z$ \cite[Proposition 7]{BK_disciminants}:
\begin{equation}\label{D_polynomial}D(z; \zeta_1, \zeta_2) = \Theta_0(\zeta_1, \zeta_2)z^2 + \Theta_1(\zeta_1, \zeta_2)z + \Theta_2(\zeta_1, \zeta_2),\end{equation}
where
$$\Theta_0=16(\zeta_1-\zeta_2)^2,$$
$$\Theta_1=8\bigl(2g_3+g_2(\zeta_1+\zeta_2+2\alpha)-4\bigl(\zeta_1\zeta_2(\zeta_1+\zeta_2)+6\zeta_1\zeta_2\alpha+3(\zeta_1+\zeta_2)\alpha^2+2\alpha^3\bigr)\bigr),$$
$$
\begin{aligned}
\Theta_2&=(g_2+4\zeta_1\zeta_2)^2+16g_2(\zeta_1+\zeta_2)\alpha
+24\bigl(g_2-4\zeta_1\zeta_2\bigr)\alpha^2\\
&\quad-64(\zeta_1+\zeta_2)\alpha^3-48\alpha^4
+16g_3(\zeta_1+\zeta_2+3\alpha).
\end{aligned}
$$

The polynomial $D_{\bm{a}}(z;\zeta_1,\zeta_2):=D(-z; -\zeta_1, -\zeta_2)$ was called the generalized Kontsevich polynomial in \cite{Kontsevich_type_polynomials}.

The substitution $x\mapsto -1/x$, $y\mapsto -1/y$, $z\mapsto -1/z$, together with formulas \eqref{a1a2a3_system}, transforms this polynomial into
$$B_{\boldsymbol{a}}(z; x, y) = (xyz)^2D_{\boldsymbol{a}}(-1/z; -1/x, -1/y)$$
and sends the group $\E_{\langle\sigma\rangle}$ to the group $\G_{\CP^1}(B_{\bm{a}})$.

\end{proof}

In terms of the elementary symmetric functions $e_1$, $e_2$, and $e_3$ in the variables $x$, $y$, and $z$, the polynomial $B_{\bm{a}}(z; x,y)$ takes the form
\begin{equation}\label{B_a_e}B_{\bm{a}}(z;x,y)=e_1^2-4e_2-4a_1e_3-2a_2 e_1 e_3-4a_3 e_2 e_3+(a_2^2-4a_1 a_3)e_3^2.\end{equation}

Comparing formulas \eqref{B_a_e} and \eqref{P_2_universal}, we see that
\begin{equation}\label{k_and_a}k_2 = -4a_1,\quad k_4 = -2a_2,\quad k_6 = -4a_3, \quad k_8=a_2^2-4a_1a_3.\end{equation}

Note that the family of coefficients $k_2$, $k_4$, $k_6$, and $k_8$ often appears in the theory of elliptic curves. Let $\E$ be an elliptic curve given by the equation
$$\eta^2+\alpha_1\zeta\eta+\alpha_3\eta = \zeta^3+\alpha_2\zeta^2+\alpha_4\zeta+\alpha_6.$$
The expressions
\begin{align}\label{b-invariants}b_2 = \alpha_1^2 &+ 4\alpha_2,\quad b_4 = \alpha_1\alpha_3 + 2\alpha_4, \quad b_6 = \alpha_3^2 + 4\alpha_6,\\ b_8&=\alpha_1^2\alpha_6-\alpha_1\alpha_3\alpha_4+4\alpha_2\alpha_6+\alpha_2\alpha_3^2-\alpha_4^2\nonumber \end{align}
are called in \cite[Chapter III, Section 3.1]{Cremona_92} ``auxiliary values''. In terms of these values, one writes the invariants $c_4$ and $c_6$, the discriminant $\Delta$, and the $j$-invariant of the elliptic curve $\E$:
$$c_4 = b_2^2-24b_4,\quad c_6 = -b_2^3 + 36b_2b_4-216b_6, \quad \Delta = -b_2^2b_8 - 8b_4^3 - 27b_6^2 + 9b_2b_4b_6,\quad j = c_4^3/\Delta.$$
Moreover, the relations
\begin{equation} 4b_8 = b_2b_6-b_4^2\quad \text{and}\quad 1728\Delta=c_4^3-c_6^2 \end{equation}
hold. It is easy to see that the notations in \eqref{b-invariants} and \eqref{k_and_a} are related as follows:
$$\begin{aligned}\alpha_1=\alpha_3=0,\quad &\alpha_2=-a_1,\quad \alpha_4=-a_2,\quad \alpha_6=-a_3,\\ b_2 &= k_2,\quad b_4=k_4,\quad b_6=k_6,\quad b_8=-k_8. \end{aligned}$$

\section{Modulus Square Construction and Buchstaber Genus}\label{modulus_square_construction}

As we saw, a complete description of the universal family of symmetric $2$-algebraic two-valued groups is obtained by purely algebraic means via the polynomial \eqref{P_2_universal} with free parameters $k_2$, $k_4$, and $k_6$. Proposition \ref{coset_construction_on_elliptic_curves} shows that this universal polynomial arises from a certain geometric construction based on elliptic curves. However, one may naturally ask how this geometric picture emerges from the general theory of formal two-valued groups developed by Buchstaber in \cite{Buchstaber75}. This question was answered in \cite{Kornev26}. In this section, following that work, we recall the modulus square construction and the Buchstaber genus. 

In \cite{Buchstaber_Novikov}, a modulus square construction and the corresponding two-valued formal group in the complex cobordism ring $\Omega_{\Uu}$ were introduced. The general theory of two-valued formal groups was developed in \cite{Buchstaber75}. For the definition, examples, and properties of $n$-valued groups, see the survey \cite{Buchstaber}, or recent works \cite{BK, BK_disciminants}. Let us recall the {\it modulus square construction}. 

Let $\zeta_1 = \pr_1^{\ast}(\zeta)$ and $\zeta_2 = \pr_2^{\ast}(\zeta)$ be quaternionic line bundles, where $\zeta$ is the universal quaternionic line bundle over $\HP^{\infty}$, and $\pr_j:\HP^{\infty}\times\HP^{\infty}\to \HP^{\infty}$ are two projections onto the Cartesian factors. The embedding $\Sss^1\hookrightarrow\Sp(1)$ of the multiplicative circle group $\Sss^1$ into the group of unit quaternions $\Sp(1)$ induces a map
\begin{equation}\label{iota_map}
\iota:\Cc\!P^{\infty}\to\HP^{\infty}
\end{equation}
between their classifying spaces. The pullback of the bundle $\zeta_j$ ($j = 1,2$) under the map $\iota$ decomposes as a sum $\eta_j\oplus\overline{\eta}_j$, where $\eta_j$ is the universal complex line bundle. We have the following pullback diagram:
$$
\xymatrix{
(\eta_1\oplus\overline{\eta}_1)\otimes_{\Cc}(\eta_2\oplus\overline{\eta}_2)\pullbackcorner\ar[rr]\ar[d] & & \zeta_1\otimes_{\Cc}\zeta_2\ar[d]\\
\CP^{\infty}\times\CP^{\infty}\ar[rr]^{\iota\times \iota} & & \HP^{\infty}\times\HP^{\infty}
}
$$

There is an isomorphism of complex vector bundles: $$(\eta_1\oplus\overline{\eta}_1)\otimes_{\Cc}(\eta_2\oplus\overline{\eta}_2) \cong (\eta_1\eta_2\oplus\overline{\eta}_1\overline{\eta}_2)\oplus(\eta_1\overline{\eta}_2\oplus\overline{\eta}_1\eta_2).$$ Note that the bundles $\xi_1:= \eta_1\eta_2\oplus\overline{\eta}_1\overline{\eta}_2$ and $\xi_2 :=\eta_1\overline{\eta}_2\oplus\overline{\eta}_1\eta_2$ admit quaternionic structures. 

Let $\Pic(M)$ be the Picard group of isomorphism classes $[\xi]$ of complex line bundles over a complex manifold $M$. Consider an involution $\sigma: [\xi]\mapsto[\overline{\xi}]$. By abuse of notation, we will write $\xi$ instead of $[\xi]$. The points of the corresponding orbit space $X:=\Pic(M)/\sigma$ are identified with unordered pairs $[\xi, \overline{\xi}]$. We get the 2-valued coset group (see \cite[Section 6]{Buchstaber} for the definition) with multiplication $$[\xi,\overline{\xi}]\ast[\eta,\overline{\eta}] = [[\xi\eta,\overline{\xi}\overline{\eta}], [\xi\overline{\eta}, \overline{\xi}\eta]]$$ and neutral element $[\mathbf{1}_{\Cc}, \mathbf{1}_{\Cc}]$, where $\mathbf{1}_{\Cc}$ denotes a class of a trivial complex line bundle.

Note that $\zeta_1\otimes_{\Cc}\zeta_2$ cannot admit a quaternionic structure (an exercise in representation theory). But the bundle $\zeta_1\otimes_{\Cc}\zeta_2\otimes_{\Cc}\zeta_3$ over $\HP^{\infty}\times\HP^{\infty}\times\HP^{\infty}$ does admit one. A natural isomorphism $$(\zeta_1\otimes_{\Cc}\zeta_2)\otimes_{\Cc}\zeta_3\cong \zeta_1\otimes_{\Cc}(\zeta_2\otimes_{\Cc}\zeta_3)$$ is equivalent to the associativity of the two-valued operation $\ast$. 

This two-valued group is a geometric realization of a more general construction: 

\begin{prop}\label{modulus_square_prop}
Let $G$ be an abelian group and $\sigma: g\mapsto -g$ an involution. Denote by $X$ the orbit space $G/\sigma$ with points $[g, -g]$. Then $X$ carries an involutive commutative two-valued coset group structure with operation
$$
[g, -g]\ast[h,-h] = [[g+h, -g-h], [g-h, -g+h]]
$$ with neutral element $[e, e]$, and $e$ is a unit.
\end{prop}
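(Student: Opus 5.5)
We verify the axioms of a two-valued coset group directly, following the general coset construction \cite[Section 6]{Buchstaber}. There the multiplication on the orbit space $G/\langle\sigma\rangle$ is defined by $[g]\ast[h] = [\,\pi(g'+h') : g'\in\pi^{-1}[g],\ h'\in\pi^{-1}[h]\,]$, counted with the appropriate multiplicity. The plan is to check that this rule reproduces the stated formula and then to verify well-definedness, commutativity, the unit, the inverse, and associativity in turn.

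First, well-definedness. The four sums $\pm g\pm h$ split into the two $\sigma$-orbits $[g+h,-g-h]$ and $[g-h,-g+h]$. Replacing $g$ by $-g$ or $h$ by $-h$ merely permutes these two orbits, so the unordered pair $[[g+h,-g-h],[g-h,-g+h]]$ depends only on the classes $[g,-g]$ and $[h,-h]$. Degenerate cases, where $g=-g$ or $h=-h$ so that an orbit has a single element, are absorbed by taking the multiset of images of all four sums and halving the multiplicities. This is exactly the coset recipe, and the halving is possible because each orbit is hit an even number of times.

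Next, the unit, inverse, commutativity, and involutivity. Commutativity follows since $[g-h,-g+h]=[h-g,-h+g]$. For the unit, $[g,-g]\ast[e,e]=[[g,-g],[g,-g]]$, so $[e,e]$ is a two-sided unit. For the inverse, take $\inv=\id$; then $[g,-g]\ast[g,-g]$ contains $[g-g,-g+g]=[e,e]$. This makes the group involutive.

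The main step is associativity. Both $([g]\ast[h])\ast[k]$ and $[g]\ast([h]\ast[k])$ expand to the four-element multiset $\{[\epsilon_1 g+\epsilon_2 h+\epsilon_3 k] : \epsilon_i=\pm1\}/\pm$, which one can write as $[g+h+k],[g+h-k],[g-h+k],[g-h-k]$. This uses associativity and commutativity of $G$ together with the fact that the sign of the first summand can be normalised. The only delicate point, and the one I expect to be the obstacle, is bookkeeping of multiplicities when some elements are $2$-torsion. I would handle it uniformly by working with the formal sums $\frac14\sum_{\epsilon}[\epsilon_1g+\epsilon_2h+\epsilon_3k]$ in the free abelian group on $X$ (multiplicities times orbit sizes). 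Equality of the two expansions is then literally the same signed sum over $\epsilon\in\{\pm1\}^3$, which is symmetric in the grouping.
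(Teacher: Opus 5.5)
Your proposal is correct and follows the paper's route. The paper simply cites the general coset construction of \cite[Section 6]{Buchstaber} for the automorphism group $\{1,\sigma\}$, and you unpack that construction and check the axioms by hand.

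One small remark: the 2-torsion worry is moot. Once $\ast$ is well defined on $X$, both bracketings are literally the same four-element multiset $[[g+h+k],[g+h-k],[g-h+k],[g-h-k]]$. In your formal-sum version the normalizing factor should be $\tfrac12$ rather than $\tfrac14$, because the eight terms pair off under $\epsilon\mapsto-\epsilon$.
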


\begin{proof}
 This follows directly from the coset construction, see \cite[Section 6]{Buchstaber}.
\end{proof}

\begin{defi}
The two-valued group considered in Proposition \ref{modulus_square_prop} is called the {\it modulus square construction} for an abelian group $G$. 
\end{defi}

The modulus square construction has the following infinitesimal analogue. Let $F(u, v)$ be a commutative formal group over an algebra $A$ with logarithm $g(u)$ and exponential $f(u)$ (they are defined over $A\otimes \Q$). Let $x = u\overline{u}$, $y = v\overline{v}$. Then, by definition, a {\it two-valued formal group} $\G_F$ is given by the law \begin{equation}\label{two-valued_law}x\ast y = [F(u,v) F(\overline{u}, \overline{v}), F(u, \overline{v})F(\overline{u}, v)],\end{equation} neutral element $x = 0$ and inverse $\inv(x) = x$.

Define the following formal series: \begin{equation}\label{Theta_1_and_Theta_2_z_1_and_z_2}\begin{matrix}z_1 = F(u, v)F(\overline{u}, \overline{v}), & & z_2 = F(u,\overline{v})F(\overline{u}, v)\\ \Psi_1 = z_1 + z_2, & & \Psi_2 = z_1z_2. \end{matrix}\end{equation} One can check that these series belong to the ring $A[[x, y]]$, i.e. $\Psi_1 = \Psi_1(x, y)$ and $\Psi_2 = \Psi_2(x, y)$, see \cite[Lemma 2.21]{Buchstaber_Novikov}. Hence, we get the following

\begin{prop}
The two-valued formal group $\G_F$ is determined by the roots $z_1,z_2$ of a quadratic polynomial over the ring $A[[x,y]]$ $($see \eqref{Theta_1_and_Theta_2_z_1_and_z_2} for the notation$)$$:$ \begin{equation}\label{Theta_1_and_Theta_2_general}\begin{aligned}z^2 - \Psi_1(x, y)&z+\Psi_2(x, y) = 0,\\ x\ast y &= [z_1, z_2].\end{aligned}\end{equation}
\end{prop}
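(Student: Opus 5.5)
The statement to prove is that the two-valued formal group $\G_F$ is the root multiset of $z^2-\Psi_1(x,y)z+\Psi_2(x,y)$ over $A[[x,y]]$. The substantive part is showing that $\Psi_1=z_1+z_2$ and $\Psi_2=z_1z_2$ are power series in $x=u\overline{u}$ and $y=v\overline{v}$ alone. Once that holds, Vieta's formulas give that $z_1,z_2$ are exactly the roots of the quadratic, which is \eqref{Theta_1_and_Theta_2_general}.

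\textbf{Step 1: symmetry of $\Psi_1,\Psi_2$.} Work over $A\otimes\Q$ in the ring $A[[u,\overline{u},v,\overline{v}]]$ and treat $u,\overline{u},v,\overline{v}$ as independent variables. Consider the group $W\cong(\Z/2)^2$ generated by the swaps $s_1\colon u\leftrightarrow\overline{u}$ and $s_2\colon v\leftrightarrow\overline{v}$. By commutativity of $F$:
\begin{itemize}
\item $s_1$ sends $z_1=F(u,v)F(\overline{u},\overline{v})$ to $F(\overline{u},v)F(u,\overline{v})=z_2$;
\item $s_2$ likewise exchanges $z_1$ and $z_2$.
\end{itemize}
Hence $\Psi_1$ and $\Psi_2$ are invariant under both swaps. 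They therefore lie in the subring of series symmetric in $(u,\overline{u})$ and separately symmetric in $(v,\overline{v})$. That subring is $A[[u+\overline{u},u\overline{u},v+\overline{v},v\overline{v}]]$, by the formal-power-series version of the fundamental theorem on symmetric functions applied to each pair.

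\textbf{Step 2: eliminating $u+\overline{u}$ and $v+\overline{v}$.} Symmetry alone does not remove the dependence on $u+\overline{u}$. Here I use the modulus square setting: $\overline{u}$ is the formal inverse of $u$, i.e. $\overline{u}=\overline{u}(u)$ with $F(u,\overline{u})=0$. This follows the geometry $[\xi,\overline{\xi}]$ and $c_1(\overline{\eta})=\overline{u}$. Then $u$ and $\overline{u}$ are roots of
\[
t^2-(u+\overline{u})t+u\overline{u}.
\]
The claim is that $u+\overline{u}$ is itself a power series in $x=u\overline{u}$. To see this, note that $u+\overline{u}$ is invariant under $u\mapsto\overline{u}$, which is an involution of $A[[u]]$ because $\overline{\overline{u}}=u$. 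The invariant subring of $A[[u]]$ under this involution is $A[[u\overline{u}]]$. This is the statement from \cite[Lemma 2.21]{Buchstaber_Novikov}, which I may invoke.

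To prove it directly, use the logarithm $g$, where $g(\overline{u})=-g(u)$. Any series that is even in $w=g(u)$ is a series in $w^2$. Since $u\overline{u}=f(w)f(-w)=-w^2(1+O(w))$, which is $-w^2$ times an even unit, $w^2$ is a series in $x$ by formal inversion. Applying this to both the $u$ and $v$ pairs puts $\Psi_1,\Psi_2$ into $A\otimes\Q[[x,y]]$.

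\textbf{Main obstacle: integrality.} Descending from $A\otimes\Q$ to $A$ is the delicate point. I would handle it by noting that $\overline{u}(u)$ and $F$ have coefficients in $A$. One then performs the symmetric-function reduction by a Weierstrass-division argument in $A[[u]]$ over $A[[x]]$: $A[[u]]$ is free over $A[[x]]$ with basis $1,u$, because $u^2-(u+\overline{u})u+x=0$ and $u+\overline{u}\in A[[x]]$. Invariants under the involution then have vanishing $u$-coefficient, so they lie in $A[[x]]$ without division. The proof finishes with Vieta's formulas.
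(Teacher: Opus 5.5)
You follow the same outer structure as the paper: first show $\Psi_1,\Psi_2\in A[[x,y]]$, then apply Vieta. The paper, however, does not prove that membership itself; it delegates it to the cited Buchstaber--Novikov lemma. Your Steps 1--2 give a self-contained algebraic replacement, and over $A\otimes\Q$ they are correct. The swaps $u\leftrightarrow\overline u$ and $v\leftrightarrow\overline v$ exchange $z_1$ and $z_2$, so $\Psi_i$ is a series in $u+\overline u$, $x$, $v+\overline v$, $y$. Moreover, $u+\overline u=f(w)+f(-w)$ is even in $w=g(u)$, while $x=f(w)f(-w)$ is $-w^2$ times an even unit; hence $u+\overline u\in(A\otimes\Q)[[x]]$. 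Even more directly, $z_1=h(g(u)+g(v))$ and $z_2=h(g(u)-g(v))$ with $h(s)=f(s)f(-s)$ even; this is the computation the paper uses right afterwards for $B(z_{1,2})$. What your route buys is transparency: it shows that only the formal-inverse involution and the logarithm are needed, rather than an external reference.

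The integrality paragraph, however, has a genuine flaw.

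\emph{Circularity.} You justify freeness of $A[[u]]$ over $A[[x]]$ on $\{1,u\}$ by $u+\overline u\in A[[x]]$. That is exactly the integral statement being proved. Freeness should instead come from Weierstrass division, since $x=-u^2\cdot(\text{unit of }A[[u]])$.

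\emph{The invariant-ring claim.} More seriously, invariance of $\alpha(x)+\beta(x)u$ under $u\mapsto\overline u$ only yields $\beta(x)(u-\overline u)=0$. But $u-\overline u=2u+\dots$ is a nonzerodivisor only when $2$ is a nonzerodivisor in $A$. Your claim that the invariants of $u\mapsto\overline u$ in $A[[u]]$ form $A[[u\overline u]]$ is false in general. For the additive law over $\mathbb{F}_2$ one has $\overline u=u$ and $x=u^2$, so every series is invariant.

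\emph{A correct repair.} The proposition itself survives with a different argument.
\begin{enumerate}
\item For torsion-free $A$, let $P=\sum c_kx^k\in(A\otimes\Q)[[x]]$ have $u$-expansion in $A[[u]]$. Since $x^k=(-1)^ku^{2k}+(\text{higher-order terms with coefficients in }A)$, induction on $k$ gives $c_k\in A$. Apply this to $u+\overline u$, then substitute into your Step 1.
\item For arbitrary $A$, apply the torsion-free case to the universal formal group over the Lazard ring, which is torsion-free. Then push forward along the classifying homomorphism; this works because $\overline u$, $x$, $z_1$, $z_2$, $\Psi_1$, $\Psi_2$ are all natural in $F$.
\end{enumerate}
The torsion-free case already covers the settings actually used in the paper, namely $\Omega_{\Uu}$ and $\Q$-algebras.
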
 

A general definition of a multi-valued formal group was given in \cite[Section 1]{Buchstaber75}.

\begin{defi}
In the notation of \eqref{Theta_1_and_Theta_2_z_1_and_z_2}, the series \begin{equation}\label{logarithm_of_two-vaued_group_2}B(x) = g(u)g(\overline{u}) = -g(u)^2\end{equation} is called a {\it logarithm} of the two-valued formal group $\G_F$ with the multiplication \eqref{two-valued_law}.
\end{defi}

The series $B(x)$ in \cite{Buchstaber_Novikov} was called the logarithm for the following reason: 

\begin{prop}
In the notation of \eqref{Theta_1_and_Theta_2_z_1_and_z_2}$\ and\ \eqref{logarithm_of_two-vaued_group_2}$$:$
$$
z_{1,2} = B^{-1}\left(\left(\sqrt{B\left(x\right)} \pm \sqrt{B\left(y\right)}\right)^2\right).
$$
\end{prop}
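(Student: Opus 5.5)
The plan is to compute everything through the logarithm $g$ of $F$. Write $u = f(s)$ and $\overline{u} = f(-s)$, so that $g(u) = s$ and $g(\overline{u}) = -s$. Here $\overline{u}$ is the formal inverse of $u$ in $F$, since the modulus square construction is modelled on $g\mapsto -g$ (Proposition~\ref{modulus_square_prop}). Then $x = u\overline{u} = f(s)f(-s)$ and
$$B(x) = g(u)g(\overline{u}) = -s^2.$$
Likewise $v = f(t)$, $\overline{v} = f(-t)$, $y = f(t)f(-t)$ and $B(y) = -t^2$.

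Next, $F(u,v) = f(s+t)$, because $F(f(a),f(b)) = f(a+b)$ by definition of the logarithm and exponential. Similarly $F(\overline{u},\overline{v}) = f(-s-t)$. Hence
$$z_1 = f(s+t)f(-(s+t)).$$
This is exactly the quantity $x$ built from the parameter $s+t$, so $B(z_1) = -(s+t)^2$. In the same way $z_2 = f(s-t)f(-(s-t))$ and $B(z_2) = -(s-t)^2$.

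It remains to express these in terms of $B(x)$ and $B(y)$. Choose the branches $\sqrt{B(x)} = is$ and $\sqrt{B(y)} = it$. Then
$$\Bigl(\sqrt{B(x)} \pm \sqrt{B(y)}\Bigr)^2 = -(s\pm t)^2 = B(z_{1,2}).$$
Applying $B^{-1}$ gives the claimed formula. Changing the sign of either square root only swaps $z_1$ and $z_2$, so the unordered pair $[z_1,z_2]$ is well defined.

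The main obstacle is making sense of $B^{-1}$ and of the square roots as formal series. One must check that $B$ is a series in $x$ over $A\otimes\Q$ with leading term $\pm x$, so that it is invertible.

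1. The function $x = f(s)f(-s)$ is an even series in $s$ starting with $-s^2$. So $x$ is a series in $\sigma = s^2$ of the form $-\sigma + O(\sigma^2)$, and it can be inverted to give $\sigma$ as a series in $x$.
2. Then $B(x) = -\sigma(x) = x + O(x^2)$, which is invertible.
3. The expression $(\sqrt{B(x)}\pm\sqrt{B(y)})^2 = B(x)+B(y)\pm 2\sqrt{B(x)B(y)}$ is a formal series in $\sqrt{x},\sqrt{y}$, so the identity is interpreted in $A\otimes\Q[[\sqrt{x},\sqrt{y}]]$.
4. The symmetric functions of the two values lie in $A\otimes\Q[[x,y]]$, consistent with $\Psi_1,\Psi_2\in A[[x,y]]$.
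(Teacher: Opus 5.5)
Your proof is correct and follows the same route as the paper. The paper also reduces the claim to $B(z_{1,2})=-(g(u)\pm g(v))^2$, via $V=g^{-1}(g(u)+g(v))$ and $B(V\overline{V})=-g(V)^2$, which is your computation with $s=g(u)$, $t=g(v)$; your additional bookkeeping (evenness of $f(s)f(-s)$ giving $B(x)=x+O(x^2)$ and hence invertibility of $B$, and the branch choice $\sqrt{B(x)}=is$, $\sqrt{B(y)}=it$) makes explicit what the paper leaves implicit.
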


\begin{proof}
We have
$$
B^{-1}((\sqrt{B(x)} \pm \sqrt{B(y)})^2) = B^{-1}(-(g(u)\pm g(v))^2).
$$
It is enough to check that
$$
B(z_{1,2}) = -(g(u)\pm g(v))^2.
$$
Consider the case of $z_1$ (the case of $z_2$ is completely analogous). We have
$$
B(z_1) = B(V\overline{V}) = -g(V)^2 = -(gg^{-1}(g(u) + g(v)))^2 = -(g(u) + g(v))^2,
$$
where $V = g^{-1}(g(u) + g(v))$. This is exactly what was required.
\end{proof}

Recall the topological applications of the group $\G_F$ developed in \cite{Buchstaber_Novikov}. Consider the formal group $F(u, v) = F_{\Uu}(u, v)$ of geometric complex cobordisms over the ring $\Omega_{\Uu} = \Uu^{\ast}(\pt)$ with the logarithm $g(u) = g_{\Uu}(u)$ and the exponential $f(u) = f_{\Uu}(u)$ (see \cite[Appendix 1]{Novikov67}). 

Introduce classes $z_1, z_2\in \Uu^4(\CP^{\infty}\times\CP^{\infty})$ as Chern classes in $\Uu$-theory:
$$
\begin{aligned}
z_1 &:=  c_2^{\Uu}(\eta_1\eta_2\oplus \overline{\eta}_1\overline{\eta}_2) = c_1^{\Uu}(\eta_1\eta_2)c_1^{\Uu}(\overline{\eta}_1\overline{\eta}_2) = F_{\Uu}(u, v)F_{\Uu}(\overline{u}, \overline{v}),\\
z_2 &:=  c_2^{\Uu}(\eta_1\overline{\eta}_2\oplus \overline{\eta}_1\eta_2) = c_1^{\Uu}(\eta_1\overline{\eta}_2)c_1^{\Uu}(\overline{\eta}_1{\eta}_2) = F_{\Uu}(u,\overline{v})F_{\Uu}(\overline{u}, v),
\end{aligned}
$$
where $u = c_1^{\Uu}(\eta_1)$, $v = c_1^{\Uu}(\eta_2)$, $\overline{u} = c_1^{\Uu}(\overline{\eta}_1)$, $\overline{v} = c_1^{\Uu}(\overline{\eta}_2)$.

Let
$$
x = \iota^{\ast}p_1^{\Sp}(\zeta_1) = u\overline{u}
\qquad \text{and} \qquad
y = \iota^{\ast}p_1^{\Sp}(\zeta_2) = v\overline{v}\in \Sp^4(\CP^{\infty}),
$$ where $p_1^{\Sp}(\zeta_j)\in\Sp^{4}(\HP^{\infty})$ denotes the Borel class in $\Sp$-theory, and $\iota$ is the map \eqref{iota_map}. 
Define series:
$$
\begin{aligned}
\Psi_1 :&=p_1^{\Sp}( \iota^{\ast}(\zeta_1\otimes_{\Cc}\zeta_2)) = z_1 + z_2,\\
\Psi_2 :&=\nonumber p_2^{\Sp}(\iota^{\ast}(\zeta_1\otimes_{\Cc}\zeta_2))  =z_1z_2.
\end{aligned}
$$

Let $z$ be a generator of $\Hhh^4(\HP^{\infty},\Z)$ such that $\iota^{\ast}z=-t^2$, where $t$ is a generator of $H^{2}(\CP^{\infty}, \Z)$. Let $\zeta$ be a universal quaternionic bundle over $\HP^{\infty}$, and $\eta$ be a universal complex bundle over $\CP^{\infty}$. As we already know, $x = \iota^{\ast}c_2^{\Uu}(\zeta) = c_1^{\Uu}(\eta)c_1^{\Uu}(\overline{\eta}) = u\overline{u}$. By abuse of notation, sometimes we will drop the map $\iota^{\ast}$. Then \cite[page 93]{Buchstaber_Novikov}, $\ch_{\Uu}(g(u)) = t$ and $\ch_{\Uu}(B(x)) = -t^2 = z$, where $\ch_{\Uu}(x)$ is a Chern–Dold character $$\ch_{\Uu}:\Uu^{\ast}(\HP^{\infty})\to  \Hhh^{\ast}(\HP^{\infty}, \Omega_{\Uu}\otimes\Q)\cong\Omega_{\Uu}\otimes\Q[z],$$ which was introduced in \cite{Buchstaber70}. Thus, the inverse power series $B^{-1}(z)$ called {\it exponential} coincides with the Chern character: $$B^{-1}(x) = \ch_{\Uu}(z)\in \Hhh^{\ast}(\HP^{\infty}, \Omega_{\Uu}\otimes\Q).$$ 

All the 2-valued formal groups of the form \eqref{Theta_1_and_Theta_2_general} were classified by Buchstaber in \cite{Buchstaber75}. Recall one of the main results of that paper:

 \begin{theorem}[{\cite[Theorem 6.4]{Buchstaber75}}]\label{two_valued_group_differential_equation}
Let
\begin{equation}\label{general_two-valued_law}
x\ast y = \{z\mid z^2 - \Psi_1(x, y)z + \Psi_2(x, y) = 0\}
\end{equation}
be an arbitrary two-valued formal group $\G(R)$ in formal power series over an arbitrary $\mathbb Q$-algebra $R$. Let $B(x)$ be its logarithm. Then $B(x)$ satisfies the differential equation
$$
\frac{1}{2}\phi_1(x)B'(x) + \frac{1}{8}\phi_2(x)B''(x) = 1
$$
with the initial condition $B(0)=0$, where
$$
\begin{aligned}
\phi_1(x) &= \left.\frac{\partial\Psi_1(x,y)}{\partial y}\right|_{y=0},\quad
\phi_2(x) = \left.\frac{\partial\sigma(x,y)}{\partial y}\right|_{y=0},\quad
\sigma(x,y) = \Psi_1^2 - 4\Psi_2.
\end{aligned}
$$
If $\G(R)$ is of the first type, that is, $\Psi_2(x,y)\equiv (x-y)^2 \mod \deg 3,$ then the series $B(x)$ defines a strong isomorphism of this two-valued formal group with the elementary two-valued formal group defined by the polynomial
$$
z^2-2(x+y)z+(x-y)^2.
$$
Moreover, for the logarithm of $\G(R)$ we have:
\begin{equation}\label{B_of_x_and_phi_2}
B(x) = \left(\int\limits_{0}^{\sqrt{x}} \frac{\mathrm{d} t}{\sqrt{\phi(t^2)}} \right)^2,
\quad
\phi_2(x) = 8\int\limits_{0}^{x}\phi_1(t)\,\mathrm{d} t,
\end{equation}
where $\phi(t)=\phi_2(t)/(16t)$ and $\phi(0)=1$.
\end{theorem}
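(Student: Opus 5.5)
I would follow the classical route through the differential identities that the logarithm inherits from the associativity of $\G(R)$, working throughout in $R[[x,y,z]]$ and using only formal manipulations.

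\textbf{Step 1: the two roots share a logarithm.} Let $z_{1,2}$ be the formal roots of $z^2-\Psi_1z+\Psi_2$, living in a quadratic extension of $R[[x,y]]$, or in $R[[u,v]]$ after the substitution $x=u^2$, $y=v^2$. The logarithm $B$ of $\G(R)$ is characterised, as in the modulus square case above, by
\begin{equation*}
B(z_{1,2})=\bigl(\sqrt{B(x)}\pm\sqrt{B(y)}\bigr)^2,
\end{equation*}
so that $\sqrt B$ linearises the law up to sign. For a general formal $\G(R)$ I would construct $B$ from the invariant-differential approach of \cite{Buchstaber75}. Differentiate the quadratic identity $z_j^2-\Psi_1 z_j+\Psi_2=0$ in $y$ at $y=0$, using $z_1=z_2=x$ at $y=0$. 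Since the roots coincide there, one must pass to the variable $v=\sqrt y$. One gets
\begin{equation*}
z_{1,2}=x\pm v\sqrt{\phi_2(x)/4}+\tfrac12\phi_1(x)v^2+O(v^3),
\end{equation*}
because $\sigma=\Psi_1^2-4\Psi_2=(z_1-z_2)^2$ gives $(z_1-z_2)^2=\phi_2(x)v^2+O(v^4)$, while $z_1+z_2=\Psi_1=2x+\phi_1(x)v^2+\dots$.

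\textbf{Step 2: the differential equation.} Apply $B$ to both roots, sum them, and expand to order $v^2$:
\begin{equation*}
B(z_1)+B(z_2)=2B(x)+v^2\bigl(\phi_1B'(x)+\tfrac14\phi_2B''(x)\bigr)+O(v^4).
\end{equation*}
The right side of the defining relation is $2B(x)+2B(y)=2B(x)+2v^2+O(v^4)$, using the normalisation $B(y)=y+\dots$. Comparing the $v^2$ coefficients gives exactly
\begin{equation*}
\tfrac12\phi_1B'+\tfrac18\phi_2B''=1.
\end{equation*}
Together with $B(0)=0$ and $B'(0)=1$, this linear first-order equation for $B'$ has a unique formal solution over a $\Q$-algebra. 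So one can instead \emph{define} $B$ as that solution and then prove that it linearises the law.

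\textbf{Step 3: linearisation (main obstacle).} The hard step is proving $B(z_{1,2})=(\sqrt{B(x)}\pm\sqrt{B(y)})^2$ for arbitrary $y$, not just to second order. I would show that both $w=\sqrt{B(z_j)}$ and $\sqrt{B(x)}\pm\sqrt{B(y)}$ satisfy the same differential equation in $y$. To obtain that equation, differentiate the associativity identity \eqref{assoc_condition}, specialised to the quadratic case, with respect to the third argument at $0$. This produces the invariant-operator identity
\begin{equation*}
L_y\bigl(h(z_j)\bigr)=(L h)(z_j),\qquad L=\tfrac12\phi_1\partial+\tfrac18\phi_2\partial^2,
\end{equation*}
where $L_y$ is $L$ acting in the $y$ variable; this is the generalized-shift-operator structure of \cite{Buchstaber75}. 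In the variable $s=\sqrt{B}$, the operator $L$ becomes $\tfrac18\,\mathrm{d}^2/\mathrm{d}s^2$, up to a first-order term that cancels by the equation of Step 2. The symmetric functions of the $B(z_j)$ then solve the wave equation $\partial_{s_x}^2=\partial_{s_y}^2$ with the initial data at $y=0$ fixed by Step 1. Uniqueness of formal solutions yields the linearisation.

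\textbf{Step 4: first type and the integral formulas.} For the first type, $\Psi_2\equiv(x-y)^2$ modulo degree $3$. The linearisation says precisely that $B$ conjugates $\G(R)$ to the law with roots $(\sqrt X\pm\sqrt Y)^2$, that is, to $z^2-2(x+y)z+(x-y)^2$. This conjugation is a strong isomorphism because $B(x)=x+O(x^2)$.

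For \eqref{B_of_x_and_phi_2}, differentiate $\sigma$ in $y$. The first-type condition lets one compute $\phi_2$ from $\Psi_1,\Psi_2$ as $\phi_2(x)=8\int_0^x\phi_1$; I would verify this using the relation between $\Psi_1$ and $\Psi_2$ forced by Step 1 at order $v^2$. With this relation, the equation of Step 2 reads $(\phi_2 B')'/8 \cdot\ldots=1$ in the form $\bigl(\sqrt{\phi_2}\,(\sqrt B)'\bigr)'$ type. Writing $x=t^2$, $s=\sqrt B$, it reduces to $\mathrm{d}s/\mathrm{d}t=1/\sqrt{\phi(t^2)}$ with $\phi=\phi_2/(16t)$ and $\phi(0)=1$. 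Integrating gives the stated formula for $B$.
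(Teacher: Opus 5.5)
The paper quotes this theorem from Buchstaber's 1975 paper without giving a proof, so your argument has to stand on its own.

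\textbf{Where the proposal breaks.} Steps 1--2 correctly derive the differential equation. One caveat: for a law that is not of the first type, the logarithm must be characterised by primitivity, $B(z_1)+B(z_2)=2B(x)+2B(y)$, rather than by the square-root formula. For example, for the doubled law $z_1=z_2=F(x,y)$ one has $\phi_2=0$ and $B$ is the ordinary logarithm of $F$. Step 2 only uses primitivity, so nothing is lost there.

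The genuine gap is in Step 3. In the coordinate $s=\sqrt{B(x)}$ write $L=c_2(s)\,\partial_s^2+c_1(s)\,\partial_s$, with $c_2=\frac18\phi_2\,(\mathrm{d}s/\mathrm{d}x)^2$.
\begin{itemize}
\item The equation of Step 2, $L(s^2)=1$, gives only the single relation $2c_2+2s\,c_1=1$.
\item Hence the first-order term vanishes if and only if $c_2\equiv\frac12$, i.e.\ iff $\phi_2B'^2=16B$. Given Step 2, this is equivalent to $\phi_2'=8\phi_1$.
\item That is exactly the relation you postpone to Step 4 and propose to read off from ``Step 1 at order $v^2$''. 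But that expansion holds for every pair $(\Psi_1,\Psi_2)$ and forces nothing. Deducing the relation from the linearisation instead would be circular.
\end{itemize}
Without this relation, the functions $\frac12\bigl(h(z_1)+h(z_2)\bigr)$ satisfy $(L_x-L_y)G=0$ for an operator that is not the wave operator in the $s$-coordinates. Uniqueness then gives no comparison with $\frac12\bigl(H(s_x+s_y)+H(s_x-s_y)\bigr)$. Note that Steps 1--2 go through verbatim for any non-associative pair with first-type leading terms, so this relation is precisely where associativity must enter. Also, the normalised constant is $\frac12$, not $\frac18$: for the elementary law, $L=\partial+2x\partial^2=\frac12\,\mathrm{d}^2/\mathrm{d}s^2$ with $x=s^2$.

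\textbf{How to close the gap.} Use your invariant-operator identity, but in the form that differentiating the multiset associativity actually yields. The individual roots are series in the square root of the third argument, so only symmetric expressions can be differentiated at $0$. Set $\mathcal{S}h:=\frac12\bigl(h(z_1)+h(z_2)\bigr)\in R[[x,y]]$; then one gets $L_y\,\mathcal{S}h=\mathcal{S}(Lh)$.
\begin{enumerate}
\item Expand $\mathcal{S}h=h+y\,Lh+y^2Mh+O(y^3)$. The power sums of $z_j-x$ show that the $\partial^3$- and $\partial^4$-coefficients of $M$ are $\frac1{16}\phi_1\phi_2$ and $\frac1{384}\phi_2^2$.
\item For the first type ($\phi_1(0)=2$, $\phi_2=16x+O(x^2)$), the order-$y$ part of the identity reads $6M+\frac12\phi_1'(0)L=L^2$.
\item Comparing $\partial^3$-coefficients gives $\frac38\phi_1\phi_2=\frac18\phi_1\phi_2+\frac1{32}\phi_2\phi_2'$, i.e.\ $\phi_2(\phi_2'-8\phi_1)=0$.
\item Since $\phi_2=16x(1+O(x))$ is a non-zero-divisor in $R[[x]]$, this gives $\phi_2'=8\phi_1$.
\item Multiplying the Step 2 equation by $16B'$ yields $(\phi_2B'^2)'=16B'$, hence $\phi_2B'^2=16B$ and $L=\frac12\,\mathrm{d}^2/\mathrm{d}s^2$.
\end{enumerate}
Now your wave-equation uniqueness works: solve in even powers of $s_y$, dividing by $2n(2n-1)$. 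Applied to $h(z)=z$ and $h(z)=z^2$, it shows that $\Psi_1,\Psi_2$ coincide with those of the pulled-back elementary law, which gives the strong isomorphism. Finally, $\phi_2B'^2=16B$ is equivalent to $(\sqrt{B})'=2/\sqrt{\phi_2}$. With $x=t^2$ this becomes $\mathrm{d}\sqrt{B}/\mathrm{d}t=1/\sqrt{\phi(t^2)}$, which yields both integral formulas directly.
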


As a corollary, we get the following

\begin{theorem}\label{Buchstaber_genus_theorem}
\leavevmode
\begin{enumerate}[{\bf (i)}]

\item Let
\begin{equation}\label{Integral_I_formula}
\begin{aligned}
u := I(x)=\int\limits_{0}^{\sqrt{x}}\frac{\dd t}{\sqrt{1+a_1t^2+a_2t^4+a_3t^6}},\\
g_2 = 4\left (\frac{a_1^2}{3}-a_2 \right ), \qquad
g_3=4\left(\frac{a_1a_2}{3}-\frac{2a_1^3}{27}-a_3 \right).
\end{aligned}
\end{equation}
Then
$$
x(u) = \frac{1}{\wp(u;g_2,g_3)-a_1/3}.
$$
$($If $g_2^3-27g_3^2 = 0$, then the function $
\wp(u; g_2, g_3) = -\frac{\dd^2}{\dd u^2}\log\sigma(u;g_2,g_3)$
corresponds to a degeneration of the Weierstrass $\sigma$-function$)$.

\item The logarithm and exponential of the formal two-valued group $\G_{\bm{a}}$ are given by the following formal series over the algebra $\Q\left[a_1, a_2,a_3\right]$:
$$
B(x)= I^2(x) =
\left(\int\limits_{0}^{\sqrt{x}}\frac{\dd t}{\sqrt{1+a_1t^2+a_2t^4+a_3t^6}}\right)^2, \qquad
B^{-1}(x)=\frac{1}{\wp(\sqrt{x};g_2,g_3)-a_1/3}.
$$

\item Let $Q(t) := 1 - a_1 t^2 + a_2 t^4 - a_3 t^6$. Then the law $F_{\Bc}(u, v)$, its exponential $f_{\Bc}(u)$, and its logarithm $g_{\Bc}(u)$ have the form:
$$
F_{\Bc}(u,v)= \left[\left(\frac{u\sqrt{Q(v)}-v\sqrt{Q(u)}}{u^2-v^2}\right)^2+a_3u^2v^2\right]^{-1/2},
$$
\[
f_{\Bc}(u)=\frac{1}{\sqrt{\wp(u;g_2,-g_3)+\frac{a_1}{3}}},\qquad
g_{\Bc}(u) = \int\limits_{0}^{u}\frac{\dd t}{\sqrt{1-a_1t^2+a_2t^4-a_3t^6}}.
\]

\item The formal group $F_{\Bc}(u, v)$ is universal in the class of all single-valued formal groups
with the condition $\overline{u} = -u$, for which the modulus square construction yields the two-valued formal group $\G_{\bm{a}}$ with the two-valued law \eqref{B}.

\item  The intersection of the classes $\Ff(u, v)$ and $F_{\Bc}(u,v)$ coincides with the Ochanine genus. This is characterized by the conditions
$\lambda_1(u)\equiv 1$ for $\Ff(u,v)$ and $a_3=0$ for $F_{\Bc}(u,v)$.

\end{enumerate}
\end{theorem}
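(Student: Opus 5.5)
We derive all five items of Theorem \ref{Buchstaber_genus_theorem} from Theorem \ref{two_valued_group_differential_equation}, applied to the law $B_{\bm{a}}$ written as $z^2-\Psi_1z+\Psi_2=0$.

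\textbf{Items (i) and (ii).} First put $B_{\bm{a}}$ in the normalized form. The coefficient of $z^2$ is
$$F_0(x,y)=(1-a_2xy)^2-4a_3xy(x+y+a_1xy)$$
(up to the $e_3$ terms, which vanish at $y=0$), so $\Psi_1$ and $\Psi_2$ are obtained by dividing by $F_0$.

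Next compute $\phi_1(x)=\partial_y\Psi_1|_{y=0}$ and $\phi_2(x)=\partial_y(\Psi_1^2-4\Psi_2)|_{y=0}$. For this it suffices to expand to first order in $y$. At $y=0$ we have $F_0=1$ and $\Psi_1=x$. Only the linear terms in $y$ of $F_0$, $F_1$ and $F_2$ contribute, and they are read off directly from \eqref{B_a_e}.

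We expect $\phi_2(x)=16x(1+a_1x+a_2x^2+a_3x^3)$, that is, $\phi(t)=1+a_1t+a_2t^2+a_3t^3$. This can be cross-checked against the relation $\phi_2=8\int\phi_1$. Formula \eqref{B_of_x_and_phi_2} then gives $B(x)=I(x)^2$, which is (ii) for $B$.

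For $B^{-1}$, and hence (i), substitute $t=1/\sqrt{s}$ in $I(x)$:
$$I(x)=\frac12\int_{1/x}^{\infty}\frac{\dd s}{\sqrt{s^3+a_1s^2+a_2s+a_3}}.$$
Shift $s=w-a_1/3$ to reach the Weierstrass form $4(w^3-\tfrac{g_2}{4}w-\tfrac{g_3}{4})$. Here $g_2$ and $g_3$ are those of \eqref{Integral_I_formula}, matching the relations in \eqref{E_g_2_and_g_3} with $\alpha=a_1/3$. The integral becomes the inverse of $\wp$, giving $1/x-a_1/3+\ldots$; the sign of the shift is to be checked carefully.

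The degenerate case $g_2^3=27g_3^2$ follows by continuity in the parameters, since all identities are identities of formal series polynomial in $a_i$.

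\textbf{Item (iii).} Take $\overline{u}=-u$, so that $x=-u^2$ and $B(x)=-g(u)^2$. Setting $x=-u^2$ in $I$ gives
$$g(u)=\int_0^u\frac{\dd t}{\sqrt{Q(t)}},$$
which is $g_{\Bc}$. The exponential is $f_{\Bc}(u)^2=-B^{-1}(-u^2)$, and the $\wp$ expression follows from homogeneity of $\wp$ under $u\mapsto iu$, which flips the sign of $g_3$.

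The closed formula for $F_{\Bc}$ is the main computational obstacle. We would verify it by the addition theorem. Write $F=f(g(u)+g(v))$ with $f=(\wp+c)^{-1/2}$. Then use the addition formula
$$\wp(U+V)=\frac14\Bigl(\frac{\wp'(U)-\wp'(V)}{\wp(U)-\wp(V)}\Bigr)^2-\wp(U)-\wp(V),$$
and substitute $\wp(U)=u^{-2}-c$ and $\wp'(U)=\pm 2u^{-3}\sqrt{Q(u)}$, where the latter follows from the differential equation for $f$. Simplifying yields the bracketed expression, including the $a_3u^2v^2$ term.

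\textbf{Items (iv) and (v).} For (iv), universality follows because the modulus square of any $F$ with $\overline{u}=-u$ has logarithm $-g(u)^2$. Hence $g$ is determined by $B$, which is determined by $\G_{\bm{a}}$, and so $g=g_{\Bc}$ over the universal ring.

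For (v), compare logarithms. For $\Ff$ with $\lambda_1\equiv1$ the logarithm is known to be elliptic of Ochanine type, with $g'(u)^{-2}$ a quartic in $u^2$. Matching this with $Q(t)$ forces $a_3=0$, and conversely.
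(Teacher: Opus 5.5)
Your treatment of (i)--(iv) is sound. It is the natural unpacking of what the paper does: the paper states the theorem as a corollary of Theorem~\ref{two_valued_group_differential_equation} and gives no details. A few slips need fixing.

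At $y=0$ both roots equal $x$, so $\Psi_1(x,0)=2x$, not $x$. With this value, $\phi_1(x)=2+4a_1x+6a_2x^2+8a_3x^3$ and $\phi_2(x)=16x(1+a_1x+a_2x^2+a_3x^3)$, as you expected.

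In (i), the substitutions $s=w-a_1/3$ and $s=1/x$ give $\wp(u)=1/x+a_1/3$, which is the stated formula.

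Your addition-theorem check in (iii) does close. With $\wp(U)=u^{-2}-a_1/3$ and $\wp'(U)=-2u^{-3}\sqrt{Q(u)}$, one has
\[
\frac{(v^3\sqrt{Q(u)}-u^3\sqrt{Q(v)})^2}{u^2v^2(u^2-v^2)^2}-\Bigl(\frac{u\sqrt{Q(v)}-v\sqrt{Q(u)}}{u^2-v^2}\Bigr)^2=\frac{1}{u^2}+\frac{1}{v^2}-a_1+a_3u^2v^2 .
\]
This cancels against $-\wp(U)-\wp(V)+a_1/3=-u^{-2}-v^{-2}+a_1$ and leaves exactly the $a_3u^2v^2$ term.

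In (iv) you argue only uniqueness. Existence, i.e.\ that $F_{\Bc}$ really yields $\G_{\bm{a}}$, holds because both two-valued laws have the same logarithm $I^2$. Hence they have the same roots $B^{-1}\bigl((\sqrt{B(x)}\pm\sqrt{B(y)})^2\bigr)$.

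The genuine gap is in (v). You only intersect $F_{\Bc}$ with the subclass $\lambda_1\equiv1$ of $\Ff$, but the statement concerns the whole class. The real content is that any $\Ff$ coinciding with some $F_{\Bc}$ must have $\lambda_1\equiv1$, modulo the trivial ambiguity $\lambda_1\mapsto\lambda_1+cu^2$.

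Comparing logarithms cannot give this. Since $\partial_v\Ff(u,0)=\lambda_2(u)+(\lambda_1'(0)-\lambda_2'(0))u$, the logarithm sees only $\lambda_2$ and is blind to the higher coefficients of $\lambda_1$. (Incidentally, the Ochanine $g'(u)^{-2}$ is quadratic, not quartic, in $u^2$.)

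One way to close the gap is to compare the full laws:
\begin{enumerate}
\item $F_{\Bc}(u,-u)=0$ forces $\lambda_1$ to be even.
\item Matching $g'$ then gives $u\lambda_2(v)-v\lambda_2(u)=u\sqrt{Q(v)}-v\sqrt{Q(u)}=(u-v)r(u,v)$ with $r=1+\dots$.
\item Your closed formula from (iii) then turns $\Ff=F_{\Bc}$ into
\[
\frac{u^2\lambda_1(v)-v^2\lambda_1(u)}{u^2-v^2}=\Bigl(1+\frac{a_3u^2v^2(u+v)^2}{r(u,v)^2}\Bigr)^{-1/2}.
\]
\item Write $\lambda_1=1+\sum_{j\ge2}m_{2j}u^{2j}$. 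The left side is $1-\sum_{j\ge2}m_{2j}u^2v^2\,(u^{2j-2}-v^{2j-2})/(u^2-v^2)$, a series in $u^2,v^2$ alone.
\item The degree-$6$ part of the right side is $-\frac{a_3}{2}u^2v^2(u+v)^2$, whose $u^3v^3$ coefficient is $-a_3$. Hence $a_3=0$.
\item Then the right side is identically $1$, so every $m_{2j}=0$, i.e.\ $\lambda_1\equiv1$.
\end{enumerate}
Only with this step does ``the intersection is the Ochanine genus'' follow. The converse direction you already have: for $a_3=0$ the closed formula is visibly of the form $\Ff$ with $\lambda_1\equiv1$ and $\lambda_2=\sqrt{Q}$.
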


The corresponding Hirzebruch genus is known as the Buchstaber genus. The corresponding class of formal group laws $F_{\Bc}(u, v)$ is related to, but does not coincide with, the family of formal group laws associated with the Krichever genus. This genus gives examples not arising from Hirzebruch’s elliptic genera of level $n$. Its values $\Bc(\Theta^n)$, $\Bc(\CP^n)$ on smooth theta divisors and complex projective spaces can be found in \cite{Kornev26}.

\section{Curves in the Moduli Space of 2-Valued Groups}\label{Curves_in_moduli_space}

Consider the dynamical system in the phase space with coordinates $k_2,k_4,$ and $k_6$:
\begin{equation}\label{dyn_sys} k_2' = \frac{\lambda}{2}k_4,\quad k_4' = \frac{3\lambda}{4}k_6,\quad k_6' = \frac{\lambda}{4}(k_4^2-k_2k_6)=\lambda k_8, \end{equation}
where $\lambda$ is a nonzero constant, $k_j=k_j(\tau)$, and $(-)' = 1/(2\pi i)\dd/\dd\tau$.

In this section, we show that the trajectories of the dynamical system \eqref{dyn_sys} satisfy the following properties.

\subsection{The Associativity Condition and the Chazy Equation}

The main result of this section is the following.

\begin{theorem}\label{phase_flow}
Let
\[
B_{\tau}(z; x, y) =  e_1^2 - 4 e_2 + k_2(\tau) e_3 + \frac{k_4^2(\tau) - k_6(\tau)k_2(\tau)}{4} e_3^2 + k_4(\tau) e_1 e_3 + k_6(\tau) e_2 e_3,
\]
be a one-parameter family of Buchstaber polynomials with variables $k_2,k_4,k_6$ satisfying the dynamical system \eqref{dyn_sys}. Then the following statements hold:
\begin{enumerate}[\bf (i)]
\item There exists a one-parameter family $\G_{\Cc}(B_{\tau})$ of symmetric $2$-algebraic two-valued groups containing representatives of all isomorphism classes of symmetric $2$-algebraic two-valued groups corresponding to coset constructions of elliptic curves
\begin{equation}\label{cubic}y^2 = x^3-k_2/4\,x^2-k_4/2\,x-k_6/4\end{equation}
(the discriminant $\delta_{\bm{k}}$ of the polynomial on the right-hand side is nonzero).

\item Moreover, within the family $\G_{\Cc}(B_{\tau})$, the associativity condition
$4k_8 = k_4^2 - k_6k_2$
for the universal symmetric $2$-algebraic two-valued group (see \eqref{P_2_universal}) is equivalent to the Chazy equation
\begin{equation}\label{Chazy_equation}y''' = yy'' - \frac{3}{2}y'^2,\end{equation}
where $y =y(\tau) = -\lambda k_2/4$ and $(-)' = 1/(2\pi i)\dd/\dd\tau$.

\item The case of the nodal cubic corresponds to the constant family $k_2\equiv1$, $k_4\equiv k_6\equiv0$, while the case of the cuspidal cubic corresponds to the family $k_2=k_4=k_6=0$.
\end{enumerate}
\end{theorem}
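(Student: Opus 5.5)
The proof reduces the statement to the Ramanujan identities \eqref{Ramanujan}, via the dictionary \eqref{k_and_a} and Proposition \ref{a_and_E}. From \eqref{dyn_sys} and $y=-\lambda k_2/4$ we can express $k_4$ and $k_6$ through derivatives of $y$:
\[
k_4=\frac{2}{\lambda}k_2'=-\frac{8}{\lambda^2}y',\qquad k_6=\frac{4}{3\lambda}k_4'=-\frac{32}{3\lambda^3}y''.
\]
So along a trajectory the whole triple $(k_2,k_4,k_6)$ is determined by $y,y',y''$. The third equation $k_6'=\lambda k_8$ then says that $\lambda k_8$ equals $-\frac{32}{3\lambda^3}y'''$.

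\textbf{Part (ii).} First compute
\[
\frac{\lambda}{4}(k_4^2-k_2k_6)=\frac{\lambda}{4}\Bigl(\frac{64}{\lambda^4}y'^2-\frac{4y}{\lambda}\cdot\frac{32}{3\lambda^3}y''\Bigr)=\frac{16}{\lambda^3}y'^2-\frac{32}{3\lambda^3}yy''.
\]
Here $k_8$ is regarded as a free coefficient of \eqref{P_2_universal}, defined along the family by $\lambda k_8:=k_6'$. The associativity relation \eqref{assoc}, $4k_8=k_4^2-k_2k_6$, is then the identity
\[
-\frac{32}{3\lambda^3}y'''=\frac{16}{\lambda^3}y'^2-\frac{32}{3\lambda^3}yy''.
\]
Multiplying by $-3\lambda^3/32$ gives exactly $y'''=yy''-\frac32y'^2$, which is \eqref{Chazy_equation}. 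Conversely, if $y$ solves Chazy, defining $k_2,k_4,k_6$ by the formulas above yields a trajectory of \eqref{dyn_sys} satisfying \eqref{assoc}. The point to write carefully is this bookkeeping: the first two equations of \eqref{dyn_sys} are the definitions of $k_4,k_6$, while the last equation, read as $k_6'=\lambda k_8$ with $k_8$ free, is exactly where associativity enters.

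\textbf{Part (i).} By \eqref{k_and_a} the curve \eqref{cubic} is the curve \eqref{E_a} with $a_1=-k_2/4$, $a_2=-k_4/2$, $a_3=-k_6/4$. I would choose $\lambda=-1/\pi^2$ and the Chazy solution $y=E_2$ (Chazy holds for $E_2$ by differentiating the Ramanujan identities \eqref{Ramanujan}). Then
\[
a_1=-\pi^2E_2,\qquad a_2=\frac{4}{\lambda^2}y'=4\pi^4E_2',\qquad a_3=\frac{8}{3\lambda^3}y''=-\frac83\pi^6E_2'',
\]
which is precisely \eqref{a_and_E_2}. Proposition \ref{a_and_E} then shows that the curves realize every point of the moduli space of elliptic curves. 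By Proposition \ref{coset_construction_on_elliptic_curves}, the coset construction on these curves gives the groups $\G_{\Cc}(B_\tau)$, and part (ii) guarantees they are associative. For a general nonzero $\lambda$ one can rescale $\tau$ instead. The only remaining check is that isomorphism classes of groups are governed by isomorphism classes of curves. The isomorphisms intended are the rescalings $x\mapsto cx$, which act on the $k_{2j}$ with weights; as in the proof of Proposition \ref{a_and_E}, the $j$-invariant suffices.

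\textbf{Part (iii)} is a direct check. The constant triple $k_2\equiv1$, $k_4\equiv k_6\equiv0$ satisfies \eqref{dyn_sys}, since $k_4^2-k_2k_6=0$. It gives the cubic $x^3-x^2/4=x^2(x-\frac14)$, which has a double root, so the curve is nodal. The triple $k_2=k_4=k_6=0$ gives $y^2=x^3$, the cuspidal cubic. These correspond to the constant Chazy solutions $y=-\lambda/4$ (which rescales to the solution $1$) and $y=0$.

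The main obstacle I expect is part (i): making precise the phrase ``representatives of all isomorphism classes'' of groups, beyond the curve-level statement that Proposition \ref{a_and_E} provides directly.
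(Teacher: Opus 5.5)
Your proof follows the paper's. For (ii) it is the same computation: you express $k_8$ once through $k_6'=\lambda k_8$ and once through $\tfrac14(k_4^2-k_2k_6)$, as in \eqref{k_8_one_side} and \eqref{k_8_the_other_side}, and equate. For (iii) it is the same direct substitution. For (i), the paper only says ``analogously to Proposition \ref{a_and_E}'', and your explicit check with $\lambda=-1/\pi^2$, $y=E_2$ reproducing \eqref{a_and_E_2} fills this in correctly. Your reading of $k_8$ as a free coefficient, with $\lambda k_8:=k_6'$, is a useful clarification that the paper leaves implicit. As \eqref{dyn_sys} is written, associativity is built into its last equation, which would make (ii) vacuous.

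One sentence in your part (i) is wrong: ``the isomorphisms intended are the rescalings $x\mapsto cx$ \dots\ the $j$-invariant suffices.'' Rescalings act by $k_{2j}\mapsto c^jk_{2j}$, so the space of $(k_2,k_4,k_6)$ modulo rescaling is two-dimensional. The one-parameter family $\tau\mapsto(k_2,k_4,k_6)(\tau)$ therefore cannot meet every rescaling class. Nor does $j$ separate these classes: $(0,-g_2/4,-g_3/4)$ and its translate under $\zeta\mapsto\zeta+r$, $r\neq 0$, have the same $j$ but are not rescalings of each other.

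What makes $j$ sufficient is the translations $\zeta\mapsto\zeta+r$, which are the shift $\alpha$ in the proof of Proposition \ref{a_and_E}. They commute with $\sigma$, so they induce isomorphisms of the coset groups. In the coordinate $x=-1/\zeta$ they become the M\"obius maps $x\mapsto x/(1-rx)$ fixing $0$. So the isomorphisms must be taken on $\CP^1$, not as linear maps of $\Cc$. With that notion, isomorphism classes of groups coincide with isomorphism classes of curves, and your argument for (i) goes through.
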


\begin{proof}
Part {\bf (i)} is obtained analogously to Proposition \ref{a_and_E}.

Let us prove part {\bf (ii)}. From \eqref{dyn_sys} we know that, on the one hand,
\begin{equation}\label{k_8_one_side}k_8 = \frac{1}{\lambda}k_6' = \frac{4}{3\lambda^2}k_4'' = \frac{8}{3\lambda^3}k_2''' = -\frac{32}{3\lambda^4}y'''.\end{equation}
On the other hand,
\begin{equation}\label{k_8_the_other_side}k_8 = \frac{1}{4}(k_4^2-k_2k_6) = \frac{1}{\lambda^2}(k_2')^2- \frac{2}{3\lambda^2}k_2k_2'' = \frac{16}{\lambda^4}\left ( (y')^2 -\frac{2}{3}yy''\right ).\end{equation}
Equating \eqref{k_8_one_side} and \eqref{k_8_the_other_side}, we obtain \eqref{Chazy_equation}.

Part {\bf (iii)} follows by direct substitution of the parameters into \eqref{dyn_sys} and \eqref{cubic}.
\end{proof}

As is well known (see, for example, \cite{Zagier123}), the quasimodular form $E_2$ satisfies the Chazy equation \eqref{Chazy_equation}. Thus, Proposition \ref{a_and_E} is a consequence of Theorem \ref{phase_flow} for $\lambda = -1/\pi^2$.

\subsection{M\"obius Orbits of Solutions to the Chazy Equation}

The solution space $\Sol$ of the Chazy equation \eqref{Chazy_equation} carries a natural action of the group $\SL_2(\Cc)$. Under this action, each solution $y\in\Sol$ is sent to the solution
\begin{equation}\label{action}(y\big\|_{2} g)(\tau) := \frac{1}{(c\tau+d)^2}y\left(g\tau\right) - \frac{6c}{c\tau + d},\end{equation}
where, as usual, $g\tau$ denotes the fraction $(a\tau + b)/(c\tau + d)$.

Although the following result was already known to Chazy \cite{Chazy}, we were unable to find an explicit self-contained proof in the literature. For this reason, we provide it.

\begin{theorem}
The space $\Sol$ of solutions to the Chazy equation \eqref{Chazy_equation} decomposes into three orbits
$$\Sol = G(E_2) \sqcup G(0)\sqcup G(1)$$
under the action of the group $G = \SL_2(\Cc)$, namely the orbit of the Eisenstein series $y_0 = E_2$, the zero solution $y_1 = 0$, and the constant solution $y_2=1$.
\end{theorem}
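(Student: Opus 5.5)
The plan is to combine the $\SL_2(\Cc)$-equivariance of the Chazy equation \eqref{Chazy_equation} with a dimension count, and to handle the remaining solutions through a reduction to the Halphen system.

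\textbf{Invariance and orbit dimensions.} First I will check directly that \eqref{action} sends solutions to solutions and defines a right action. This is a routine computation with $D=\frac{1}{2\pi i}\frac{\dd}{\dd\tau}$: the inhomogeneous term $-6c/(c\tau+d)$, with the constant adjusted to the normalization of $D$, is exactly what cancels the terms created by differentiating $(c\tau+d)^{-2}$ three times.

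Since \eqref{Chazy_equation} is a third-order ODE, its local solution space is parametrized by the initial data $(y,y',y'')$ at a point, so it has three dimensions. I will then compute the stabilizers of the three representatives.
\begin{itemize}
\item For $y_0=E_2$, the quasimodular transformation law of $E_2$ shows that the stabilizer is $\SL_2(\Z)$, which is discrete. Hence $G(E_2)$ is an open three-dimensional orbit.
\item For $y_1=0$, the orbit consists of the functions $-6c/(c\tau+d)$, that is, $0$ and $-6/(\tau-\tau_0)$ up to the normalizing constant. The stabilizer is the Borel subgroup $\{c=0\}$.
\item For $y_2=1$, the orbit consists of the functions $(c\tau+d)^{-2}-6c/(c\tau+d)$.
\end{itemize}
Comparing these explicit formulas shows that the three orbits are pairwise disjoint, for example by looking at the pole and constant behaviour.

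\textbf{Exhaustion via the Halphen system.} To show that every solution lies in one of the three orbits, I will write $y=2(\omega_1+\omega_2+\omega_3)$, suitably normalized. Here $(\omega_1,\omega_2,\omega_3)$ solves the Halphen system
$$\omega_1'=\omega_2\omega_3-\omega_1(\omega_2+\omega_3)$$
together with its cyclic permutations, up to the normalizing constants of $D$. Generic initial data $(y,y',y'')$ lift to Halphen data with distinct $\omega_i$. The classical integration then expresses the $\omega_i$ through the Schwarz triangle function for angles $(0,0,0)$: $\tau$ is a ratio of two independent solutions of a hypergeometric equation, with $\omega_i$ written via logarithmic derivatives of theta-constants. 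Changing the basis of the hypergeometric solutions replaces $\tau$ by $g\tau$ with $g\in\SL_2(\Cc)$, and the induced change of $y$ is exactly \eqref{action}. The specific basis that produces the Jacobi theta-constants gives $y=E_2$. So every generic solution lies in $G(E_2)$.

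\textbf{Degenerate cases.} When the $\omega_i$ are not pairwise distinct, the Halphen system collapses. If exactly two coincide, it reduces to a Riccati-type equation whose solutions are $(c\tau+d)^{-2}-6c/(c\tau+d)$ up to scaling. This is precisely $G(1)$: rescaling the constant solution by $\tau\mapsto\mu\tau$ lies in $\SL_2(\Cc)$ via $\mathrm{diag}(\mu^{1/2},\mu^{-1/2})$, so nonzero constants are identified with $1$. If all three coincide, one gets $y'=-y^2/6$ up to normalization, whose solutions form $G(0)$. An alternative route for these cases is to show that a solution outside $G(E_2)$ must make the discriminant-type expression $E_4^3-E_6^2$, built from $y$ via \eqref{E_4_E_6}, vanish identically, and then to solve the resulting lower-order equation explicitly.

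\textbf{Main obstacle.} I expect the hardest step to be making the lift from Chazy to Halphen data, and the genericity statement, rigorous. Concretely, one must show that the non-generic locus is exactly the union of the two lower-dimensional orbits, and must handle solutions defined only on domains other than $\Hh$, with their natural boundaries, via analytic continuation. I would control this by working with germs of solutions at a point and matching initial data, rather than with global functions.
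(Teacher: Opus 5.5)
Your proposal is correct, but it takes a different route from the paper.

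\textbf{The paper's route.} The paper stays at level one. From $y$ it builds $Q=y^2-6y'$, $R=y^3-9yy'+9y''$ and $\Delta=Q^3-R^2$, the Chazy analogues of $E_4$, $E_6$ and the discriminant. It then inverts the $j$-type function $t=-R^2/\Delta$ through the Schwarz triangle $(\tfrac12,0,\tfrac13)$, with hypergeometric parameters $(\tfrac1{12},\tfrac1{12},\tfrac12)$. From $\Delta\propto\psi^{12}$ it reads off $y=6\frac{\dd}{\dd\tau}\ln\psi$. When $\Delta\equiv0$ it writes $Q=u^2$, $R=u^3$ and reduces to $w'=w^2/6$. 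It then separates $G(0)$ from $G(1)$ according to whether the constant $A$ in $A/(c\tau+d)^2-6c/(c\tau+d)$ vanishes.

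\textbf{Your route.} You pass to level two through the Halphen system and the $(0,0,0)$ triangle. The lift you flag as the main obstacle is in fact immediate. Normalize by $y=-2(\omega_1+\omega_2+\omega_3)$, $'=\dd/\dd\tau$ and $y'''=2yy''-3y'^2$. Then the Halphen system forces $\sigma_2=-\sigma_1'$ and $\sigma_3=\sigma_1''/6$, so the $\omega_i$ must be the roots of $w^3+\frac{y}{2}w^2+\frac{y'}{2}w+\frac{y''}{12}$. Take these roots at one point as Halphen initial data. Uniqueness for the Chazy initial value problem then lifts every germ, generic or not.

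The discriminant of this cubic equals $(Q^3-R^2)/432$. So your generic case is exactly the paper's Case 1. Moreover, $(\omega_i-\omega_j)'=-2\omega_k(\omega_i-\omega_j)$ shows that the coincidence pattern of the roots is constant on a connected domain.

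\textbf{Comparison.} Your route has two advantages.
\begin{itemize}
\item The cross-ratio $s=(\omega_1-\omega_3)/(\omega_2-\omega_3)$ satisfies $s'=2s(\omega_1-\omega_2)\neq 0$, so inverting $\tau(s)$ involves no branch points. By contrast, the paper's $t'=2RQ^2/\Delta$ vanishes at the zeros of $Q$ and $R$.
\item The split between $G(1)$ and $G(0)$ becomes simply double versus triple root. Each case can be integrated by hand; in the double-root case the system is $u'=-u^2$, $v'=u^2-2uv$.
\end{itemize}
The cost is that you need $\theta_2\theta_3\theta_4=2\eta^3$ to identify $\sum\omega_i$ with a multiple of $E_2$. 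The paper's level-one construction produces $E_2$ directly.

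Your stabilizer and dimension count is only a consistency check and is not needed for the decomposition. Note also that the quasimodular law gives only $\SL_2(\Z)\subseteq\mathrm{Stab}(E_2)$; equality needs an extra argument.
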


\begin{proof}
Under the substitution $y\mapsto y/2$, equation \eqref{Chazy_equation} takes the form
\begin{equation}\label{original_Chazy_equation} y''' = 2yy'' - 3(y')^2. \end{equation}
We will assume that $(-)'$ denotes the derivative $\dd/\dd\tau$ (without the factor $1/(2\pi i)$). The solutions of the Chazy equations with respect to the derivatives $\dd/\dd\tau$ and $1/(2\pi i)\dd/\dd\tau$ differ only by the factor $2\pi i$.

Let $y = y(\tau)$ be a solution of the Chazy equation \eqref{original_Chazy_equation}. Introduce the auxiliary functions
$$Q(\tau):=y^2-6y',\qquad
R(\tau):=y^3-9yy'+9y'', \qquad \Delta(\tau):=Q^{3}-R^{2}.$$

Then
\begin{equation}\label{helpful_derivatioves}Q'=\frac23(yQ-R),\qquad R'=yR-Q^2, \qquad \Delta' = 2y\Delta.\end{equation}

\textbf{\textit{Case 1.}} $\Delta\not\equiv 0$. Then one can define the function
\begin{equation}\label{t(x)}t(\tau):=-\frac{R^2}{\Delta}.\end{equation}
A direct computation using formulas \eqref{helpful_derivatioves} shows that the function $t = t(\tau)$ satisfies the following Schwarz equation
\begin{equation}\label{Schwarzian_equation}
\{t;\tau\}=V\left(\frac{1}{2},0, \frac{1}{3}; t, \tau\right)(t')^2\;
\end{equation}
with potential
$$V(\alpha,\beta,\gamma; t, \tau) = \frac{1-\alpha^2}{t^2}+\frac{1-\beta^2}{(t-1)^2}+\frac{\alpha^2+\beta^2-\gamma^2-1}{t(t-1)},$$
where
$$\{t; \tau\} = \frac{t'''}{t'}-\frac{3}{2}\left(\frac{t''}{t'}\right)^2$$
is the Schwarzian derivative.

It is known (see \cite[Section 2.2]{Chakravarty} and the references therein) that the general solution of the Schwarz equation
$$\{t,\tau\}=V(\alpha,\beta,\gamma; t, \tau)(t')^2$$
for $\alpha+\beta+\gamma<1$ is a function $t(\tau)$ whose inverse is the Schwarz function $S(\alpha, \beta, \gamma; t)$. The function $S(\alpha, \beta, \gamma; t)$ maps the upper half-plane $\Hh$ to a triangle with angles $\pi\alpha$, $\pi\beta$, and $\pi\gamma$, and $S(\alpha, \beta, \gamma; t)$ is represented as a quotient $\phi(t)/\psi(t)$, where $\phi$ and $\psi$ are some linearly independent solutions of the Gauss hypergeometric equation
\begin{equation}\label{Gauss_hypergeometric_equation}
t(1-t)\,z''(t)+\bigl(c-(a+b+1)t\bigr)\,z'(t) - ab\,z(t) = 0
\end{equation}
with parameters $(a,b,c)$ related to $(\alpha,\beta,\gamma)$ by the equalities
$$(1-c,\ c-a-b,\ a-b)=(\alpha,\beta,\gamma).$$

Thus, in the case when $\Delta = Q^3-R^2$ is not identically zero, the function $\tau(t)$ is represented as the quotient $\phi(t)/\psi(t)$, where $\phi$ and $\psi$ are linearly independent solutions of equation \eqref{Gauss_hypergeometric_equation} for $(a,b,c) = (1/12, 1/12, 1/2)$.

A direct computation shows that
\begin{equation}\label{Delta_of_tau} \Delta = -\frac{(t')^6}{64t^3(1-t)^4}. \end{equation}
Moreover,
\begin{equation}\label{t_prime_of_tau}t'(\tau) =\frac{\dd t}{\dd\tau} = 1/\left(\frac{\phi(t)}{\psi(t)}\right)'= \frac{\psi^2(t)}{W(t)},\end{equation}
where $W(t) = \phi\psi'-\phi'\psi$ is the Wronskian of the solutions $\phi = \phi(t)$ and $\psi = \psi(t)$ of the Gauss hypergeometric equation. As is well known, the Wronskian $W$ of any second-order linear differential equation $az''+bz'+cz=0$ satisfies the differential equation
$$W'=-\frac{b}{a}W.$$
In our case,
\begin{equation}\label{W(t)}W(t) = Ct^{-1/2}(1-t)^{-2/3}\end{equation}
for some constant $C$. Substituting \eqref{W(t)} and \eqref{t_prime_of_tau} into \eqref{Delta_of_tau}, we obtain
$$\Delta = -\frac{1}{64C^6}\psi^{12}.$$
From \eqref{helpful_derivatioves} we get
$$y=\frac{1}{2}\frac{\Delta'}{\Delta}= 6\,\frac{1}{\psi}\frac{\dd\psi}{\dd\tau}=6\frac{\dd}{\dd\tau}\ln\psi.$$

Thus, every solution of the Chazy equation for which $\Delta\not\equiv 0$ is parametrized as follows:
$$\tau(t) = \cfrac{\phi(t)}{\psi(t)},\qquad y(\tau) = 6\frac{\dd}{\dd\tau}\ln \psi(t(\tau)).$$

Suppose we have another solution $\widetilde{\tau}(t) = \widetilde{\phi}(t)/\widetilde{\psi}(t)$, where
$\widetilde{\phi} = a\phi + b\psi$ and $\widetilde{\psi} = c\phi + d\psi$.
Without loss of generality, we may assume that
$\left(\begin{matrix} a & b\\ c & d \end{matrix}\right )\in\SL_2(\Cc)$.
Then, using the facts that $\phi = \tau\psi$ and
$\widetilde{\tau} = \frac{a\tau+b}{c\tau + d}$, we obtain
$$\widetilde{y}(\widetilde{\tau}) = 6\frac{\dd}{\dd\widetilde{\tau}}\ln\widetilde{\psi} = 6\frac{\dd\tau}{\dd\widetilde{\tau}}\frac{\dd}{\dd\tau}\ln\widetilde{\psi}= (c\tau+d)^2y + 6c(c\tau +d).$$
Thus,
$$\widetilde{y}(\tau) = \frac{1}{(c\tau - a)^2}y\left ( \frac{\tau d-b}{-c\tau+a}\right ) + \frac{6c}{-c\tau+a}.$$
Therefore, in this case the family of solutions coincides with the orbit $G(E_2/2)$ of the quasimodular form $E_2(\tau)/2$.

\textbf{\textit{Case 2.}} $\Delta\equiv 0$. Let $Q=u^2$ and $R = u^3$. Put $w:=y-u$. Then, from the equalities $Q = y^2-6y'$ and $Q'=\frac{2}{3}u^2(y-u)$, it follows that $w'=w^2/6$. Solving this differential equation, we obtain the following general form of the solution in this case:
$$y(\tau)= \frac{A}{(c\tau+d)^2} - \frac{6c}{c\tau+d},$$
where $A,c,$ and $d$ are arbitrary constants with $|c|+|d|\neq 0$. This family coincides with the orbit $G(0)$ of the zero solution when $A=0$, and with the orbit $G(1)$ of the constant solution when $A\neq 0$.

Thus, the theorem is proved.
\end{proof}

Note that, as observed in \cite{Buchstaber_Leikin_Pavlov03}, the monoid
$$\widetilde{G} = \left\{\left.\left(\begin{matrix} a & b\\ c & d\end{matrix}\right)\ \right| |c|+|d|>0 \right\}$$
of $(2\times 2)$-matrices acts on the space of solutions of the Chazy equation by the rule \eqref{action}. Under this action, nondegenerate solutions may pass to degenerate ones.

We are now ready to formulate the second main result of this section:

\begin{theorem}\label{modular_uniqeness}
Any one-parameter family $\G_{\Cc}(B_{\tau})$ of symmetric $2$-algebraic two-valued groups defined by the flow of the dynamical system \eqref{dyn_sys} is obtained from one of the three flow families of Theorem \ref{phase_flow} by the action of an element of the group $G=\SL_2(\Cc)$ in the sense of \eqref{action}. Thus, the classes of $G$-equivalence of one-parameter families $\G_{\Cc}(B_{\tau})$ are in one-to-one correspondence with the $G$-orbits $G(E_2)$, $G(0)$, and $G(1)$ of the solutions $y = E_2$, $y = 0$, and $y = 1$ of the Chazy equation \eqref{Chazy_equation}.
\end{theorem}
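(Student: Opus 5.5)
The plan is to reduce the classification of one-parameter families $\G_{\Cc}(B_{\tau})$ to the classification of solutions of the Chazy equation. This uses the correspondence of Theorem \ref{phase_flow} together with the three-orbit decomposition $\Sol = G(E_2)\sqcup G(0)\sqcup G(1)$ proved just above.

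The first step is to show that a flow of \eqref{dyn_sys} is completely determined by the single function $y=-\lambda k_2/4$. Given a trajectory $(k_2,k_4,k_6)$, the system gives
$$k_4=\frac{2}{\lambda}k_2'=-\frac{8}{\lambda^2}y',\qquad k_6=\frac{4}{3\lambda}k_4'=-\frac{32}{3\lambda^3}y''.$$
Moreover $k_8=\lambda^{-1}k_6'$, so the whole polynomial $B_\tau$ is a function of $y,y',y''$.

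Conversely, Theorem \ref{phase_flow}(ii) shows two facts. First, the third equation of \eqref{dyn_sys}, combined with the associativity relation $4k_8=k_4^2-k_2k_6$, is equivalent to $y$ solving \eqref{Chazy_equation}. Second, the triple built from any solution $y\in\Sol$ by the formulas above is a trajectory. This gives a bijection between flows of \eqref{dyn_sys} for fixed $\lambda$ and solutions $y\in\Sol$, and hence between the families $\G_{\Cc}(B_\tau)$ and $\Sol$.

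The second step is to transport the action \eqref{action} along this bijection. I define the action of $g\in G$ on a family by acting on $y$ via $y\mapsto y\|_2 g$ and then recomputing $k_2,k_4,k_6$ from the formulas above. This is what "in the sense of \eqref{action}" should mean. Since $\|_2 g$ preserves $\Sol$, the transformed triple is again a trajectory, so the associativity relation still holds and the result is again a family of two-valued groups.

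A rescaling issue must be addressed here. The orbit theorem was proved for the normalization $y'''=2yy''-3y'^2$ with $\dd/\dd\tau$. For \eqref{Chazy_equation} with $D=(2\pi i)^{-1}\dd/\dd\tau$, the factor $6c/(c\tau+d)$ in \eqref{action} must be read with the corresponding constant $12c/(2\pi i)$ or similar. I would check this on $E_2$, whose quasimodular transformation law fixes the correct constant. The orbit representatives $E_2,0,1$ are unaffected, because the equation's scaling symmetry $y\mapsto \mu y$, $\tau\mapsto\tau/\mu$ lies within the same orbit structure.

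The third step is to identify the three families of Theorem \ref{phase_flow}:
\begin{itemize}
\item $y=E_2$, with $\lambda=-1/\pi^2$, giving Proposition \ref{a_and_E};
\item $y=0$, giving the cuspidal family $k_2=k_4=k_6=0$;
\item $y=1$ after normalizing $\lambda$ (or scaling $k_2$), giving the constant nodal family.
\end{itemize}
The orbit decomposition then says every $y\in\Sol$ equals $y_0\|_2 g$ for exactly one representative $y_0$, and some $g\in G$. Consequently, every family is $G$-equivalent to exactly one of the three, which is the claimed one-to-one correspondence. Disjointness of the orbits transfers directly through the bijection, so distinct representatives give inequivalent families.

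I expect the main obstacle to be bookkeeping rather than anything conceptual. The constants $\lambda$, the factor $2\pi i$, and the halving $y\mapsto y/2$ used in the orbit theorem must line up, so that the representatives match the nodal normalization $k_2\equiv1$ and the $E_2$-family exactly. I would settle this by fixing $\lambda$ once, transporting all three representatives explicitly, and noting that a constant rescaling of $y$ maps $\Sol$ to $\Sol$ and preserves the three orbits.
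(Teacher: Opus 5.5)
Your proposal is correct and follows the route the paper intends. The paper gives no separate proof of this theorem: it treats it as immediate from the correspondence $y=-\lambda k_2/4$ of Theorem \ref{phase_flow}(ii) and the decomposition $\Sol=G(E_2)\sqcup G(0)\sqcup G(1)$, which is what you do. You are also right that for the $D$-normalized equation the constant in \eqref{action} must be $12c/(2\pi i)$ rather than $6c$, as the transformation law of $E_2$ confirms.

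One correction to your bookkeeping. A bare rescaling $y\mapsto\mu y$ does not preserve $\Sol$, but $y(\tau)\mapsto\mu\,y(\mu\tau)$ does, since it is the action of the diagonal element of $G$. This already puts the nodal family $k_2\equiv1$, i.e.\ $y\equiv-\lambda/4$, into $G(1)$ for any fixed $\lambda$, so no renormalization of $\lambda$ is needed.
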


\section{Ramanujan System and Gauss–Manin Connections}\label{Ramanujan_System_and_Gauss_Manin_Connections}

In this section, the associativity condition for the universal $2$-algebraic two-valued group is described in the context of the Gauss--Manin connection.

\subsection{Basics of Gauss–Manin Connections}\label{recollection_on_Gauss-Manin_connection}

Let us recall the construction first introduced by Manin in \cite{Manin58} and later called the Gauss--Manin connection, following \cite{Katz_Oda68}. Let $\pi:\E\to\cT$ be a smooth proper morphism between complex algebraic varieties. Consider the sheaf
\[
\Omega^1_{\E/\cT} \;:=\; \Omega^1_{\E}\big/\pi^*\Omega^1_{\cT}
\]
of relative $1$-forms and the relative de Rham complex
$$\Omega^\bullet_{\E/\cT}:\quad
\Oo_{\E}\xrightarrow{\dd_{\E/\cT}}
\Omega^1_{\E/\cT}\xrightarrow{\dd_{\E/\cT}}
\Omega^2_{\E/\cT}\to\cdots$$
of $\Oo_{\E}$-modules.

Let
\[
\cH^n:=\Hhh^n_{\dR}(\E /\cT)=\Rrr^n\!\pi_{\ast}(\Omega^\bullet_{\E/\mathcal T})
\]
denote the sheaf of relative de Rham cohomology. Since the morphism $\pi$ is smooth and proper, $\cH^n$ is a locally free $\Oo_{\cT}$-module, i.e. a vector bundle. The base change isomorphism with respect to the morphism $i:\Spec k(t)\to \cT$ shows that the fiber of this bundle over a closed point $t\in\cT$ is the vector space
$$\cH^n\otimes_{\Oo_{\cT}} k(t) \;\cong\; H^n_{\mathrm{dR}}(\mathcal E_t),$$
the $n$th de Rham cohomology of the variety
\[
\E_t:=\E\times_{\cT}\Spec k(t)
\]
in the fiber over the point $t\in\cT$.

By the definition of relative differentials, one has a short exact sequence of sheaves:
\begin{equation}\label{short_exact_seq_of_relative_differentials}0\to \pi^*\Omega^1_{\cT}\to \Omega^1_{\E}\to \Omega^1_{\E/\cT}\to0. \end{equation}
This sequence induces an increasing filtration on the absolute de Rham complex $\Omega_{\E}^{\bullet}$: for all nonnegative integers $n$ and $p$, define $F^p\Omega_{\E}^n$ to be the subsheaf locally generated by differential forms each of which contains at least $p$ factors from the space $\pi^\ast\Omega_{\cT}^1$, i.e.
\[
F^p:=F^p\Omega^n_{\E} = \pi^*\Omega^p_{\cT}\wedge \Omega^{n-p}_{\E}.
\]
This filtration gives a short exact sequence of complexes of sheaves:
\begin{equation}\label{exact_seq_of_complexes_induced_by_filtration}0 \to \pi^*\Omega^1_{\cT}\otimes_{\Oo_{\cT}} \Omega^{\bullet-1}_{\E/\cT}
\to \Omega^\bullet_{\E}/F^2
\to \Omega^\bullet_{\E/\cT}
\to 0.\end{equation}

The connecting homomorphism in the long exact sequence for the derived functor $\Rrr^n\!\pi_\ast$ associated with the short exact sequence \eqref{exact_seq_of_complexes_induced_by_filtration} has the form
\begin{equation}\label{nabla_introduction}\delta:\; \Rrr^n\pi_*\Omega_{\E/\cT}^\bullet
\longrightarrow
\Rrr^{n+1}\pi_*\bigl(\pi^*\Omega_{\cT}^1\otimes \Omega_{\E/\cT}^{\bullet-1}\bigr).\end{equation}
By the projection formula, we obtain an isomorphism
$$\Rrr^{n+1}\pi_*\bigl(\pi^*\Omega_{\cT}^1\otimes \Omega_{\E/\cT}^{\bullet-1}\bigr)
\;\cong\;
\Omega_{\cT}^1\otimes \Rrr^n\pi_*\Omega_{\E/\cT}^\bullet.$$
Taking this isomorphism into account, the map \eqref{nabla_introduction} takes the form
\begin{equation}\label{GM_connection}
\nabla:\; \cH^n:=\Rrr^n\pi_*\Omega_{\E/\cT}^\bullet
\longrightarrow
\Omega_{\cT}^1\otimes \cH^n
\end{equation}
and is called the {\it Gauss--Manin connection}.

Let us describe the map \eqref{GM_connection} explicitly. Let $s$ be a local section of the bundle $\cH^n$ over a trivializing open set $U\subset\cT$. Then, by definition, $s$ is represented by a relative closed $n$-form
\[
\eta \in \Gamma(\pi^{-1}(U),\Omega^n_{\mathcal E/\mathcal T}).
\]
Choose a lift
\[
\widetilde\eta \in \Gamma(\pi^{-1}(U),\Omega^n_{\E})
\]
of the form $\eta$ with respect to the projection $\Omega^n_{\E}\to \Omega^n_{\E/\cT}$. Compute the absolute differential
\[
\dd\widetilde\eta\in \Gamma(\pi^{-1}(U),\Omega^{n+1}_{\mathcal E}).
\]
Modulo terms containing at least two differentials from the base, the form $\dd\widetilde\eta$ defines a local section of the sheaf $\pi^*\Omega^1_{\cT}\otimes \Omega^n_{\E/\cT}$, i.e.
\[
\dd\widetilde\eta \equiv \sum_i \pi^*(\alpha_i)\wedge \beta_i \pmod{F^2}.
\]
Then
$$\nabla s =\sum_i \alpha_i\otimes [\beta_i]\in \Omega_{\cT}^1\otimes \cH^n.$$
Thus, one may think of the connection $\nabla$ as the connecting differential in the de Rham complex.

The Leibniz rule
$$\nabla(f\cdot s)=\dd f\otimes s+f\nabla(s)$$
follows from the Leibniz rule for the differential in the de Rham complex.

In fact, the connection $\nabla$ is the first differential in the spectral sequence of the filtered de Rham complex $\Omega^{\bullet}_{\E}$. Consequently, $\nabla^2=0$, and the Gauss--Manin connection is flat.

\subsection{Ramanujan Vector Field}

Let us now recall the geometric interpretation of the Ramanujan relations. Consider the family $\pi:\E \to \cT$ of elliptic curves
\begin{equation}
\label{eq:universal}
\E:\qquad
zy^2=4(x-t_1z)^3-t_2z^2(x-t_1z)-t_3z^3
\qquad\subset \mathbb{P}^2\times \cT,
\end{equation}
parameterized by the affine variety
\[
\cT \;=\; \Spec \Cc[t_1,t_2,t_3,\Delta^{-1}],
\]
where $\Delta \;=\; 27t_3^2-t_2^3$ is the discriminant of the polynomial on the right-hand side of the equation of the curve\footnote{In other words, the ring $\Cc[t_1,t_2,t_3,\Delta^{-1}]$ is the coordinate ring of the complement of the discriminant locus in $\Cc^3$.}.

Fix the basis $(\alpha, \omega)$ of the space $\Hhh^1_{\dR}(\E /\cT)$:
\begin{equation}
\alpha=\Bigl[\frac{\dd x}{y}\Bigr],\qquad
\omega=\Bigl[\frac{x\,\dd x}{y}\Bigr].
\end{equation}

The following statement is obtained by specializing the construction described in Section \ref{recollection_on_Gauss-Manin_connection} to hypersurfaces, via the Griffiths--Dwork reduction.

\begin{prop}[{\cite[Proposition 6]{Movasati2012}}]
Consider the $1$-form
\[
\theta \;:=\; 3t_3\,\dd t_2-2t_2\,\dd t_3 \in \Omega^1_{\cT}.
\]
Then, in the basis $(\alpha, \omega)$, the Gauss--Manin connection is given by the formula
\begin{equation}
\label{eq:GMmatrix}
\nabla
\begin{pmatrix}\alpha\\ \omega\end{pmatrix}
\;=\;
A\,
\begin{pmatrix}\alpha\\ \omega\end{pmatrix},
\end{equation}
where
\begin{equation}
\label{eq:A}
A \;=\; \frac{1}{\Delta}
\begin{pmatrix}
-\frac{3}{2}t_1\theta-\frac{1}{12}\dd\Delta & \frac{3}{2}\theta\\[4pt]
\Delta\,\dd t_1-\frac{1}{6}t_1\dd\Delta-\Bigl(\frac{3}{2}t_1^2+\frac{1}{8}t_2\Bigr)\theta &
\frac{3}{2}t_1\theta+\frac{1}{12}\dd\Delta
\end{pmatrix}
\end{equation}
and $\Delta = 27t_3^2-t_2^3$.
\end{prop}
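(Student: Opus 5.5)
We compute the Gauss--Manin connection directly on the affine chart $z=1$, following the recipe of Section \ref{recollection_on_Gauss-Manin_connection}. Write $P(x,t)=4(x-t_1)^3-t_2(x-t_1)-t_3$, so the fibres are $y^2=P(x)$. For a relative form $\eta=q(x)\,\dd x/y$, we lift it to $\E$ with the same expression and differentiate along the base. Differentiating $y^2=P$ gives $2y\,\dd_{\cT}y=\dd_{\cT}P$, where $\dd_{\cT}P=\partial_{t_1}P\,\dd t_1+\partial_{t_2}P\,\dd t_2+\partial_{t_3}P\,\dd t_3$. Hence, modulo $F^2$,
$$\dd\widetilde\eta\equiv -\frac{q(x)\,\dd_{\cT}P}{2P}\,\frac{\dd x}{y},$$
and $\nabla\eta$ is the class of $-\tfrac12 q\,\dd_{\cT}P\,\dd x/(yP)$ in $\Omega^1_{\cT}\otimes\cH^1$.

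The main step is to reduce forms $r(x)\,\dd x/(yP)$ with $\deg r\le 3$ to combinations of $\dd x/y$ and $x\,\dd x/y$ modulo exact forms. This is the Griffiths--Dwork reduction. Since $\Delta\neq 0$, $P$ and $P'$ are coprime, so there are polynomials $a,b$ with $aP+bP'=\Delta\cdot r$. We then use the exactness identity
$$\dd\Bigl(\frac{b}{y}\Bigr)=\frac{b'}{y}\,\dd x-\frac{bP'}{2yP}\,\dd x,$$
which replaces $bP'/(yP)$ by $2b'/y$. This gives
$$\frac{r\,\dd x}{yP}\equiv\frac{1}{\Delta}\Bigl(a+2b'\Bigr)\frac{\dd x}{y}.$$
The resulting polynomial $a+2b'$ may have degree up to $2$. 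The $x^2\,\dd x/y$ term is removed using $\dd(y)=P'\,\dd x/(2y)$, since $P'$ is quadratic with leading coefficient $12$.

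To keep the computation short, we use the substitution $x\mapsto x-t_1$. This shows that $t_1$ enters only through the translation $\omega\mapsto\omega+t_1\alpha$. So we first compute with $t_1=0$, where only $\dd t_2$ and $\dd t_3$ appear, and then conjugate by the translation. This conjugation produces the $\dd t_1$ entry $\Delta\,\dd t_1/\Delta=\dd t_1$ and the $t_1$-dependent entries of \eqref{eq:A}.

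For $t_1=0$ and $P=4x^3-t_2x-t_3$, we use explicit Bézout identities for $1$ and $x$ against $P$ and $P'=12x^2-t_2$, with $\Delta=27t_3^2-t_2^3$. These combine into the form $\theta=3t_3\,\dd t_2-2t_2\,\dd t_3$ and $\dd\Delta$. The expected outcome is the matrix with entries $-\tfrac1{12}\dd\Delta$, $\tfrac32\theta$, $-\tfrac18t_2\theta$ and $\tfrac1{12}\dd\Delta$, all divided by $\Delta$. As consistency checks, the trace should be $0$, since $\alpha\wedge\omega$ is the constant cup-product pairing. The answer should also be homogeneous under the weights $\deg t_k=2k$, $\deg x=2$, $\deg y=3$.

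We expect the main obstacle to be the bookkeeping in the reduction step. In particular, we must get the correct coefficients in $\theta$ and $\dd\Delta$, which requires exact Bézout coefficients. As a safeguard, we will verify the result by checking flatness $\dd A=A\wedge A$. We will also check that the periods $\int_\gamma\alpha$ and $\int_\gamma\omega$ satisfy the corresponding Picard--Fuchs system.
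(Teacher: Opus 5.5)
Your proposal is correct and follows the route the paper indicates. The paper gives no argument beyond citing Movasati and remarking that \eqref{eq:A} is obtained by specializing the connecting-homomorphism description of $\nabla$ to hypersurfaces via Griffiths--Dwork reduction; that is exactly your computation, and the translation $x\mapsto x-t_1$ is a convenient shortcut. The bookkeeping you were worried about also closes up: from $(18t_2x-27t_3)P+(-6t_2x^2+9t_3x+t_2^2)P'=\Delta$ one gets $\dd x/(yP)\equiv-\Delta^{-1}(9t_3+6t_2x)\,\dd x/y$ and $x\,\dd x/(yP)\equiv\Delta^{-1}\bigl(\frac{1}{2}t_2^2+9t_3x\bigr)\,\dd x/y$, which give exactly your predicted $A_0$, and the gauge formula $A=\dd S\,S^{-1}+SA_0S^{-1}$ with $S=\begin{pmatrix}1&0\\ t_1&1\end{pmatrix}$ reproduces \eqref{eq:A} entry by entry.
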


\begin{defi}
The {\it Ramanujan vector field} on the base $\cT$ of the family $\pi:\E\to\cT$ of elliptic curves is the vector field $\vF$ satisfying the following horizontality conditions:
\begin{equation}
\label{horiz}
\nabla_{\vF}\alpha=-\omega,\qquad \nabla_{\vF}\omega=0.
\end{equation}
\end{defi}

\begin{prop}[{\cite[Proposition 7]{Movasati2012}}]
\label{thm:vF}
The Ramanujan vector field
$$\vF = F_1\frac{\partial}{\partial t_1}+F_2\frac{\partial}{\partial t_2}+F_3\frac{\partial}{\partial t_3}$$
on the base $\cT$ of the family $\pi:\E\to\cT$ of elliptic curves exists and is unique. The following formulas hold:
\begin{equation}
\label{eq:vFcomponents}
F_1=t_1^2-\frac{1}{12}t_2,\qquad
F_2=4t_1t_2-6t_3,\qquad
F_3=6t_1t_3-\frac{1}{3}t_2^2.
\end{equation}
\end{prop}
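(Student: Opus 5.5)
The plan is to reduce the horizontality conditions \eqref{horiz} to a linear system for the unknown components $F_1,F_2,F_3$, using the explicit connection matrix \eqref{eq:GMmatrix}--\eqref{eq:A}. Contracting the $1$-forms in $A$ with $\vF$ gives
\[
\nabla_{\vF}\alpha=A_{11}(\vF)\,\alpha+A_{12}(\vF)\,\omega,\qquad \nabla_{\vF}\omega=A_{21}(\vF)\,\alpha+A_{22}(\vF)\,\omega .
\]
Since $\alpha,\omega$ form a basis of $\Hhh^1_{\dR}(\E/\cT)$ over $\Oo_{\cT}$, conditions \eqref{horiz} are equivalent to four scalar equations:
\[
A_{11}(\vF)=0,\qquad A_{12}(\vF)=-1,\qquad A_{21}(\vF)=0,\qquad A_{22}(\vF)=0 .
\]
The matrix $A$ is traceless, so the first and last equations coincide. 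Hence only three equations remain for three unknowns, which already suggests existence and uniqueness.

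Solving them in order goes as follows. The equation $A_{12}(\vF)=-1$ reads $\tfrac32\theta(\vF)=-\Delta$, that is, $\theta(\vF)=-\tfrac23\Delta$. Then $A_{22}(\vF)=0$, i.e. $\tfrac32t_1\theta(\vF)+\tfrac1{12}\dd\Delta(\vF)=0$, gives $\dd\Delta(\vF)=12t_1\Delta$. Substituting both values into $A_{21}(\vF)=0$ produces
\[
\Delta F_1-2t_1^2\Delta+\Bigl(t_1^2+\tfrac1{12}t_2\Bigr)\Delta=0 .
\]
Since $\Delta$ is invertible on $\cT$, this forces $F_1=t_1^2-\tfrac1{12}t_2$.

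Next, $\theta$ and $\dd\Delta$ involve only $\dd t_2,\dd t_3$, so the two remaining equations form a linear system in $(F_2,F_3)$:
\[
3t_3F_2-2t_2F_3=-\tfrac23\Delta,\qquad -3t_2^2F_2+54t_3F_3=12t_1\Delta .
\]
Its determinant is $162t_3^2-6t_2^3=6\Delta$, which is a unit in $\Cc[t_1,t_2,t_3,\Delta^{-1}]$. Therefore the solution exists and is unique, and a priori it is regular on $\cT$.

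It remains to verify by direct substitution that $F_2=4t_1t_2-6t_3$ and $F_3=6t_1t_3-\tfrac13t_2^2$ satisfy both equations. For the first one:
\[
3t_3(4t_1t_2-6t_3)-2t_2\Bigl(6t_1t_3-\tfrac13t_2^2\Bigr)=-18t_3^2+\tfrac23t_2^3=-\tfrac23\Delta .
\]
The second equation is checked in the same way. In fact Cramer's rule shows that the solution is polynomial: the $\Delta$ in the numerators cancels the $\Delta^{-1}$ coming from the determinant.

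The only potentially delicate point is the use of the explicit matrix \eqref{eq:A}, which we take from Movasati's Proposition; after that the argument is pure linear algebra over $\Oo_{\cT}$. The main care needed is in the signs when expanding $\dd\Delta(\vF)=54t_3F_3-3t_2^2F_2$.
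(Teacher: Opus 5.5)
Your proposal is correct and takes essentially the same approach as the paper: you reduce the horizontality conditions to $A(\vF)=\begin{pmatrix}0&-1\\0&0\end{pmatrix}$, extract $\theta(\vF)=-\tfrac23\Delta$ and $\dd\Delta(\vF)=12t_1\Delta$, solve the resulting $2\times 2$ linear system for $(F_2,F_3)$, and read off $F_1$ from the lower-left entry. Using the $(2,2)$ entry instead of the paper's $(1,1)$ entry is harmless because $A$ is traceless, and your remarks that the fourth equation is therefore redundant and that the determinant $6\Delta$ is a unit on $\cT$ make explicit the existence and uniqueness the paper only calls ``automatic.''
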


For completeness, let us give a proof of this proposition (it was omitted in \cite{Movasati2012}).

\begin{proof}
Let
\[
v=F_1\partial_{t_1}+F_2\partial_{t_2}+F_3\partial_{t_3}
\]
be a vector field on the base $\cT$ satisfying the horizontality conditions \eqref{horiz}. Evaluating \eqref{eq:GMmatrix} on the vector field $v$, we obtain
\[
\nabla_v
\begin{pmatrix}\alpha\\ \omega\end{pmatrix}
\;=\;
A(v)\,
\begin{pmatrix}\alpha\\ \omega\end{pmatrix},
\qquad
A(v)\in M_2(\mathcal \Oo_{\cT}).
\]

On the one hand, the conditions $\nabla_v\alpha=-\omega$ and $\nabla_v\omega=0$ are equivalent to the fact that
\begin{equation}
\label{eq:Av}
A(v)=
\begin{pmatrix}
0 & -1\\
0 & 0
\end{pmatrix}.
\end{equation}
On the other hand, the matrix $A(v)$ is given by formula \eqref{eq:A}.

Comparing the upper-right entries of the matrices \eqref{eq:A} and \eqref{eq:Av}, we obtain the equation
\begin{equation}
\label{eq:theta1}
\frac{3}{2}\frac{\theta(v)}{\Delta}=-1
\qquad\Longrightarrow\qquad
\theta(v)=-\frac{2}{3}\Delta.
\end{equation}
The upper-left entry of the matrix $A(v)$ gives
\begin{equation}\label{eq:theta2}
\frac{-\frac{3}{2}t_1\theta(v)-\frac{1}{12}\dd\Delta(v)}{\Delta}=0
\qquad\Longrightarrow\qquad
\dd\Delta(v)= -18t_1\,\theta(v)=12t_1\Delta.
\end{equation}
Now let us compute $\theta(v)$ and $\dd\Delta(v)$ in terms of $F_2$ and $F_3$:
\[
\theta(v)=3t_3F_2-2t_2F_3,\qquad
\dd\Delta(v)= (54t_3)F_3-(3t_2^2)F_2.
\]

From equations \eqref{eq:theta1} and \eqref{eq:theta2}, we obtain the following linear system of equations for $F_2$ and $F_3$:
\begin{equation}\label{linear_system_on_F_2_and_F_3}
\begin{cases}
3t_3F_2-2t_2F_3&=-\frac{2}{3}\Delta,\\
-3t_2^2F_2+54t_3F_3&=12t_1\Delta.
\end{cases}
\end{equation}
Solving system \eqref{linear_system_on_F_2_and_F_3} gives
\[
F_2=4t_1t_2-6t_3
\qquad\text{and}\qquad
F_3=6t_1t_3-\frac13t_2^2.
\]

Finally, for the lower-left entry of the matrix \eqref{eq:Av}, we have
\begin{equation}\label{F_1_equation}
\frac{\Delta\,\dd t_1(v)-\frac{1}{6}t_1\dd\Delta(v)-\bigl(\frac{3}{2}t_1^2+\frac{1}{8}t_2\bigr)\theta(v)}{\Delta}=0.
\end{equation}
Using the facts that $\dd t_1(v)=F_1$, $\dd\Delta(v)=12t_1\Delta$, and $\theta(v)=-\frac23\Delta$, equation \eqref{F_1_equation} yields
\[
F_1=t_1^2-\frac{1}{12}t_2.
\]
We have thus obtained formulas \eqref{eq:vFcomponents} for the Ramanujan vector field $\vF$. Uniqueness follows automatically.
\end{proof}

Define the curve
\begin{equation}
\label{eq:Phi}
\Phi(\tau)\;:=\;\Bigl(\frac{1}{12}E_2(\tau),\ \frac{1}{12}E_4(\tau),\ \frac{1}{216}E_6(\tau)\Bigr).
\end{equation}
on the base $\cT$.

\begin{prop}
\label{prop:Ramanujan}
The Ramanujan system of differential equations \eqref{Ramanujan} for the Eisenstein series is equivalent to the condition that $\Phi$ is an integral curve of the vector field $\vF$, i.e.
\[
\Phi'(\tau)=\vF_{\Phi(\tau)},
\]
where $(-)' = 1/(2\pi i)\dd/\dd\tau$.
\end{prop}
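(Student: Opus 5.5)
Write $\Phi(\tau)=(t_1(\tau),t_2(\tau),t_3(\tau))$ with $t_1=E_2/12$, $t_2=E_4/12$, $t_3=E_6/216$. The claim $\Phi'=\vF_{\Phi}$ is the system of three scalar ODEs $t_j'=F_j(t_1,t_2,t_3)$, $j=1,2,3$, with $F_j$ given by \eqref{eq:vFcomponents}. The plan is to substitute the parametrization into each equation and, after clearing the constant factors, compare it with the corresponding Ramanujan identity in \eqref{Ramanujan}. Since $\Phi$ is defined by a constant diagonal rescaling of $(E_2,E_4,E_6)$, each of the three equations will be equivalent to exactly one Ramanujan identity. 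This gives both directions of the equivalence simultaneously.

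For the first component, $t_1'=E_2'/12$ and
$$F_1(\Phi)=\frac{E_2^2}{144}-\frac{E_4}{144}=\frac{1}{12}\cdot\frac{E_2^2-E_4}{12}.$$
Hence $t_1'=F_1$ holds if and only if $E_2'=(E_2^2-E_4)/12$.

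For the second component, $t_2'=E_4'/12$ and
$$F_2(\Phi)=4\cdot\frac{E_2}{12}\cdot\frac{E_4}{12}-6\cdot\frac{E_6}{216}=\frac{E_2E_4}{36}-\frac{E_6}{36}=\frac{1}{12}\cdot\frac{E_2E_4-E_6}{3}.$$
Hence $t_2'=F_2$ holds if and only if $E_4'=(E_2E_4-E_6)/3$.

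For the third component, $t_3'=E_6'/216$ and
$$F_3(\Phi)=6\cdot\frac{E_2}{12}\cdot\frac{E_6}{216}-\frac13\cdot\frac{E_4^2}{144}=\frac{E_2E_6}{432}-\frac{E_4^2}{432}=\frac{1}{216}\cdot\frac{E_2E_6-E_4^2}{2}.$$
Hence $t_3'=F_3$ holds if and only if $E_6'=(E_2E_6-E_4^2)/2$.

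The three equivalences together prove the proposition. Everything above is routine bookkeeping; the only real risk is a mismatch between the chosen scaling of $\Phi$ and the normalization of $(-)'=\frac{1}{2\pi i}\frac{\dd}{\dd\tau}$. The matching factors $1/12$, $1/12$ and $1/216$ that appear on both sides of each equation confirm that these conventions are compatible.

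It would also be worth checking that $\Phi(\tau)$ actually lies in $\cT$, i.e. that the discriminant does not vanish along the curve. Here
$$\Delta(\Phi)=27t_3^2-t_2^3=\frac{E_6^2-E_4^3}{1728},$$
which is a nonzero multiple of the modular discriminant and so is nonzero on $\Hh$. The curve therefore stays inside the base $\cT$, where the Ramanujan vector field $\vF$ is defined.
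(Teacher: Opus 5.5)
Your proof is correct and follows the same route as the paper. You substitute $t_1=E_2/12$, $t_2=E_4/12$, $t_3=E_6/216$ into the components $F_j$ and note that each equation $t_j'=F_j$ is a constant multiple of the corresponding Ramanujan identity, which gives both directions at once. Your extra check that $\Delta(\Phi)=(E_6^2-E_4^3)/1728$ does not vanish on $\Hh$, so that $\Phi$ actually lies in $\cT$, is a worthwhile detail the paper leaves implicit.
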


\begin{proof}
Denote the components of $\Phi(\tau)$ by $t_1(\tau)$, $t_2(\tau)$, and $t_3(\tau)$. Using \eqref{Ramanujan} and \eqref{eq:Phi}, we compute
\[
t_1'=\frac{1}{12}E_2'=\frac{1}{144}(E_2^2-E_4)=t_1^2-\frac{1}{12}t_2,
\]
and similarly
\[
t_2'=4t_1t_2-6t_3,
\qquad
t_3'=6t_1t_3-\frac{1}{3}t_2^2.
\]
These are exactly the coordinate expressions \eqref{eq:vFcomponents}, hence
\[
\Phi'=\vF_{\Phi(\tau)}.
\]

Conversely, if $\Phi$ is an integral curve of $\vF$, then the three equalities
\[
t_i'=F_i(t_1,t_2,t_3)
\]
together with \eqref{eq:Phi} are equivalent to \eqref{Ramanujan} after the substitutions
\[
E_2 :=12t_1,\qquad E_4 :=12t_2,\qquad E_6 :=216t_3.
\]
\end{proof}

Note that, in addition to the vector field $\vF$ on the base
\[
\cT = \Cc^3\backslash\{\Delta=0\}
\qquad
(\text{recall that } \Delta = 27t_3^2-t_2^3)
\]
of the family $\pi:\E\to\cT$ of elliptic curves, there are also two vector fields
$$\uF = -6t_3 \frac{\partial}{\partial t_3}
    - 4t_2 \frac{\partial}{\partial t_2}
    - 2t_1 \frac{\partial}{\partial t_1},
\qquad
\wF = \frac{\partial}{\partial t_1},$$
such that the vector fields $\uF$, $\vF$, and $\wF$ are linearly independent at every point of the base $\cT$. It is easy to check that each of these vector fields is tangent to the discriminant locus $\{\Delta = 0\}$. The theory of singularities arising in the context of holomorphic vector fields tangent to discriminants is discussed in detail in the monograph \cite[Section 4.1]{Arnold90}.

\section{Frobenius Structures}\label{Frobenius_Structures}

The theory of Frobenius algebras intersects the theory of $n$-valued groups. In this section, concrete examples of these intersections will be considered. It will be shown that the associativity condition $4k_8=k_4^2-k_6k_2$ for the two-valued multiplication can be interpreted as the associativity condition of a certain Frobenius structure on $\R^3$.

\subsection{Frobenius $n$-Homomorphisms and Representation Theory}

In \cite[Section 2]{Buchstaber_Rees02}, for every unital associative algebra over $\Cc$ and every positive integer $n$, the notion of a Frobenius $n$-homomorphism $\Phi_n$ was introduced. Consider the linear map assigning to a tracial linear functional $f:A\to \Cc$ (i.e. $f(a_1a_2)=f(a_2a_1)$ for all $a_1,a_2\in A$) the linear map
$$\Phi_n(f):A^{\otimes n}\to \Cc$$
by the following recursive rule:
\begin{equation}
\begin{aligned}
\Phi_1(f) &= f,\\
\Phi_2(f)(a_1\otimes a_2) &= f(a_1)f(a_2) - f(a_1a_2),\\
\Phi_{n+1}(f)(a_1\otimes\cdots\otimes a_{n+1})
&=
f(a_1)\,\Phi_n(f)(a_2\otimes\cdots\otimes a_{n+1})\\
&\quad-\sum_{i=2}^{n+1}
\Phi_n(f)(a_2\otimes\cdots\otimes a_{i-1}\otimes (a_1a_i)\otimes a_{i+1}\otimes\cdots\otimes a_{n+1}).
\end{aligned}
\end{equation}

\begin{defi}\label{Frob_n-hom}
A linear functional $f:A\to \Cc$ is called a {\it Frobenius $n$-homomorphism} if $f(1)=n$ and $\Phi_{n+1}(f)=0$.
\end{defi}

Definition \ref{Frob_n-hom} is based on Frobenius's results in the representation theory of finite groups. To make this connection more transparent, let us rewrite the definition of the map $\Phi_n$ in an equivalent form (see \cite[Section 3, page 134]{Buchstaber_Rees04}):
\begin{equation}\label{Phi_n_as_sum_over_cycles}
\Phi_n(f) = \sum\limits_{\sigma\in S_n}\sgn(\sigma)f_{\cyc(\sigma)},
\end{equation}
where $\sgn$ is the sign of a permutation,
$$f_{\cyc(\sigma)}(a_1,\dots,a_n) = f_{\gamma_1}(a_1,\dots,a_n)\cdots f_{\gamma_r}(a_1,\dots,a_n),$$
$\sigma = \gamma_1\cdots\gamma_r$ is the decomposition of the permutation into disjoint cycles, and for each cycle $\gamma=(i_1,\dots,i_m)$ one sets
$$f_{\gamma}=f(a_{i_1}\cdots a_{i_m}).$$

If one takes as the algebra $A$ the matrix algebra $\Mat_d(R)$ over an arbitrary commutative $\Q$-algebra, and as the map $f$ the trace $\tr$, then one obtains formula (4) from the survey \cite{Conrad}:
\begin{equation}\label{fancy_determinant_formula}
\det(M) = \frac{1}{d!}\sum\sgn(\sigma)\tr_{\sigma}(M)
\end{equation}
for every matrix $M$. As Conrad notes, a similar formula was used by Frobenius in \cite[Equation 8, Section 3]{Frobenius1896}.

In \cite[page 134]{Buchstaber_Rees04}, the polynomials
\begin{equation}\label{F_n_polynomials}
F_n(s_1, ..., s_n) = \sum\limits_{\sigma\in S_n}\prod\limits_{k=1}^n\left((-1)^{k+1}s_k\right)^{m_k(\sigma)},
\end{equation}
were introduced, where $m_k(\sigma)$ denotes the number of cycles of length $k$ in the decomposition of the permutation $\sigma$ into disjoint cycles. It is noted that when $s_k=f(a^k)$, the polynomial $F_n(s_1,\dots,s_n)$ becomes the polarization of the $n$-linear map $\Phi_n(f)$, i.e. the equality
\begin{equation}\label{polarization_formula}
\Phi_n(f)(a,\dots,a)=F_n(f(a), ..., f(a^n))
\end{equation}
holds.

From \eqref{fancy_determinant_formula}, \eqref{F_n_polynomials}, and \eqref{polarization_formula}, for every matrix $M\in\Mat_n(R)$ one obtains the equalities
\begin{equation}
\det(M) =  \frac{1}{n!} F_n(\tr(M),\tr(M^2),\dots,\tr(M^n)) = \frac{1}{n!}\Phi_n(\tr)(M,\dots,M).
\end{equation}

\begin{defi}
Let $M_n$ be the $\Z$-module generated by the characters of irreducible representations of the symmetric group $S_n$ over $\Cc$. Let $\chi\in M_n$. Then the {\it Frobenius characteristic map} is defined by
\begin{equation}\label{Frobenius_characteristic_map}
\chr(\chi)=\frac{1}{n!}\sum\limits_{\sigma\in S_n}\chi(\sigma)p_{\cyc(\sigma)},
\end{equation}
where $p_{\cyc(\sigma)} = p_1^{m_1} p_2^{m_2} \cdots p_n^{m_n}$, for each $k$ the expression
$$p_k = \sum_i x_i^k\in\Cc[x_1,\dots,x_n]$$
is the Newton power-sum polynomial, and $m_k = m_k(\sigma)$ is the number of cycles of length $k$ in the decomposition of the permutation $\sigma$ into disjoint cycles.
\end{defi}

It is easy to see that formula \eqref{Frobenius_characteristic_map} is equivalent to the formula
$$\chr(\chi)=\sum\limits_{\lambda\vdash n}z_{\lambda}^{-1}\chi(\lambda)p_{\lambda},$$
where $z_{\lambda}=n!/|K_{\lambda}|$, $K_\lambda$ is the conjugacy class of permutations of cycle type $\lambda = (\lambda_1,\dots,\lambda_s)\vdash n$, and $p_\lambda=p_1^{\lambda_1}\cdots p_s^{\lambda_s}$.

\begin{example}
The map $\Phi_n$ defined by formula \eqref{Phi_n_as_sum_over_cycles} is a specialization of the Frobenius characteristic map. Namely, the specialization $p_k:=f(a^k)$ (for each $k=1,\dots,n$) and $\chi = \sgn$ in formula \eqref{Frobenius_characteristic_map} yields formula \eqref{Phi_n_as_sum_over_cycles} (modulo multiplication by $n!$).
\end{example}

\begin{example}\label{higher_n-characters}
Let $A=\Cc\!G$ be the group algebra of a finite group, and let $\chi$ be a character of a complex representation of $G$. Then for each positive integer $n$, the expression $\Phi_n(\chi)$ was called by Frobenius the {\it higher $n$-character} \cite{Frobenius1896, Vazirani03}.
\end{example}

A remarkable property of the Frobenius characteristic map is that it is a ring homomorphism and establishes an isometric isomorphism between the ring of characters of the symmetric group $S_n$ (with the standard inner product of characters) and the ring of homogeneous symmetric functions of degree $n$ (with the Hall inner product, given by the formula $\langle s_\lambda, s_\mu \rangle = \delta_{\lambda\mu}$) \cite[Chapter IV, Section 4]{Macdonald95}.

Let $\rho:G\to\GL_n(\Cc)$ be a representation of a finite group $G$, $\chi_\rho$ its character, $\{X_g\}_{g\in G}$ commuting formal variables indexed by the elements of the group $G$, $a=\sum\limits_{g\in G}X_g g\in\Cc\!G[\{X_g\}]$, and, finally, $M=M_\rho = \rho(a)$. Under this specialization, we obtain the formula
\begin{equation}\label{det_rho}
\det(\rho(a)) = \frac{1}{n!}\Phi_n(\chi_{\rho})(a, ..., a).
\end{equation}

\begin{defi}
The {\it group determinant $\Theta(G)$} of a finite group $G$ is the expression (in the notation above)
$$\Theta(G) = \det(M_{\mathrm{\rho_{\reg}}}),$$
where $\rho_{\reg}$ is the regular representation of the group $G$.
\end{defi}

\begin{example}
Taking in formula \eqref{det_rho} the regular representation $\rho_{\reg}$ of the group $G$ as $\rho$, we obtain the equality
$$\Theta(G)=\frac{1}{|G|!}\,\Phi_{|G|}(\chi_{\mathrm{reg}})(a,\dots,a).$$
\end{example}

Let us recall Frobenius's classical result on the group determinant:

\begin{theorem}[{(Frobenius, \cite{Frobenius1896})}]
\begin{enumerate}[\bf (i)]
\item For any finite group $G$, the following equality holds:
\begin{equation}\label{group_determinant_formula}
\Theta(G)=\prod_{\rho\ \mathrm{irred}}
\det\Bigl(\sum_{g\in G}X_g\rho(g)\Bigr)^{\deg\rho},
\end{equation}
where the product is taken over all irreducible complex representations $\rho$ of the group $G$.

\item For each irreducible representation $\rho$ of the group $G$, the polynomial
$$P_\rho = \det\Bigl(\sum\limits_{g\in G}X_g\rho(g)\Bigr)$$
is irreducible.

\item There is a bijection between the set $\{\rho\}$ of irreducible representations and the set $\{P_\rho = \det(\sum\limits_{g\in G}X_g\rho(g))\}$ of the corresponding polynomials.
\end{enumerate}
\end{theorem}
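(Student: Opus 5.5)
The plan is to prove the three parts in order, using only Maschke's theorem, the Artin--Wedderburn description of $\Cc G$, and the orthogonality of characters.

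\textbf{Part (i).} By Maschke's theorem and the orthogonality relations, $\rho_{\reg}\cong\bigoplus_{\rho}\rho^{\oplus\deg\rho}$. Concretely, there is a constant matrix $T\in\GL_{|G|}(\Cc)$ such that for every $g$ the matrix $T\rho_{\reg}(g)T^{-1}$ is block diagonal, with $\deg\rho$ blocks equal to $\rho(g)$ for each irreducible $\rho$. By linearity in the $X_g$, the same $T$ conjugates $M_{\rho_{\reg}}=\sum_g X_g\rho_{\reg}(g)$ into the block diagonal matrix with blocks $\sum_g X_g\rho(g)$. Since $\det(TMT^{-1})=\det M$ and the determinant of a block diagonal matrix is the product of the block determinants, this gives \eqref{group_determinant_formula}.

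\textbf{Part (ii).} Let $d=\deg\rho$. By Artin--Wedderburn (equivalently Burnside's theorem), the algebra map $\Cc G\to\Mat_d(\Cc)$, $a\mapsto\rho(a)$, is surjective. Hence the $d^2$ entries $x_{ij}=\sum_g X_g\rho(g)_{ij}$ are linearly independent linear forms in the variables $X_g$. Complete them to a basis of linear forms; this gives an invertible linear change of variables under which $P_\rho$ becomes the generic determinant $\det(x_{ij})$, involving only some of the new variables. An invertible linear substitution preserves irreducibility, so it suffices to show that the generic determinant is irreducible.

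I expect this to be the main obstacle, and I would argue directly. Suppose $\det(x_{ij})=fg$ with $f,g$ nonconstant. The determinant has degree exactly $1$ in each $x_{ij}$, so each $x_{ij}$ occurs in exactly one of $f,g$. Suppose $x_{11}$ occurs in $f$ and some $x_{1j}$ with $j\neq 1$ occurs in $g$. Write $f=x_{11}f_1+f_0$ and $g=x_{1j}g_1+g_0$, where $f_1$ and $g_1$ are nonzero polynomials not involving $x_{11}$ or $x_{1j}$. Then $fg$ contains the nonzero term $x_{11}x_{1j}f_1g_1$, which cannot cancel against the other terms. But no monomial of $\det(x_{ij})$ contains two entries of the same row, a contradiction. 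The same argument applies to columns. Propagating along rows and columns, every variable lies in $f$, so $g$ is constant, contradicting the assumption.

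\textbf{Part (iii).} For each irreducible $\rho$ the polynomial $P_\rho$ is homogeneous of degree $\deg\rho$, and its coefficient of $X_e^{\deg\rho}$ equals $1$. For $g\neq e$, the coefficient of $X_e^{d-1}X_g$ is the derivative of $\det(X_eI+tX_g\rho(g))$ in $t$ at $t=0$, which equals $\chi_\rho(g)$. So $\rho\mapsto P_\rho$ lets one recover the character $\chi_\rho$, which determines $\rho$ up to isomorphism. Hence the map is injective on isomorphism classes, and by definition it is surjective onto the set $\{P_\rho\}$, giving the bijection.

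Moreover, by (i), (ii) and unique factorization in $\Cc[\{X_g\}]$, the $P_\rho$ are exactly the distinct normalized irreducible factors of $\Theta(G)$, each appearing with multiplicity $\deg\rho$.
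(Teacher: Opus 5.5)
The paper does not prove this theorem. It quotes it from Frobenius \cite{Frobenius1896}, and only part (i) is used afterwards, together with \eqref{det_rho}, to express $\Theta(G)$ through the maps $\Phi_{\deg\rho}(\chi_\rho)$. So there is no argument in the paper to compare yours with. Judged on its own, your proposal is correct and complete, and it is the standard modern proof:
\begin{itemize}
\item (i) follows from conjugating by a constant matrix that realizes $\rho_{\reg}\cong\bigoplus_\rho\rho^{\oplus\deg\rho}$;
\item (ii) is reduced, via Burnside's theorem and linear independence of the entries, to irreducibility of the generic $d\times d$ determinant, which your row/column argument proves;
\item (iii) follows by reading off $\chi_\rho(g)$ as the coefficient of $X_e^{d-1}X_g$.
\end{itemize}
This runs in the opposite direction to Frobenius's original work, where the factorization of $\Theta(G)$ came first and characters were extracted from it. Your route assumes Maschke's theorem, the orthogonality relations and Burnside's theorem, which is what makes it short. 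Your closing remark, that the $P_\rho$ are exactly the distinct irreducible factors of $\Theta(G)$ with multiplicity $\deg\rho$, is the form in which Frobenius actually stated the result.

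A few points to tighten.
\begin{itemize}
\item In (ii), after completing the $x_{ij}$ to a basis of linear forms, $P_\rho$ lies in $\Cc[x_{ij},y_1,\dots,y_m]$. Say explicitly that any factorization $P_\rho=fg$ has $\deg_{y_k}f=\deg_{y_k}g=0$ by additivity of degrees. Hence it suffices to prove irreducibility in $\Cc[x_{ij}]$.
\item In (iii), the $t$-derivative of $\det(X_eI+tX_g\rho(g))$ at $t=0$ is $X_e^{d-1}X_g\,\chi_\rho(g)$, not $\chi_\rho(g)$ itself. Also state that the coefficient of a monomial in $X_e,X_g$ alone does not change when the other variables are set to zero; this is what justifies the restriction.
\item ``The set $\{\rho\}$'' must be read as isomorphism classes. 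That is exactly what your injectivity argument through characters proves.
\item The normalization that the coefficient of $X_e^{\deg\rho}$ in $P_\rho$ equals $1$ ensures that non-isomorphic $\rho$ give non-associate factors. Your unique-factorization remark needs this.
\end{itemize}
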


From formulas \eqref{det_rho} and \eqref{group_determinant_formula}, we obtain the following result on the relation between the maps $\Phi_n$ and the group determinant.

\begin{prop}
For any finite group $G$, the following formula holds:
$$\Theta(G)
=\prod_{\rho\ \mathrm{irred}}
\left(\frac{1}{(\deg\rho)!}\,\Phi_{\deg\rho}(\chi_\rho)(a,\dots,a)\right)^{\deg\rho},$$
where $a=\sum\limits_{g\in G}X_g g\in\Cc\!G[\{X_g\}]$.
\end{prop}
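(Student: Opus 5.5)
The plan is to combine Frobenius's factorization \eqref{group_determinant_formula} with the determinant formula \eqref{det_rho}, applied separately to each irreducible representation. Fix the formal variables $\{X_g\}_{g\in G}$ and put $a=\sum_{g\in G}X_g g\in\Cc\!G[\{X_g\}]$. For each irreducible complex representation $\rho:G\to\GL_{\deg\rho}(\Cc)$, extend $\rho$ linearly (and $\Cc[\{X_g\}]$-linearly) to the group algebra. Then
$$\rho(a)=\sum_{g\in G}X_g\rho(g),$$
so the factor $P_\rho$ in \eqref{group_determinant_formula} is exactly $\det(\rho(a))$.

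Next I apply \eqref{det_rho} with $n=\deg\rho$. Here the determinant formula is used over the commutative $\Q$-algebra $R=\Cc[\{X_g\}]$, and the trace functional is $\tr\circ\rho$, which equals the character $\chi_\rho$ extended $R$-linearly to $\Cc\!G[\{X_g\}]$. This yields
$$\det(\rho(a))=\frac{1}{(\deg\rho)!}\Phi_{\deg\rho}(\chi_\rho)(a,\dots,a).$$
The point to check is that $\Phi_{\deg\rho}(\tr)(\rho(a),\dots,\rho(a))=\Phi_{\deg\rho}(\chi_\rho)(a,\dots,a)$. This holds because $\Phi_n$ is built from the functional and the products of its arguments, and $\rho$ is an algebra homomorphism. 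Concretely, in the cycle form \eqref{Phi_n_as_sum_over_cycles} each factor $f_\gamma$ is the functional evaluated on a product, and $\tr(\rho(a)^k)=\chi_\rho(a^k)$.

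Substituting each factor into \eqref{group_determinant_formula} and raising it to the power $\deg\rho$ gives the stated product formula.

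The only step needing care is the transfer of \eqref{det_rho} from $\Cc$-valued matrices to matrices with polynomial entries, which is also where the power-$\deg\rho$ bookkeeping comes in. I would handle it by noting that \eqref{fancy_determinant_formula} is a polynomial identity valid over any commutative $\Q$-algebra. Alternatively, both sides are polynomials in the $X_g$ that agree at all complex specializations, so they agree identically.
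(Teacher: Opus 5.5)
Your proposal is correct and follows the same route as the paper: the paper obtains the formula by applying \eqref{det_rho} to each irreducible $\rho$ with $n=\deg\rho$ and substituting the result into Frobenius's factorization \eqref{group_determinant_formula}. Your additional checks only make explicit what the paper leaves implicit. These are that $\rho(a)=\sum_g X_g\rho(g)$, that $\tr(\rho(a)^k)=\chi_\rho(a^k)$, and that the determinant identity holds over the commutative $\Q$-algebra $\Cc[\{X_g\}]$.
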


Frobenius $n$-homomorphisms may be thought of as characters of $n$-dimensional representations of finite groups. As is well known, every $n$-dimensional complex representation of a finite abelian group is a sum of characters of one-dimensional representations. Similarly, every Frobenius $n$-homomorphism is a sum of $n$ ordinary homomorphisms. This observation, in algebraic-geometric and topological contexts, was obtained in the works of Buchstaber and Rees \cite{Buchstaber_Rees02, Buchstaber_Rees04}. A general algebraic formulation is due to Gugnin \cite[Theorem 1]{Gugnin07}.

The identities for the Frobenius characteristic map discussed above arise from certain specializations of Schur functors. An abstract categorical context for Schur functors may be found in \cite{Baez24}.

\subsection{Frobenius Algebras and $n$-Homomorphisms}

Let us briefly recall the main definitions and constructions from the theory of Frobenius algebras in order to fix notation.

\begin{defi}
A {\it Frobenius algebra $A$} (or a {\it Frobenius pair $(A,\theta)$}) is a $\kk$-algebra $A$ with multiplication $\mu$ and unit $e$, equipped with a linear map $\theta: A\to \kk$, which we call the Frobenius functional, such that the bilinear form
\[
\eta=\langle-,-\rangle:=\theta\circ\mu(-,-)
\]
is nondegenerate, i.e. the morphism
$$u\mapsto ( v\mapsto \langle v, u\rangle)$$
is an isomorphism $V\to V^\ast$, where $V$ is the underlying vector space of the algebra $A$.
\end{defi}

It follows from the definition that every Frobenius $\kk$-algebra is finite-dimensional. Indeed, if $A$ were an infinite-dimensional vector space, then by the Erd\H{o}s--Kaplansky theorem
\[
\dim A^\ast=|\kk^A|=|\kk|^{\dim A}
\]
(see, for example, \cite[Chapter IX, Section 5, Theorem 2]{Jacobson53}). By Cantor's theorem,
\[
|\kk|^{\dim A}>\dim A.
\]
Hence $\dim A^\ast>\dim A$, contradicting the isomorphism $A\cong A^\ast$.

It is easy to see that the form $\langle-,-\rangle$ and the multiplication $\mu$ satisfy the following Frobenius condition for all $a,b,c\in A$:
\[
 \langle ab,c\rangle=\langle a, bc\rangle.
\]

Associated with every Frobenius algebra $(A,\theta)$ are the linear form $\theta$, the bilinear form $\eta$, and the trilinear form $\xi$. The latter is defined by
\[
\xi(a\otimes b\otimes c)=\theta(abc).
\]

\begin{example}\label{examples_of_Frobenius_algebras}
The group algebra $\kk\!G$ of a finite group $G$, with functional
\[
\theta\Bigl(\sum\limits_{g\in G}\alpha_g g\Bigr) = \alpha_e,
\]
where $e$ is the identity element of $G$, is a Frobenius algebra. Another example is given by the cohomology algebra of a smooth closed oriented manifold $X$ of dimension $2n$ with the cup product. In this case,
\[
\theta\Bigl(\sum\limits_i a_i\Bigr)=\int\limits_X a_n.
\]
A further example is the Frobenius algebra on the affine space $\kk^n$ with the standard coordinatewise product and Frobenius functional
\[
\theta(a_1,\dots,a_n)=\sum\limits_{i=1}^n\lambda_i a_i
\]
for a fixed collection of nonzero elements $\lambda_i\in\kk$.
\end{example}

Every finite-dimensional Hopf algebra admits a Frobenius algebra structure \cite[Chapter V]{Sweedler69}.

Let us record the following statement, well known in the theory of two-dimensional TQFT, for future reference. In the proof, unlike the ``picture method'' from \cite[Proposition 2.3.22]{Kock03}, we will follow the standard algebraic approach, since explicit formulas will be important for us. The proof follows the unpublished lectures of Rolf Farnsteiner \cite{Farnsteiner}.

\begin{prop}
\label{coalgebra_strucure_in_Frobenius_algebra} 
\begin{enumerate}[\bf (i)]
\item Every Frobenius $\kk$-algebra $(A,\theta)$ of dimension $n$ carries a unique coalgebra structure $(A,\Delta,\theta)$ with counit $\theta$. Moreover, if
\[
\Delta(1)=\sum\limits_{i=1}^m a_i\otimes b_i
\]
for some $m$, then
\begin{equation}\label{comult}\Delta(x)=\sum\limits_{i=1}^m a_ix\otimes b_i=\sum\limits_{i=1}^m a_i\otimes xb_i.\end{equation}

\item Let $(x_1,\dots,x_n)$ and $(y_1,\dots,y_n)$ be two dual bases, i.e.
\[
\langle x_i, y_j \rangle = \delta_{ij}.
\]
Then
\[
\Delta(x)=\sum\limits_{i=1}^n x_ix\otimes y_i=\sum\limits_{i=1}^n x_i\otimes xy_i.
\]
\end{enumerate}
\end{prop}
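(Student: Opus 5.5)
The plan is to use nondegeneracy of $\eta$ to build dual bases first, define $\Delta$ from them, and then prove uniqueness; the result is (ii) first, with (i) as a consequence. Since $A$ is finite-dimensional and $\eta(u,v)=\theta(uv)$ is nondegenerate, for any basis $(x_1,\dots,x_n)$ there is a unique dual basis $(y_1,\dots,y_n)$ with $\langle x_i,y_j\rangle=\delta_{ij}$. The key identity, valid for all $a\in A$, is
\[
a=\sum_i \langle a,y_i\rangle x_i=\sum_i \langle x_i,a\rangle y_i .
\]
I will then define $\Delta(x):=\sum_i x_ix\otimes y_i$ and show that it does not depend on the choice of dual bases. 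If the basis is changed by $x'=Px$, the dual basis changes by the inverse transpose, so $\sum_i x_i\otimes y_i$ is invariant. This element is the image of $\id_A$ under $A\otimes A\cong A\otimes A^\ast\cong\End(A)$, where the isomorphism uses $\eta$.

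\textbf{Key identity.} The main computation is
\[
\sum_i x_ix\otimes y_i=\sum_i x_i\otimes xy_i .
\]
Write $x_ix=\sum_j\langle x_ix,y_j\rangle x_j$ and $xy_j=\sum_i\langle x_i,xy_j\rangle y_i$. By the Frobenius condition $\langle x_ix,y_j\rangle=\langle x_i,xy_j\rangle$, so both sides equal $\sum_{i,j}\langle x_i,xy_j\rangle\, x_j\otimes y_i$ after relabelling. This gives (ii), and taking $x=1$ shows that $\Delta(x)$ is obtained from $\Delta(1)=\sum a_i\otimes b_i$ by multiplying by $x$ on the left factor or the right factor. That is formula \eqref{comult}. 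For an arbitrary presentation $\Delta(1)=\sum_{i=1}^m a_i\otimes b_i$ the same holds, since the maps $t\mapsto t\cdot(x\otimes1)$ and $t\mapsto t\cdot(1\otimes x)$ are linear on $A\otimes A$ and agree on $\Delta(1)$.

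\textbf{Coalgebra axioms.} For the counit,
\[
(\theta\otimes\id)\Delta(x)=\sum_i\theta(x_ix)y_i=\sum_i\langle x_i,x\rangle y_i=x,
\]
and similarly
\[
(\id\otimes\theta)\Delta(x)=\sum_i x_i\,\theta(xy_i)=\sum_i\langle x,y_i\rangle x_i=x .
\]
Coassociativity follows from the bimodule property: both $(\Delta\otimes\id)\Delta(x)$ and $(\id\otimes\Delta)\Delta(x)$ reduce to $\sum_{i,j}x_i\otimes x_j x\, y_i\otimes y_j$ up to reordering, after moving $x$ through using the identity just proved.

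\textbf{Uniqueness.} I expect this to be the main obstacle, because it depends on what is built into "the" coalgebra structure. I would show that a comultiplication with counit $\theta$ that is an $A$-bimodule map is forced. The counit axiom $(\theta\otimes\id)\Delta'(1)=1$, combined with the bimodule property and the nondegeneracy of $\eta$, pins down $\Delta'(1)$ as the canonical element $\sum x_i\otimes y_i$. Concretely, I would pair $\Delta'(1)$ against $\theta(\,\cdot\,u)\otimes\theta(\,\cdot\,v)$ for all $u,v$. The pairing of the canonical element against these functionals is $\theta(uv)$, since $\sum_i\langle u,x_i\rangle\langle y_i,v\rangle=\langle u,v\rangle$. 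Nondegeneracy then identifies $\Delta'(1)$ with $\sum x_i\otimes y_i$, and the bimodule property extends this to $\Delta'=\Delta$.
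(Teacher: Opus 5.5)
Your argument is correct, but it runs in the opposite order from the paper's.

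\textbf{The paper's route.} The paper defines $\Delta$ basis-free as $(\phi^{-1}\otimes\phi^{-1})\circ\mu^{\ast}\circ\phi$ with $\phi(a)=\langle a,-\rangle$. Coassociativity and the counit $\theta$ are then inherited from the dual coalgebra $A^{\ast}$ without computation. It derives \eqref{comult} from the characterization $\langle x,yz\rangle=\sum\langle\alpha_i,y\rangle\langle\beta_i,z\rangle$ using the Nakayama automorphism. Only afterwards does it obtain (ii), by showing that a presentation of $\Delta(1)$ of length $m\le n$ must consist of dual bases, with basis-independence coming from the isomorphism $\psi$.

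\textbf{Your route.} You start from dual bases and prove $\sum_i x_ix\otimes y_i=\sum_i x_i\otimes xy_i$ by reading the single scalar $\theta(x_ixy_j)$ in two ways, so no Nakayama automorphism is needed. You then check the counit and coassociativity by hand. Finally you get \eqref{comult} for an arbitrary presentation of $\Delta(1)$ from linearity of $a\otimes b\mapsto ax\otimes b$ and $a\otimes b\mapsto a\otimes xb$. This is more elementary, yields exactly the explicit formulas used later in \eqref{diagonal_map}, and needs no restriction $m\le n$.

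\textbf{Trade-offs and uniqueness.} The paper's route gets the coalgebra axioms for free and makes $\Delta$ canonical as the transpose of $\mu$. Beyond that choice it does not actually prove uniqueness. Read literally, uniqueness among all coalgebra structures with counit $\theta$ is false: conjugating $\Delta$ by any linear automorphism $g$ with $\theta\circ g=\theta$ gives another such structure, in general different from $\Delta$. So your added bimodule hypothesis is the right way to make that clause precise, and your pairing argument proves it.

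\textbf{Bookkeeping if $A$ is noncommutative.} Three order conventions need fixing; none affects the commutative algebras used in the paper.
\begin{itemize}
\item The bimodule structure in your uniqueness step must be the one for which your $\Delta$ is a bimodule map: $A$ acts on the right of the first tensor factor and on the left of the second. With the usual convention (left on the first factor, right on the second), the analogous argument forces $\Delta'(1)=\sum_i y_i\otimes x_i$. That is the co-opposite comultiplication, which differs from $\sum_i x_i\otimes y_i$ whenever $\eta$ is not symmetric.
\item For the same reason, the second map in your derivation of \eqref{comult} should be $t\mapsto(1\otimes x)\,t$, not $t\cdot(1\otimes x)$.
\item The pairing of the canonical element with $\theta(\cdot\,u)\otimes\theta(\cdot\,v)$ is $\sum_i\langle x_i,u\rangle\langle y_i,v\rangle$, not $\sum_i\langle u,x_i\rangle\langle y_i,v\rangle$.
\end{itemize}
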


\begin{proof}
Let us prove part {\bf (i)}. Let $A$ be a $\kk$-algebra. We will show that $A$ is a Frobenius algebra if and only if there exists a coalgebra structure $(A,\Delta,\theta)$ with comultiplication \eqref{comult}.

Suppose first that $(A,\Delta,\theta)$ is a coalgebra with comultiplication \eqref{comult}. Introduce the bilinear form
\[
\langle x,y\rangle=\theta(xy)
\qquad\text{for all }x,y\in A.
\]
Let us check that this form is nondegenerate. Suppose that $x\in A$ is such that
\[
\langle a,x\rangle=0
\qquad\text{for all }a\in A.
\]
Then (here $\overline{\otimes}$ denotes the composition of the tensor product with the subsequent multiplication in the field $\kk$)
\[
x = (\theta\overline{\otimes}\id_{A})\circ\Delta(x)=\sum\limits_{i=1}^m\theta(a_ix)\, b_i=\sum\limits_{i=1}^m\langle a_i, x \rangle b_i=0.
\]
Hence the form $\langle-,-\rangle$ is nondegenerate, and therefore $(A,\theta)$ is a Frobenius algebra.

Now let $(A,\theta)$ be a Frobenius algebra. On the dual space $A^\ast$ there is a natural coalgebra structure
\[
\mu^\ast:A^\ast\to A^\ast\otimes_{\kk} A^\ast
\]
with counit
\[
e^\ast:A^\ast\to \kk,
\]
defined by the rules
\[
\begin{aligned}
\mu^\ast(f)=\sum\limits_{i=1}^m f_i\otimes g_i
&\Leftrightarrow
f(ab)=\sum\limits_{i=1}^m f_i(a)g_i(b)\quad \forall a,b\in A,\\
e^{\ast}(f)&=f(e)\quad \forall f\in A^\ast.
\end{aligned}
\]
Let
\[
\phi:A\to A^\ast,\qquad a\mapsto \langle a,-\rangle
\]
be the isomorphism induced by the Frobenius form. Define the required comultiplication by
\[
\Delta := (\phi^{-1}\otimes \phi^{-1})\circ \mu^{\ast}\circ \phi.
\]
Equivalently, the comultiplication map $\Delta$ is determined by the condition
\begin{equation}\label{diagonal_definition}
\Delta(x)=\sum\limits_{i=1}^m\alpha_i\otimes \beta_i
\;\Longleftrightarrow\;
\langle x, yz \rangle = \sum\limits_{i=1}^m \langle \alpha_i, y \rangle\langle \beta_i, z \rangle
\quad \forall y,z\in A.
\end{equation}

Let us prove formulas \eqref{comult}. Suppose
\begin{equation}\label{Delta_of_1}\Delta(1)=\sum\limits_{i=1}^m a_i\otimes b_i
\end{equation}
for some $a_i,b_i\in A$, $i=1,\dots,m$. Let $\nu$ be the Nakayama automorphism of the Frobenius algebra $A$, i.e. the automorphism
\[
\nu: A\to A
\]
such that
\[
\langle y, x \rangle = \langle \nu(x), y \rangle
\qquad\text{for all }x,y\in A.
\]
Then
\begin{equation}\label{diagonal_formula_proof}
\begin{aligned}
\sum_{i=1}^{m} \langle a_i,y\rangle\langle x b_i, z\rangle
=
\sum_{i=1}^{m} &\langle a_i,y\rangle\langle x, b_i z\rangle
=
\sum_{i=1}^{m} \langle a_i,y\rangle\langle b_i z, \nu^{-1}(x)\rangle
=
\sum_{i=1}^{m} \langle a_i,y\rangle\langle b_i, z \nu^{-1}(x)\rangle\\
&=
\langle 1, y z \nu^{-1}(x)\rangle
=
\langle y z, \nu^{-1}(x)\rangle
=
\langle x, y z\rangle
\qquad \forall x,y,z\in A.
\end{aligned}
\end{equation}
From \eqref{diagonal_formula_proof} and \eqref{diagonal_definition}, we obtain
\[
\Delta(x)=\sum\limits_{i=1}^m a_i\otimes xb_i.
\]
Similarly, one shows that
\[
\Delta(x)=\sum\limits_{i=1}^m a_ix\otimes b_i.
\]

Now let us prove part {\bf (ii)}. Suppose
\[
\Delta(1)=\sum\limits_{i=1}^m a_i\otimes b_i,
\qquad m\leq n.
\]
We have
\begin{equation}\label{x_decomposition}
x=(\id_A\overline{\otimes} \theta)\circ\Delta(x)=\sum\limits_{i=1}^m\langle x,b_i \rangle a_i\quad \forall x\in A.
\end{equation}
Hence the set $\{a_1,\dots,a_m\}$ spans the vector space $A$, so it is a basis of the algebra $A$. It follows from \eqref{x_decomposition} that $(a_1,\dots,a_n)$ and $(b_1,\dots,b_n)$ are dual bases. The value of the expression
\[
\Delta(1)=\sum\limits_{i=1}^n a_i\otimes b_i
\]
does not depend on the choice of the pair of dual bases $(a_1,\dots,a_n)$ and $(b_1,\dots,b_n)$, since $\Delta(1)$ is the preimage of the element $\id_A$ under the isomorphism
\[
\begin{aligned}
\psi \colon A \otimes_{\Bbbk} A &\longrightarrow \operatorname{Hom}_{\Bbbk}(A,A),\\
a \otimes b &\longmapsto \bigl( x \mapsto \langle a, x\rangle\, b \bigr).
\end{aligned}
\]
\end{proof}

Every commutative Frobenius algebra encodes a two-dimensional topological quantum field theory, yielding an equivalence between the category of two-dimensional topological quantum field theories and the category of commutative Frobenius algebras. Moreover, this equivalence makes it possible to obtain transparent topological proofs of certain identities in Frobenius algebras \cite{Kock03}.

Let $(A,\theta)$ be a commutative Frobenius algebra with basis $\{e_i\mid i\in I\}$. As was shown in \cite[page 138]{Buchstaber_Rees04}, the values $\theta(e_i)$, $\theta(e_ie_j)$, and $\theta(e_ie_je_k)$ can be expressed in terms of $\Phi_s(e_\ell):=\Phi_s(\theta)(e_\ell)$, $s=1,2,3$; namely, for all $i,j,k\in I$:
\[
\begin{aligned}
\theta(e_i)&=\Phi_1(e_i),\\
\theta(e_ie_j)&=\Phi_1(e_i)\Phi_1(e_j)-\Phi_2(e_i,e_j),\\ 
2\theta(e_ie_je_k)
&=
\Phi_3(e_i,e_j,e_k)
+2\Phi_1(e_i)\Phi_1(e_j)\Phi_1(e_k)
-\Phi_1(e_i)\,\Phi_2(e_j,e_k)\\
&-\Phi_1(e_j)\,\Phi_2(e_i,e_k)
-\Phi_1(e_k)\,\Phi_2(e_i,e_j).
\end{aligned}
\]

Recall that the structure constants $a_{ij}^k$ of the algebra $A$ are defined by the equalities
\[
e_ie_j = \sum\limits_{k}a_{ij}^ke_k
\qquad\text{for all }i,j\in I.
\]
The result \cite[Proposition 19]{Buchstaber_Rees04} states that the structure constants $a_{ij}^k$ of every commutative Frobenius algebra $A$ are recovered from the values of the $k$-characters (in the terminology of Frobenius, see Example \ref{higher_n-characters}) $\Phi_k(\theta)$ on the basis elements $\{e_i\mid i\in I\}$.

\subsection{Dubrovin--Frobenius Structures and 2-Valued Groups}\label{Dubrovin_Frobenius_structures_and_2-valued_groups}

In the case where a smooth manifold carries, in each tangent space, a Frobenius algebra structure depending smoothly on the point, one speaks of a Dubrovin--Frobenius structure on this manifold. This notion was first introduced by Dubrovin \cite{Dubrovin96}. Let us recall the main definitions and constructions from the theory of Dubrovin--Frobenius structures on manifolds, following \cite{Natanzon25} (see also \cite{Dubrovin96}).

\begin{defi}
Let $M$ be a smooth real or complex manifold. We say that a {\it Dubrovin--Frobenius structure} is given on $M$ if for each point $p\in M$ the tangent space $M_p=T_pM$ is endowed with the structure of a semisimple Frobenius algebra $(M_p,\theta_p)$ (i.e. this algebra is isomorphic to an algebra structure on the affine space $\kk^n$, see Example \ref{examples_of_Frobenius_algebras}) and the following conditions hold:
\begin{enumerate}[\bf (i)]
\item The tensors $\theta=\{\theta_p\mid p\in M\}$, $\eta(a,b)=\theta(ab)$, and $\xi(a,b,c)=\theta(abc)$ are smooth and $d\theta = 0$.
\item The pseudometric $\eta$ is flat (i.e. its Gram matrix is locally constant in suitable systems of local coordinates); the unit field $e$ is parallel (i.e. $\nabla e=0$, where $\nabla$ is the Levi--Civita connection for $\eta$).
\item There exists a covering $M=\bigcup\limits_{\alpha}U_{\alpha}$ by local charts
\[
(x_{\alpha}^1,\dots,x_{\alpha}^n):U_{\alpha}\to \kk^n,
\]
such that the collection
\[
\left( \frac{\partial}{\partial x_{\alpha}^1},\dots,\frac{\partial}{\partial x_{\alpha}^n} \right)
\]
is the canonical basis of the tangent space $M_p$ at every point $p\in U_\alpha$ (i.e.
\[
\frac{\partial}{\partial x_\alpha^i}\cdot\frac{\partial}{\partial x_\alpha^j} = \delta_{ij}.
\]
\item The Euler field
\[
E=\sum\limits_{i=1}^n x^i_\alpha\frac{\partial}{\partial x_\alpha^i}
\]
does not depend on $\alpha$, and moreover $\nabla\nabla E=0$ and
\[
L_E\theta =(m+1)\theta,
\]
where $L_E$ is the Lie derivative along the field $E$.
\end{enumerate}
\end{defi}

Every Dubrovin--Frobenius structure on a manifold $M$ admits, near every point $x\in M$, a local coordinate system
\[
(t^1,\dots,t^n):U\to\kk^n
\]
such that for some function $F(t^1,\dots,t^n)$, called the {\it potential}, one has
\[
\theta\left(\frac{\partial}{\partial t^i}\cdot\frac{\partial}{\partial t^j}\cdot\frac{\partial}{\partial t^k}\right) = \frac{\partial^3 F}{\partial t^i\partial t^j\partial t^k}.
\]
Such a coordinate system is called {\it flat quasi-homogeneous}. In these coordinates, the pseudometric, the unit vector field, and the Euler vector field take the form
\[
\begin{aligned}
\eta &= \sum\limits_{\alpha,\beta}\delta_{\alpha+\beta, n+1}\,dt^{\alpha}\otimes dt^{\beta},\quad
e = \frac{\partial}{\partial t^{\epsilon}},\\
E&=\sum\limits_{\alpha=1}^n(d_{\alpha}t^{\alpha}+r_{\alpha})\frac{\partial}{\partial t^{\alpha}},
\end{aligned}
\]
where $1\leq\epsilon\leq n$ is some natural number (in Dubrovin's notation one chooses $\epsilon = 1$), $d_{\epsilon} = 1$, $d_\alpha$ and $r_\alpha$ are constants from the field $\kk$, and $d_\alpha r_\alpha = 0$. The numbers $d_1,\dots,d_n$ are called the {\it degrees of the manifold $M$}.

For a given Euler vector field
\[
E=\sum\limits_{\alpha}(d_{\alpha}t^{\alpha}+r_\alpha)\frac{\partial}{\partial t^{\alpha}}
\]
in flat quasi-homogeneous coordinates, under the condition
\[
d_\alpha+d_{n+1-\alpha}=m+2,
\]
and for a potential $F(t^1,\dots,t^n)$, one says that the pair $(F, E)$ is a {\it solution of the WDVV equations} (these equations first appeared in the field-theoretic works of Witten \cite{Witten90} and Dijkgraaf, Verlinde, Verlinde \cite{DVV91}) if the following conditions hold:
\begin{enumerate}[\bf (i)]
\item The associativity equation
\[
\sum_{\nu=1}^n
\frac{\partial^3 F}{\partial t^\alpha\,\partial t^\beta\,\partial t^\nu}\,
\frac{\partial^3 F}{\partial t^\gamma\,\partial t^\delta\,\partial t^{n+1-\nu}}
=
\sum_{\nu=1}^n
\frac{\partial^3 F}{\partial t^\gamma\,\partial t^\beta\,\partial t^\nu}\,
\frac{\partial^3 F}{\partial t^\alpha\,\partial t^\delta\,\partial t^{n+1-\nu}},
\qquad
\forall\,\alpha,\beta,\gamma,\delta.
\]

\item The normalization condition
\[
\frac{\partial^3 F}{\partial t^\epsilon\,\partial t^\alpha\,\partial t^\beta}
= \eta\left(\frac{\partial}{\partial t^{\alpha}},\frac{\partial}{\partial t^{\beta}}\right)=
\delta_{\alpha+\beta,n+1},
\qquad
\forall\,\alpha,\beta.
\]

\item The homogeneity conditions
\[
L_E F=(m+3)F+\sum_{\alpha,\beta} A_{\alpha\beta}\,t^\alpha t^\beta
+\sum_{\alpha} B_\alpha\,t^\alpha + C,
\]
where $A_{\alpha\beta}$, $B_\alpha$, and $C$ are constants.
\end{enumerate}

If $M$ is a manifold with a Dubrovin--Frobenius structure and flat quasi-homogeneous coordinates with potential $F$ and Euler vector field $E$ are chosen on a domain $U\subset M$, then the multiplication in the tangent space $M_p$ takes the form
\begin{equation}\label{product_of_t}
\frac{\partial}{\partial t^{\alpha}}\cdot\frac{\partial}{\partial t^{\beta}} = \sum\limits_{\gamma}\frac{\partial^3 F}{\partial t^{\alpha}\partial t^{\beta}\partial t^{n+1-\gamma}}\cdot\frac{\partial}{\partial t^{\gamma}},
\end{equation}
and the pair $(F, E)$ is a solution of the WDVV equations.

The converse statement is also true. Let $(F, E)$ be a solution of the WDVV equations on a domain $U\subset\kk^n$. Suppose that in each tangent space $T_pU$ the operation \eqref{product_of_t} is given and
\[
g\left(\frac{\partial}{\partial t^{\alpha}},\frac{\partial}{\partial t^{\beta}}\right)=\delta_{\alpha+\beta,n+1}.
\]
Then this operation endows $T_pU$ with the structure of a Frobenius algebra.

If $\theta(e)=0$ (as often happens), then one may, without loss of generality, take $\epsilon = 1$, $e=\frac{\partial}{\partial t^1}$. In this case, the normalization condition
\[
\frac{\partial^3F}{\partial t^1\partial t^\alpha\partial t^\beta} = \delta_{\alpha+\beta, n+1}
\]
implies that the potential has the form
\[
F(t^1,\dots,t^n) = \frac{1}{2}(t^1)^2t^n+\frac{1}{2}t^1\sum\limits_{\alpha=2}^{n-1}t^{\alpha}t^{n+1-\alpha} + f(t^2,\dots,t^n).
\]

Let us now restrict to the case of a domain $U$ in $\R^3$ with fixed basis
\[
e_k=\frac{\partial}{\partial t^k},\qquad k=1,2,3,
\]
unit field $e=e_1$, and potential
\begin{equation}\label{polynomial_potetnial_of_order_3}
F(t) = \frac{1}{2}(t^1)^2t^3+\frac{1}{2}t^1(t^2)^2 + f(t^2,t^3).
\end{equation}
The metric in these flat quasi-homogeneous coordinates has the form
\begin{equation}\label{metric_matrix}
\eta = \left (\begin{matrix}0 & 0& 1\\ 0& 1& 0\\ 1& 0& 0 \end{matrix} \right ).
\end{equation}

The multiplication table for the basis elements is
\begin{equation}\label{mult_table_2}
e_2^2=a e_1+b e_2+e_3,\qquad
e_2e_3=c e_1+a e_2,\qquad
e_3^2=d e_1+c e_2,
\end{equation}
where
\[
a:=f_{xxy},\qquad b:=f_{xxx},\qquad c:=f_{xyy},\qquad d:=f_{yyy}.
\]
Recall that $e_1$ is the identity field.

From the form \eqref{metric_matrix} of the metric
\[
\eta_{\alpha\beta}=c_{1\alpha\beta}=\frac{\partial^3 F}{\partial t^1\,\partial t^\alpha\,\partial t^\beta}
\]
it follows that $\theta(e_\alpha)=\eta_{\alpha 1}$ and
\[
\theta(e_1) = \theta(e_2) = 0,\qquad \theta(e_3)=1.
\]

For each point $p\in U$, the Frobenius algebra $T_pU$ is isomorphic to the algebra (where $\mu$ is a formal variable)
\[
\R[\mu]/(\mu^3-b\mu^2-2a \mu-c)
\]
under the isomorphism
\[
\varphi(e_1)=1,\qquad \varphi(e_2)=\mu,\qquad \varphi(e_3)=\mu^2-b\mu-a.
\]

The associativity condition
\[
(e_2^2)e_3 = e_2(e_2e_3)
\]
takes the form (let $x:=t^2$ and $y:=t^3$)
\begin{equation}\label{assoc_potential}
a^2-d-bc=f^2_{xxy}-f_{yyy}-f_{xxx}f_{xyy}=0.
\end{equation}

In the case when
\[
f(t^2,t^3) =- \frac{(t^2)^4}{96}\phi(t^3),
\]
the associativity condition \eqref{assoc_potential} becomes the Chazy III equation:
\[
\phi'''=\phi\phi''-\frac{3}{2}(\phi')^2.
\]

Moreover, every (not necessarily associative) algebra with basis $e_1$, $e_2$, and $e_3$, with unit $e_1$, given by the multiplication table \eqref{mult_table_2}, is associative if and only if the following condition holds:
\[
a^2-d-bc=0.
\]

Equivalently, consider the polynomial ring
\[
K=\Z[a,b,c,d]
\]
and the commutative nonassociative algebra
\[
K\langle e_1, e_2, e_3\rangle/M,
\]
where $K\langle e_1, e_2, e_3\rangle$ is the free nonassociative algebra on generators $e_1$, $e_2$, and $e_3$, and $M$ is the ideal generated by the multiplication table \eqref{mult_table_2}. Then the following statement holds.

\begin{prop}\label{minimal_ideal}
Let $I$ be the minimal ideal in $K$ with respect to inclusion such that the algebra $L\langle e_1, e_2, e_3\rangle/M$ is associative, where $L=K/I$. Then
\[
I=(a^2-d-bc).
\]
\end{prop}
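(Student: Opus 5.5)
The idea is to compute the associators of the algebra defined by the multiplication table \eqref{mult_table_2} over the polynomial ring $K$, with $e_1$ acting as identity. Associativity of the quotient $L\langle e_1,e_2,e_3\rangle/M$ means that the $K$-trilinear associator $(u,v,w)=(uv)w-u(vw)$ vanishes on all triples of basis elements. Since the algebra is commutative and $e_1$ is a unit, any triple containing $e_1$ gives zero automatically. By commutativity, $(u,v,w)=-(w,v,u)$ and $(u,v,u)=0$. So it suffices to compute $(e_2,e_2,e_3)$, $(e_2,e_3,e_3)$ and $(e_2,e_3,e_2)$; the last one is zero.

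First I compute $(e_2e_2)e_3$ and $e_2(e_2e_3)$:
\[
(e_2^2)e_3=a e_3+b(ce_1+ae_2)+de_1+ce_2=(bc+d)e_1+(ab+c)e_2+ae_3,
\]
\[
e_2(e_2e_3)=ce_2+a(ae_1+be_2+e_3)=a^2e_1+(ab+c)e_2+ae_3 .
\]
Their difference is $(bc+d-a^2)e_1$. Next I compute $(e_2e_3)e_3$ and $e_2(e_3^2)$:
\[
(e_2e_3)e_3=ce_3+a(de_1+ce_2),
\]
\[
e_2(e_3e_3)=de_2+c(ae_1+be_2+e_3).
\]
Their difference is $(ad-ac)e_1+(ac-d-bc)e_2$. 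I will recheck these computations carefully.

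\emph{This is the main obstacle.} The second difference does not obviously lie in the ideal $(a^2-d-bc)$. The $e_2$-coefficient $ac-d-bc$ has the wrong form, so I suspect an arithmetic slip. Recomputing: $(e_2e_3)e_3=(ce_1+ae_2)e_3=ce_3+a(de_1+ce_2)=ade_1+ace_2+ce_3$. Then $e_2e_3^2=e_2(de_1+ce_2)=de_2+c(ae_1+be_2+e_3)=ace_1+(d+bc)e_2+ce_3$. The difference is $a(d-c)e_1$? No: it is $(ad-ac)e_1+(ac-d-bc)e_2$. Since $e_1$ has coefficient $ad$ versus $ac$, I will recheck $e_3^2=de_1+ce_2$ from \eqref{mult_table_2}. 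With that table the $e_1$-term is $ad-ac$, which looks asymmetric.

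I therefore plan to recheck whether the correct associator equals $a(a^2-d-bc)e_1-(a^2-d-bc)e_2$ up to sign, i.e. to re-derive the table from the potential. The paper asserts that the Frobenius structure with potential \eqref{polynomial_potetnial_of_order_3} yields exactly \eqref{mult_table_2}, so any discrepancy must come from my arithmetic. The expected outcome is that every associator coefficient is a $K$-multiple of $a^2-d-bc$. This gives $I\subseteq(a^2-d-bc)$, because modding out by $a^2-d-bc$ makes all basis associators vanish, and hence by trilinearity all associators.

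For the reverse inclusion I use the first associator. Its $e_1$-coefficient is exactly $bc+d-a^2$. In $L\langle e_1,e_2,e_3\rangle/M$ the elements $e_1,e_2,e_3$ form a free $L$-basis, since the table gives a free module structure. So associativity forces $a^2-d-bc=0$ in $L$, that is $a^2-d-bc\in I$. By minimality, $I=(a^2-d-bc)$.
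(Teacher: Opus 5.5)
Your strategy is the right one, and it is the same as the paper's. The paper displays only the check $(e_2^2)e_3=e_2(e_2e_3)$ and asserts the rest. Since $e_1$ is the unit and the algebra is commutative, only $(e_2,e_2,e_3)$ and $(e_2,e_3,e_3)$ need computing. Several parts of your argument are correct: the first associator $(bc+d-a^2)e_1$, the freeness of $e_1,e_2,e_3$ over $L$, and the resulting inclusion $(a^2-d-bc)\subseteq I$.

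The gap is the other inclusion, $I\subseteq(a^2-d-bc)$. It requires \emph{every} associator to vanish modulo $a^2-d-bc$, and you never establish this. For $(e_2,e_3,e_3)$ you obtain a value that is not in the ideal, then guess a different value, and then declare the ``expected outcome''. Appealing to what the paper asserts is not a proof. Re-deriving the table from the potential is beside the point, since the proposition is purely about the abstract table, and the table does follow from the potential anyway.

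The trouble comes from an arithmetic slip that you repeat in both attempts: in $(ce_1+ae_2)e_3$ you replaced the product $e_2e_3$ by $e_3^2=de_1+ce_2$. The correct computation is
\[
(e_2e_3)e_3=c\,e_3+a(e_2e_3)=ac\,e_1+a^2e_2+c\,e_3,\qquad e_2(e_3^2)=ac\,e_1+(d+bc)e_2+c\,e_3.
\]
Hence $(e_2,e_3,e_3)=(a^2-d-bc)e_2$, with zero $e_1$-coefficient, which is not the form $a(a^2-d-bc)e_1-(a^2-d-bc)e_2$ you conjectured. So the only nonzero associators of basis triples are $\pm(a^2-d-bc)e_1$ and $\pm(a^2-d-bc)e_2$. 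The minimal ideal $I$ is the ideal generated by all associator coefficients, which also shows it exists, and this ideal is $(a^2-d-bc)$. With that correction your argument is complete.
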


From Proposition \ref{minimal_ideal} and the results of Section \ref{Curves_in_moduli_space}, we obtain the following statement.

\begin{theorem}\label{Dubrovin-Frobenius_and_2-valued}
The associativity condition
\[
4k_8=k_4^2-k_6k_2
\]
in the universal symmetric $2$-algebraic two-valued group is equivalent to the associativity of the Dubrovin--Frobenius structure on a three-dimensional domain with potential
\begin{equation}\label{the_potential}
F(t) = \frac{1}{2}(t^1)^2t^3+\frac{1}{2}t^1(t^2)^2 - \frac{(t^2)^4}{96}\phi(t^3).
\end{equation}
\end{theorem}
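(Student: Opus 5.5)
The plan is to chain together three equivalences already available in the paper. The associativity condition of the Dubrovin--Frobenius structure is condition \eqref{assoc_potential}, $a^2-d-bc=0$. Proposition \ref{minimal_ideal} shows that this is exactly the associativity of the algebra with multiplication table \eqref{mult_table_2}. The condition $4k_8=k_4^2-k_6k_2$ was translated, in Theorem \ref{phase_flow}, into the Chazy equation for $y=-\lambda k_2/4$ along the flow \eqref{dyn_sys}. So both sides only need to be reduced to the Chazy III equation for one and the same function of one variable.

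First, I compute the third derivatives of $f(t^2,t^3)=-\frac{(t^2)^4}{96}\phi(t^3)$, writing $x=t^2$ and $y=t^3$:
\[
b=f_{xxx}=-\tfrac{x}{4}\phi,\qquad a=f_{xxy}=-\tfrac{x^2}{8}\phi',\qquad c=f_{xyy}=-\tfrac{x^3}{24}\phi'',\qquad d=f_{yyy}=-\tfrac{x^4}{96}\phi'''.
\]
Substituting these into $a^2-d-bc$ gives
\[
\frac{x^4}{96}\Bigl(\phi'''-\phi\phi''+\tfrac32(\phi')^2\Bigr).
\]
On the open domain, where $x\neq0$ generically, the associativity of the Frobenius structure is therefore equivalent to the Chazy III equation for $\phi$. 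Proposition \ref{minimal_ideal} guarantees that this single relation is the whole associativity condition; no further relations come from other triples of basis elements.

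Second, I identify $\phi$ with the solution $y=-\lambda k_2/4$ from Theorem \ref{phase_flow}, with $\tau$ playing the role of $t^3$. That theorem already establishes the link to the two-valued group. Express $k_4$ and $k_6$ through the derivatives of $k_2$ using \eqref{dyn_sys}; then $k_8$ computed from the flow, as in \eqref{k_8_one_side}, agrees with $\frac14(k_4^2-k_2k_6)$, as in \eqref{k_8_the_other_side}, exactly when $y$ solves Chazy. Hence $4k_8=k_4^2-k_6k_2$ holds if and only if $\phi$ satisfies Chazy, which holds if and only if the Frobenius structure with potential \eqref{the_potential} is associative.

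The main obstacle is the normalization between the two sides. The derivative in Theorem \ref{phase_flow} is $1/(2\pi i)\,\dd/\dd\tau$, while $\partial/\partial t^3$ carries no such factor. I would absorb this by a linear rescaling of $t^3$ (or of $\phi$), using that Chazy is invariant under $\phi(s)\mapsto\kappa\phi(\kappa s)$. I would also make explicit that the equivalence is understood along the one-parameter families $\G_{\Cc}(B_\tau)$ of Theorem \ref{phase_flow}, and not for arbitrary constant parameters.
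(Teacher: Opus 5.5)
Your proposal is correct and is essentially the paper's own argument. The paper simply combines Proposition~\ref{minimal_ideal} (associativity $\Leftrightarrow a^2-d-bc=0$, which for this potential equals $\frac{(t^2)^4}{96}\bigl(\phi'''-\phi\phi''+\frac32(\phi')^2\bigr)$) with Theorem~\ref{phase_flow} (along the flow \eqref{dyn_sys}, $4k_8=k_4^2-k_6k_2$ $\Leftrightarrow$ Chazy III for $y=-\lambda k_2/4$); your explicit third-derivative computation and the rescaling $\phi(s)\mapsto\kappa\phi(\kappa s)$, which reconciles $\frac{1}{2\pi i}\frac{\dd}{\dd\tau}$ with $\partial/\partial t^3$, make explicit details the paper leaves implicit.
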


Let us write explicitly how the coalgebra structure looks for the Dubrovin--Frobenius structure with potential \eqref{the_potential}. Recall that if $A$ is a Frobenius algebra with dual bases $\{x_i\}$ and $\{y_i\}$ (i.e.
\[
\langle x_i, y_j\rangle = \delta_{ij}),
\]
then the counit of the corresponding coalgebra coincides with the Frobenius functional $\epsilon$, and the diagonal (comultiplication) is given by the formula (see Proposition \ref{coalgebra_strucure_in_Frobenius_algebra})
\[
\Delta(x)= \sum\limits_{i=1}^nx_ix\otimes y_i=\sum\limits_{i=1}^nx_i\otimes xy_i.
\]
From the form of the metric \eqref{metric_matrix}, it follows that the basis $(e_1,e_2,e_3)$ is dual to the basis $(e_3,e_2,e_1)$. Let
\[
a:=f_{xxy},\qquad b:=f_{xxx},\qquad c:=f_{xyy},\qquad d:=f_{yyy}.
\]
Then
\begin{equation}\label{diagonal_map}
\begin{aligned}
\Delta(e_1)&=Q=e_1\otimes e_3+e_2\otimes e_2+e_3\otimes e_1,\\
\Delta(e_2)
&=
c\,e_1\otimes e_1
+a\,(e_1\otimes e_2+e_2\otimes e_1)
+b\,e_2\otimes e_2
+e_2\otimes e_3
+e_3\otimes e_2,\\
\Delta(e_3)
&=
d\,e_1\otimes e_1
+c\,(e_1\otimes e_2+e_2\otimes e_1)
+a\,e_2\otimes e_2
+e_3\otimes e_3,
\end{aligned}
\end{equation}
where $e_k=\frac{\partial}{\partial t^k}$, $e=e_1$, and $Q$ is the Casimir element, which gives a solution of the quantum Yang--Baxter equation corresponding to the Frobenius algebra (see Section \ref{Quantum_Yang_Baxter_and_Dubrovin_Frobenius_structures}).

\subsection{QYBE and Dubrovin--Frobenius Structures}\label{Quantum_Yang_Baxter_and_Dubrovin_Frobenius_structures}

In \cite[Theorem 3.4]{Beidar97}, it was shown that for every Frobenius algebra $A$ (not necessarily commutative) over a commutative ring $K$ with dual bases $\{e_i\}_{i=1}^n$ and $\{e^i\}_{i=1}^n$, there is a canonical element
\[
Q=\sum\limits_{i=1}^n e_i\otimes e^i,
\]
called the Casimir element, such that $Q$ satisfies the braid relation
\[
Q_{12}Q_{23}Q_{12}=Q_{23}Q_{12}Q_{23},
\]
where the following standard notation is used:
\[
Q_{12}=\sum_{i=1}^n e_i\otimes e^i\otimes 1,\quad
Q_{23}=\sum_{i=1}^n 1\otimes e_i\otimes e^i,\quad
Q_{13}=\sum_{i=1}^n e_i\otimes 1\otimes e^i.
\]

Let $T: A\otimes A\to A\otimes A$ denote the transposition operator (also called the ``flip'')
\[
T(a\otimes b)=b\otimes a.
\]
Further, let
\[
R=QT\in\End(A\otimes A)
\]
be the ``$R$-matrix''. Then $R$ satisfies the quantum Yang--Baxter equation:
\begin{equation}\label{quantum_Yang-Baxter_equation}
R_{12}R_{13}R_{23}=R_{23}R_{13}R_{12}.
\end{equation}

Let us return to the Dubrovin--Frobenius structure from Theorem \ref{Dubrovin-Frobenius_and_2-valued}. We know that the basis $(e_1,e_2,e_3)$ is dual to the basis $(e_3,e_2,e_1)$ with respect to the metric \eqref{metric_matrix}. Let us write the Casimir element:
\begin{equation}\label{Casimir}
Q=\sum_{i=1}^3 e_i\otimes e^i \;=\; e_1\otimes e_3+e_2\otimes e_2+e_3\otimes e_1.
\end{equation}

Introduce a matrix:
\begin{equation}\label{R_matrix_2}
R =
\begin{pmatrix}
0 & c & d & c & a^{2} & ac & d & ac & c^{2} \\
0 & a & c & a & ab+c & bc+d & c & a^{2} & ac \\
1 & 0 & 0 & 0 & a & c & 0 & c & d \\
0 & a & c & a & ab+c & a^{2} & c & bc+d & ac \\
1 & b & a & b & 2a+b^{2} & ab+c & a & ab+c & a^{2} \\
0 & 1 & 0 & 1 & b & a & 0 & a & c \\
1 & 0 & 0 & 0 & a & c & 0 & c & d \\
0 & 1 & 0 & 1 & b & a & 0 & a & c \\
0 & 0 & 1 & 0 & 1 & 0 & 1 & 0 & 0
\end{pmatrix},
\end{equation}

In the basis
\[
\bigl(e_1\!\otimes\!e_1,\ e_1\!\otimes\!e_2,\ e_1\!\otimes\!e_3,\ e_2\!\otimes\!e_1,\ e_2\!\otimes\!e_2,\ e_2\!\otimes\!e_3,\ e_3\!\otimes\!e_1,\ e_3\!\otimes\!e_2,\ e_3\!\otimes\!e_3\bigr)
\]
the $R$-matrix takes the form \eqref{R_matrix_2}, where
\[
a:=f_{xxy},\qquad b:=f_{xxx},\qquad c:=f_{xyy},\qquad d:=f_{yyy}.
\]

In fact, the following more general statement holds.

\begin{theorem}\label{general_Yang_Baxter_and_associativity_condition}
The matrix \eqref{R_matrix_2} satisfies the Yang--Baxter equation
\[
R_{12}R_{13}R_{23}=R_{23}R_{13}R_{12}
\]
if and only if the following condition holds:
\[
a^2-d-bc=0
\]
\end{theorem}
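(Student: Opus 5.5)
The plan is to identify the matrix \eqref{R_matrix_2} with an operator built from the multiplication table \eqref{mult_table_2}. The ``if'' direction then reduces to the theorem of Beidar et al.\ quoted above together with Proposition \ref{minimal_ideal}. The ``only if'' direction is handled by an explicit evaluation of the Yang--Baxter defect on one well-chosen vector.

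\textbf{Step 1 (interpretation of $R$).} Let $A$ be the commutative, not necessarily associative, algebra over $K=\Z[a,b,c,d]$ with basis $e_1,e_2,e_3$, unit $e_1$, and multiplication table \eqref{mult_table_2}. Let $Q$ be the Casimir element \eqref{Casimir}. Define $R\in\End(A\otimes A)$ by
\[
R(x\otimes y)=\sum_{i=1}^3 e_i y\otimes e^i x,
\]
that is, $R=QT$, where $Q$ acts by componentwise left multiplication. I will check column by column that this operator has the matrix \eqref{R_matrix_2} in the basis $e_i\otimes e_j$. For example, $R(e_1\otimes e_1)=Q$ gives ones exactly in rows $3,5,7$ of the first column. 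The other columns are obtained the same way by expanding the products $e_ie_j$ with \eqref{mult_table_2}. Only products of two basis elements enter, so the identification holds for arbitrary $a,b,c,d$, with no associativity assumed.

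\textbf{Step 2 (sufficiency).} Assume $a^2-d-bc=0$. By Proposition \ref{minimal_ideal}, the algebra $A\otimes_K K/(a^2-d-bc)$ is then associative. The form $\langle x,y\rangle=\theta(xy)$ with $\theta(e_1)=\theta(e_2)=0$ and $\theta(e_3)=1$ has Gram matrix \eqref{metric_matrix}, so it is nondegenerate. By associativity it is automatically invariant, $\langle xy,z\rangle=\langle x,yz\rangle$. Hence $(A,\theta)$ is a Frobenius algebra over a commutative ring, with dual bases $(e_1,e_2,e_3)$ and $(e_3,e_2,e_1)$. The result of \cite[Theorem 3.4]{Beidar97} then says that $R=QT$ satisfies \eqref{quantum_Yang-Baxter_equation}. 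The same argument specializes to any values of $a,b,c,d$ satisfying the relation, so the identity holds there as well.

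\textbf{Step 3 (necessity).} I will show that the defect $D:=R_{12}R_{13}R_{23}-R_{23}R_{13}R_{12}$ has an entry equal to $\pm(a^2-d-bc)$, possibly up to a small integer factor. Then $D=0$ forces the relation. I will compute $D$ on $e_1\otimes e_1\otimes e_1$ using the description from Step 1, namely
\[
R_{12}(x\otimes y\otimes z)=\sum_i e_iy\otimes e^ix\otimes z,
\]
and similarly for $R_{13}$ and $R_{23}$. Both sides become sums of triple tensors whose components are iterated products of basis elements, bracketed in different orders. Their difference is therefore a combination of associators $(e_ie_j)e_k-e_i(e_je_k)$. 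Since $A$ is commutative and $e_1$ is the unit, every associator reduces to a multiple of
\[
(e_2^2)e_3-e_2(e_2e_3)=(a^2-d-bc)\,e_1 .
\]
This last identity follows directly from \eqref{mult_table_2}.

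The main obstacle is this bookkeeping: I must make sure the associator terms do not cancel in every coordinate. If the computation on $e_1\otimes e_1\otimes e_1$ turns out to give zero, I will instead compute $D(e_1\otimes e_1\otimes e_3)$ or $D(e_2\otimes e_1\otimes e_1)$, where an $e_2^2e_3$-type bracketing appears asymmetrically. As a fallback, the whole $27\times27$ difference can be computed symbolically to confirm that it is a nonzero matrix multiple of $a^2-d-bc$.
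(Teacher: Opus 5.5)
Your proposal is correct, and it is organized differently from the paper. The paper's proof is a brute-force computation of the full $27\times 27$ defect $Y=R_{12}R_{13}R_{23}-R_{23}R_{13}R_{12}$, which is observed to factor as $(a^2-d-bc)N$ with some entries of $N$ equal to $1$.

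You instead proceed in three steps:
\begin{itemize}
\item You identify \eqref{R_matrix_2} with $R(x\otimes y)=\sum_i e_iy\otimes e^ix$ for the commutative, possibly nonassociative algebra \eqref{mult_table_2}. All nine columns do match.
\item You get sufficiency conceptually from \cite{Beidar97} over $K/(a^2-d-bc)$. This also shows that every entry of $Y$ lies in the ideal $(a^2-d-bc)$.
\item You then need only one explicit entry for necessity.
\end{itemize}

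Your route explains where the factor $a^2-d-bc$ comes from: it is the associativity obstruction. It also reduces a large symbolic computation to a single coefficient. The price is reliance on the cited theorem and on associativity modulo $a^2-d-bc$. The latter is Proposition \ref{minimal_ideal}, which the paper states without proof, but it is a two-line check. The paper's computation is self-contained and yields the complete factorization, but it gives no explanation.

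A few corrections to Step 3.

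\textbf{Your first test vector fails.} $D(e_1\otimes e_1\otimes e_1)=0$ identically in $a,b,c,d$. To see why, write $L_p$ for multiplication by $p$ and pair slotwise with $u\otimes v\otimes w$ via $\eta$. In general one gets
\[
\eta^{\otimes 3}\bigl(D(x\otimes y\otimes z),u\otimes v\otimes w\bigr)=\tr(L_yL_{xw}L_vL_uL_z)-\tr(L_yL_{zu}L_vL_wL_x).
\]
For $x=y=z=e_1$ this becomes $\tr(L_wL_vL_u)-\tr(L_uL_vL_w)$. It vanishes because every $L_p$ is $\eta$-self-adjoint.

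\textbf{Your fallback does work.} The coefficient of $e_3\otimes e_2\otimes e_1$ in $D(e_2\otimes e_1\otimes e_1)$ is
\[
(4a^2+ab^2+bc)-(3a^2+ab^2+2bc+d)=a^2-d-bc,
\]
which settles necessity.

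\textbf{The associator heuristic is inaccurate.} The two sides are not the same products ``bracketed in different orders.'' They distribute the factors over different tensor slots. Comparing them needs the Casimir identity $\sum_i xe_i\otimes e^i=\sum_i e_i\otimes xe^i$. Without associativity this holds here only because $\eta(xy,z)$ is totally symmetric, being a third derivative of $F$. Since Step 2 already gives divisibility, this point is not load-bearing.

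\textbf{A sign and a miscount.} In fact $(e_2^2)e_3-e_2(e_2e_3)=-(a^2-d-bc)e_1$. The other independent associator is $(e_2e_3)e_3-e_2(e_3^2)=(a^2-d-bc)e_2$. So the associators are $\pm(a^2-d-bc)$ times basis vectors, not multiples of a single element.
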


\begin{proof}
Let
\[
Y:= R_{12}R_{13}R_{23}-R_{23}R_{13}R_{12}.
\]
Then a direct computation shows that
\[
Y=(a^2-d-bc)N,
\]
where $N = N(a,b,c,d)$ is a $(27\times 27)$-matrix over the ring $\Z[a,b,c,d]$ with some entries equal to 1 (hence, $N\not\equiv 0$).
\end{proof}

From the results of Section \ref{Curves_in_moduli_space}, it follows that there are solutions of the Yang--Baxter equation of the form \eqref{R_matrix_2} whose entries admit parametrization by quasimodular forms.

\begin{coroll_theorem}\label{Yang_Baxter_and_associativity_condition}
The associativity condition
\[
4k_8=k_4^2-k_6k_2
\]
in the universal symmetric $2$-algebraic two-valued group is equivalent to the quantum Yang--Baxter equation for the Dubrovin--Frobenius structure on a three-dimensional domain with potential
\[
F(t) = \frac{1}{2}(t^1)^2t^3+\frac{1}{2}t^1(t^2)^2 - \frac{(t^2)^4}{16}\phi(t^3)
\]
and Casimir element \eqref{Casimir}.
\end{coroll_theorem}

Note that Proposition \ref{coalgebra_strucure_in_Frobenius_algebra} implies that for any Frobenius algebra $(A,\theta)$, for the corresponding coalgebra $(A,\Delta,\theta)$ the value of the diagonal $\Delta$ on the unit equals the Casimir element $Q$. In particular, the coalgebra $(A,\Delta,\theta)$ with $\dim A>1$ cannot be a Hopf algebra, since in that case one would have
\[
\Delta(e)=e\otimes e.
\]

\section{Conclusion and Open Problems}\label{conclusion}

The main result of this paper is the proof of equivalence of five conditions unified in
diagram~\eqref{diagram_of_equivalences}: the associativity condition for the universal symmetric
$2$-algebraic 2-valued group, the Chazy~III equation, the horizontality of the Ramanujan
vector field, the associativity of the 3D Dubrovin--Frobenius structure with potential \eqref{the_potential},
and the quantum Yang--Baxter equation for the corresponding Casimir element. All five
equivalences reduce to the single algebraic relation
\[
  4k_8 = k_4^2 - k_6 k_2,
\]
which is realized geometrically through the orbits of the $\mathrm{SL}_2(\mathbb{C})$-action
on the solution space of the Chazy equation.

We emphasize that  diagram~\eqref{diagram_of_equivalences} is established specifically in the 2-valued
case. Below we formulate concrete problems arising naturally from the results obtained, and
indicate in each case what is known and what is missing for their resolution.

%\subsection*{An analogue of the Chazy equation for $n=3$}
%In~\cite{BK25b}, symmetric $3$-algebraic three-valued groups $G_{\mathbb{C}}(P_3)$ are
%classified. The associativity condition for $n=3$ is a system of polynomial equations on the
%coefficients of the polynomial $P_3(z;x,y)$, analogous to relation~\eqref{eq:assoc}. The
%concrete question is: \emph{does this condition, under a suitable parametrisation of the
%coefficients by curves in the moduli space, become equivalent to some autonomous
%third-order ODE system?} Natural candidates are the Chazy~XII equation or the
%Darboux--Halphen system, both of which carry an $\mathrm{SL}_3(\mathbb{C})$-symmetry;
%however, a direct computation in the style of the proof of Theorem~3 for $n=3$ has not
%yet been carried out.

\subsection*{$n$-Valued Analogues for $n>2$}
The first problem is to understand whether the picture developed here admits a genuine analogue for $n$-valued groups with $n>2$. In particular, one may ask whether the associativity equations in the $n=3$ case lead to a distinguished differential system playing a role analogous to that of the Chazy III equation for $n=2$. At present this should be regarded as an open problem rather than a prediction, although the representation-theoretic features of the $n=2$ case suggest that an $\SL_3(\mathbb C)$-equivariant differential system may arise.

\subsection*{Modular and Arithmetic Aspects}
In the 2-valued case, the parameters $k_2(\tau)$, $k_4(\tau)$ and $k_6(\tau)$ are expressed, for the appropriate normalization, through the quasimodular forms $E_2(\tau)$, $E_2'(\tau)$ and $E_2''(\tau)$. This suggests that the parameter space of the theory has a natural modular interpretation. It would be interesting to determine to what extent analogous moduli spaces for higher-valued groups carry modular or locally symmetric structures, and how the degenerate solutions should be interpreted from this point of view.

%\subsection*{Arithmetic properties of the $R$-matrix}
%The matrix $R = R(a,b,c,d)$ from Theorem~8, upon specialization
%\[
%  a = a(\tau),\quad b = b(\tau),\quad c = c(\tau),\quad d = d(\tau)
%\]
%by quasimodular forms, satisfies the QYBE. The concrete problem is: \emph{describe
%which values $\tau\in\mathbb{H}$ yield $R$-matrices with algebraic $($or rational$)$
%entries, and relate this to the theory of complex multiplication for the elliptic
%curves~\eqref{eq:elliptic}.} This is concretely verifiable for CM-points
%$\tau = i,\, e^{2\pi i/3},\ldots$ by explicit computation using the formulae already
%available in the paper.

\subsection*{Quantization of the Structure}
The $R$-matrix~\eqref{R_matrix_2} is a solution of the QYBE over a ring $A=\Z[a,b,c,d]$. The natural question is: does there exist a one-parameter deformation $R_q$ of $R(a,b,c,d)$
satisfying the quantum Yang--Baxter equation
\[
  R_{12}R_{13}R_{23} = R_{23}R_{13}R_{12}
\]
over some non-commutative ring $A_q$ such that such that the specialization at $q=1$ is $A$.

%for $q\neq 1$ and reducing to $R$ as $q\to 1$, while remaining compatible with the
%differential constraints imposed by the Chazy equation?} The principal obstruction is
%that the Chazy equation is a differential, not a $q$-differential, equation, and its
%$q$-analogue is not known.

\subsection*{Spectral and Integrable Aspects}
The appearance of the Chazy equation in several areas of integrable systems suggests that the present results may admit an interpretation in terms of spectral problems. A particularly interesting question is whether the Chazy side of the equivalence diagram can be related more directly to the ODE/IM correspondence or to functional relations for spectral determinants. Establishing such a link would require substantial new work, but it could provide a different perspective on the associativity equations studied here.

\subsection*{Discriminants and Singularity Theory}
The vector fields $\mathsf{u}$, $\mathsf{v}$ and $\mathsf{w}$ on the base of the universal elliptic family are tangent to the discriminant locus $\Delta=0$. This suggests a link between the present construction and the geometry of discriminants. It would therefore be worthwhile to investigate whether the 2-valued group viewpoint can be incorporated into the framework of singularity theory, for example in connection with flat structures on the complement of the discriminant or with Saito-type constructions.

\subsection*{Operadic Formulation}
The notion of an $n$-valued algebraic structure suggests an operadic or PROP-type reformulation. Such a language could help separate the genuinely algebraic content of multi-valued associativity from the special analytic realizations discussed in this paper. It may also clarify whether the associativity constraints admit homotopy-theoretic generalizations.

%To summarize, the results of this paper do not yet provide a complete theory of multivalued algebraic structures. They do, however, show that in the two-valued case the associativity equations, the Chazy III equation, quasimodular geometry, Gauss--Manin connections, and Frobenius-type structures are linked by a common and remarkably rigid differential pattern. We hope that this point of view will serve as a useful starting point for further work.

\subsection*{Higher-dimensional $n$-Valued Groups and Abelian Varieties}

Finally, we consider it important to develop the theory of $n$-valued algebraic groups associated with automorphisms of Jacobians of algebraic curves. Results in this direction will connect the theory of $n$-valued groups with the theory of multidimensional Abelian functions, the method of finite-gap integration, and the theory of discriminants.

%\clearpage
%\addcontentsline{toc}{section}{References}
\bibliographystyle{bib_style}
{\small \bibliography{data}}

\hfill\\

\noindent\textit{Victor Buchstaber}\\
Steklov Mathematical Institute of Russian Academy of Sciences\\
Email: \texttt{buchstab@mi-ras.ru}

\hfill\\

\noindent\textit{Mikhail Kornev}\\
Steklov Mathematical Institute of Russian Academy of Sciences\\
Email: \texttt{mkorneff@mi-ras.ru}

\hfill\\

\noindent\textit{Vladimir Rubtsov}\\
LAREMA, UMR 6093 du CNRS\\
University of Angers\\
Email: \texttt{volodya@univ-angers.fr}

\end{document}